\definecolor{or}{HTML}{F58634}
\newtheorem{theorem}{Theorem}[section]
\newtheorem{corollary}[theorem]{Corollary}
\newtheorem{lemma}[theorem]{Lemma}
\newtheorem{proposition}[theorem]{Proposition}
\theoremstyle{definition}
\newtheorem{example}[theorem]{Example}
\newtheorem{definition}[theorem]{Definition}
\title{Doubly transitive lines I: Higman pairs and roux}
\author{Joseph W.\ Iverson\footnote{Department of Mathematics, Iowa State University, Ames, IA}\qquad Dustin~G.~Mixon\footnote{Department of Mathematics, The Ohio State University, Columbus, OH} \footnote{Translational Data Analytics Institute, The Ohio State University, Columbus, OH}}
\date{}
\begin{document}
\maketitle

\begin{abstract}
We study lines through the origin of finite-dimensional complex vector spaces that enjoy a doubly transitive automorphism group.
In doing so, we make fundamental connections with both discrete geometry and algebraic combinatorics.
In particular, we show that doubly transitive lines are necessarily optimal packings in complex projective space, and we introduce a fruitful generalization of regular abelian distance-regular antipodal covers of the complete graph.
\end{abstract}

\section{Introduction}
\label{sec.intro}

Given a sequence $\mathscr{L}$ of lines through the origin of $\mathbb{C}^d$, we consider all unitary operators that permute these lines, and we refer to such permutations as automorphisms of $\mathscr{L}$.
We are interested in \textbf{doubly transitive lines}, that is, lines that enjoy a doubly transitive automorphism group.
(Recall that a permutation group $G\leq\operatorname{Sym}(X)$ is doubly transitive if for every $x_1,x_2,y_1,y_2\in X$ with $x_1\neq x_2$ and $y_1\neq y_2$, there exists $g\in G$ such that $g\cdot x_1=y_1$ and $g\cdot x_2=y_2$.)
This paper is the first in a series that studies doubly transitive lines.
Our interest is driven by a surprising connection with a fundamental problem in discrete geometry.
Consider the task of packing lines through the origin so that the minimum distance between any two is as large as possible.
Given unit-norm representatives $\varphi_i\in\ell_i$ for $i\in[n]:=\{1,\ldots,n\}$ of lines $\mathscr{L}=\{\ell_i\}_{i\in[n]}$, then the \textbf{coherence} of the sequence $\Phi=\{\varphi_i\}_{i\in[n]}$ is defined by
\[
\mu(\Phi):=
\max_{\substack{i,j\in[n]\\ i\neq j}}|\langle \varphi_i,\varphi_j\rangle|.
\] 
Minimizing the coherence of $\Phi$ corresponds to maximizing the minimum pairwise \emph{chordal distance} of $\mathscr{L}$~\cite{ConwayHS:96}.
Sequences of unit vectors that minimize coherence find applications in compressed sensing~\cite{BandeiraFMW:13}, multiple description coding~\cite{StrohmerH:03}, digital fingerprinting~\cite{MixonQKF:13}, and quantum state tomography~\cite{RenesBSC:04}.
As we will soon see, unit-norm representatives of $n>d$ doubly transitive lines that span $\mathbb{C}^d$ necessarily minimize coherence, and these objects enjoy a fruitful combinatorial generalization.

The remainder of this introduction provides an overview of our approach.
In Subsection~\ref{subsec.preliminaries}, we give preliminaries from both frame theory and association schemes.
Next, Subsection~\ref{subsec.example main results} walks through an explicit motivating example that illustrates the primary objects of interest in this paper before enunciating our main results.
Subsection~\ref{subsec.context} then elaborates on various precursors to these objects and their relationships.
Finally, Subsection~\ref{subsec.outline} outlines the remainder of the paper.

\subsection{Preliminaries}
\label{subsec.preliminaries}

\subsubsection{Frames and equiangular lines}

First, we review the basics of frames and equiangular lines; see~\cite{Waldron:18} for a complete treatment.
In this paper, we are principally concerned with lines that have unit-norm representatives that minimize coherence.
One popular lower bound on the coherence is the \textbf{Welch bound}~\cite{Welch:74}, given by
\begin{equation}
\label{eq.welch bound}
\mu(\Phi)
\geq\sqrt{\frac{n-d}{d(n-1)}}.
\end{equation}
Equality is achieved in the Welch bound precisely when the sequence of vectors form an \textbf{equiangular tight frame (ETF)}~\cite{StrohmerH:03}, meaning there exist $\alpha,\beta\in \mathbb{R}$ such that
\[
\sum_{i\in[n]}\varphi_i\varphi_i^*=\alpha I,
\qquad
\qquad
|\langle \varphi_i,\varphi_j\rangle|^2=\beta
\qquad \forall i,j\in[n],~i\neq j.
\]
Equivalently, the \textbf{Gram matrix} $(\langle \varphi_j,\varphi_i\rangle)_{ij}$ is a scalar multiple of an orthogonal projection matrix whose off-diagonal entries all have the same modulus.

As a parallel pursuit, there is a large literature on linearly dependent equiangular lines.
(Here, the \textbf{span} of a sequence of lines is the smallest subspace containing them, and a sequence of lines is \textbf{linearly dependent} if one lies in the span of the others. Equivalently, $n$ lines are linearly dependent if their span has dimension $d < n$.)
Given a sequence of lines $\{\ell_i\}_{i\in[n]}$, select a unit-norm representative $\varphi_i$ of each line $\ell_i$ and then compute the Gram matrix $\mathcal{G}=(\langle \varphi_j,\varphi_i\rangle)_{ij}$.
We say the lines $\{\ell_i\}_{i\in[n]}$ are \textbf{equiangular} if this Gram matrix enjoys a decomposition of the form
\begin{equation}
\label{eq.gram signature}
\mathcal{G}
=I+\mu\mathcal{S},
\end{equation}
where $\mu>0$ and $\mathcal{S}\in\mathbb{C}^{n\times n}$ is a \textbf{signature matrix}, that is, a self-adjoint matrix with zeros on the diagonal and unit-modulus entries off the diagonal.
Notice that every ETF produces equiangular lines.
Every signature matrix $\mathcal{S}$ is nonzero with zero trace, and so its minimum eigenvalue is negative.
In fact, for every signature matrix $\mathcal{S}$, one may take $\mu=-1/\lambda_\mathrm{min}(\mathcal{S})$ in~\eqref{eq.gram signature} to obtain the unique corresponding Gram matrix of unit-norm representatives of linearly dependent equiangular lines.
This correspondence between equiangular lines and signature matrices was observed by van Lint and Seidel~\cite{vanLintS:66}.

Note that a different choice of unit-norm representatives would lead to a different signature matrix.
In the real case, the off-diagonal entries in the signature matrix are discrete, lying in $\{\pm1\}$ instead of the entire complex unit circle $\mathbb{T}$, and this feature suggests a combinatorial description.
In order to elaborate, we need a few definitions:
We say vector sequences $\{\varphi_i\}_{i\in[n]}$ and $\{\psi_i\}_{i\in[n]}$ are \textbf{switching equivalent} if there exists $Q\in\operatorname{U}(d)$ and $\{\omega_i\}_{i\in[n]}$ in $\mathbb{T}$ such that $\psi_i=\omega_i Q\varphi_i$ for every $i\in[n]$.
Switching equivalence classes of unit-norm representatives of linearly dependent real equiangular lines are in one-to-one correspondence with combinatorial objects known as \textit{two-graphs} \cite{Seidel:76}.
A \textit{regular two-graph} corresponds to equiangular lines that arise from a real equiangular tight frame.

The fundamental problem in this area concerns the maximum number of equiangular lines with parameter $\mu>0$ that can reside in $\mathbb{C}^d$.
For this problem, the \textbf{relative bound}~\cite{vanLintS:66} states that
\[
n
\leq\frac{d(1-\mu^2)}{1-d\mu^2},
\]
provided $\mu<1/\sqrt{d}$.
Furthermore, equality is achieved precisely when the lines are spanned by vectors from an ETF, or equivalently, when the signature matrix of any choice of unit-norm representatives has exactly two eigenvalues.

\subsubsection{Association schemes}
\label{subsubsec.assoc}

Next, we review association schemes; see \cite{BannaiI:84,CecheriniST:08} for a complete treatment.
An \textbf{association scheme} is a sequence $\{A_i\}_{i\in[k]}$ in $\mathbb{C}^{n\times n}$ with entries in $\{0,1\}$ such that
\begin{itemize}
\item[(A1)]
$A_1=I$,
\item[(A2)]
$\sum_{i\in[k]}A_i=J$ (the matrix of all ones), and
\item[(A3)]
$\mathscr{A}:=\operatorname{span}\{A_i\}_{i\in[k]}$ is a $*$-algebra under matrix multiplication.
\end{itemize}
We refer to $\mathscr{A}$ as the \textbf{adjacency algebra} of $\{A_i\}_{i\in[k]}$; this is also known as a \textit{Bose--Mesner algebra}.
We say two association schemes are \textbf{isomorphic} if there exists a permutation matrix $P$ such that conjugating the adjacency matrices from one scheme by $P$ produces the adjacency matrices of the other scheme.
An association scheme is said to be \textbf{commutative} if its adjacency algebra is commutative.
In this case, the spectral theorem affords $\mathscr{A}$ with an alternative orthogonal basis of primitive idempotents, which can be combined to produce every orthogonal projection matrix in $\mathscr{A}$.
As such, if a commutative association scheme's $k$-dimensional adjacency algebra contains the Gram matrix of an ETF, then it can be obtained by searching through all $2^k$ combinations of the primitive idempotents.
This correspondence between association schemes and desirable Gram matrices dates back to Delsarte, Goethals and Seidel~\cite{DelsarteGS:77}, who coined the following phrase:
We say a matrix $M$ \textbf{carries the association scheme} $\{A_i\}_{i\in[k]}$ if $M=\sum_{i\in[k]}c_iA_i$ with $\{c_i\}_{i\in[k]}$ distinct (in words, the $A_i$'s indicate ``level sets'' of $M$).

A scheme is called \textbf{thin} if all of its adjacency matrices are permutation matrices, in which case the scheme is a permutation representation of a group $G$, and its adjacency algebra is isomorphic to the group ring $\mathbb{C}[G]$.
For example, the Cayley representation of the cyclic group $C_n$ produces a commutative association scheme of translation matrices whose adjacency algebra is the set of $n\times n$ circulant matrices.
For any association scheme, the adjacency matrices that are permutation matrices form a group known as the \textbf{thin radical}.

Since we are interested in doubly transitive lines, we expect unit-norm representatives of these lines to have a Gram matrix that exhibits additional algebraic structure.
Given a group $G$ acting transitively on a set $X$, we may consider the $*$-algebra of \textbf{$G$-stable} matrices, that is, matrices $M\in\mathbb{C}^{X\times X}$ satisfying $M_{g\cdot x,g\cdot y}=M_{x,y}$ for every $x,y\in X$ and $g\in G$.
Almost every member of this algebra carries an underlying association scheme, known as a \textit{Schurian scheme}; see the definition below.
To express the scheme's adjacency matrices, fix a point $x_0\in X$ and let $H$ denote the stabilizer of $x_0$ in $G$.
Since $G$ acts transitively on $X$, we may identify $X$ with $G/H$, as $g\cdot x_0$ corresponds to $gH$.
The group $G$ can be partitioned into \textbf{double cosets} in $H\backslash G/H$, defined by
\[
HaH
:=\{hah':h,h'\in H\},
\]
and each double coset can be further partitioned into left cosets.
These double cosets determine the adjacency matrices for the adjacency algebra $\mathscr{A}(G,H)$ of $G$-stable matrices with indices in $G/H$:
\begin{equation}
\label{eq.schurian adjacency matrices}
(A_{HaH})_{xH,yH}
=\left\{\begin{array}{ll}
1&\text{if }y^{-1}x\in HaH;\\
0&\text{otherwise}.
\end{array}\right.
\end{equation}
Any association scheme that arises in this way is called \textbf{Schurian}.
We say $(G,H)$ is a \textbf{Gelfand pair} if the $*$-algebra $\mathscr{A}(G,H)$ is commutative.

Throughout, it will be convenient to exploit other algebras that are isomorphic to $\mathscr{A}(G,H)$.
For example, consider the space
\[
L^2(H\backslash G /H)
:=\{f\colon G\to\mathbb{C}:f(gh)=f(hg)=f(g)\text{ for every }g\in G, h\in H\}
\]
of bi-$H$-invariant functions on $G$.
Equivalently, these are complex-valued functions over $G$ that are constant on double cosets of $H$, namely, members of the span of the indicator functions $\mathbf{1}_{HaH}$ for $a\in G$, where
\[
\mathbf{1}_{S}(g)
:=\left\{\begin{array}{cl}
1&\text{if }g\in S;\\
0&\text{otherwise,}
\end{array}\right.
\qquad
(S\subseteq G).
\]
This vector space is a $*$-algebra with convolution and involution:
\[
(f_1*f_2)(g)
=\sum_{h\in G}f_1(h)f_2(h^{-1}g),
\qquad
f^*(g)=\overline{f(g^{-1})},
\qquad
(g\in G).
\]
Furthermore, the mapping $\phi\colon\mathscr{A}(G,H)\to L^2(H\backslash G/H)$ defined by $(\phi(M))(g)=\frac{1}{|H|}M_{gH,H}$ is a $*$-algebra anti-isomorphism; here, ``anti'' indicates that the mapping switches the order of multiplication: $\phi(AB)=\phi(B)*\phi(A)$.
Next, $L^2(H\backslash G/H)$ embeds into the group ring by $\theta\colon L^2(H\backslash G/H)\to\mathbb{C}[G]$ defined by $\theta(f)=\sum_{g\in G}f(g)g$.
(Here and throughout, we identify $G$ and $\mathbb{C}$ with their images in the group ring $\mathbb{C}[G]$; in cases where this invites confusion, notably, when $G\subseteq\mathbb{C}$, we instead use the notation $\delta_g\in\mathbb{C}[G]$ for the image of $g\in G$, though we note that the notation $\underline{g}$ is also common.)
The range of $\theta$ is
\[
\mathbb{C}[H\backslash G /H]
:=\bigg\{\sum_{S\in H\backslash G /H}\sum_{g\in S}c_{S}g:c_{S}\in\mathbb{C}\text{ for every }S\in H\backslash G /H\bigg\}.
\]
In particular, $\mathbb{C}[H\backslash G /H]$ is also a $*$-algebra with the usual group ring multiplication and with involution defined by
\[
\bigg(\sum_{S\in H\backslash G /H}\sum_{g\in S}c_{S}g\bigg)^*
=\sum_{S\in H\backslash G /H}\sum_{g\in S}\overline{c_{S}}g^{-1}.
\]
As such, $\theta$ is a $*$-algebra isomorphism.
To summarize, we have two $*$-algebra (anti-) isomorphisms available for our use:
\begin{equation}
\label{eq.isomorphisms}
\mathscr{A}(G,H)
\quad
\stackrel{\phi}{\longrightarrow}
\quad
L^2(H\backslash G/H)
\quad
\stackrel{\theta}{\longrightarrow}
\quad
\mathbb{C}[H\backslash G/H].
\end{equation}

\subsection{Motivating example and main results}
\label{subsec.example main results}

We start with an example of four lines in $\mathbb{C}^2$.
Recall that lines in $\mathbb{C}^2$ correspond to points in complex projective space $\mathbb{C}\mathbf{P}^1$, which as we show below is isometric to the unit sphere $S^2$ (this correspondence is known as the Bloch sphere~\cite{NielsenC:00} in quantum mechanics).
As such, we expect symmetric collections of lines through the origin to correspond to symmetric collections of points in the sphere.
Since we want four lines in $\mathbb{C}^2$, we are naturally drawn to the vertices of a regular tetrahedron circumscribed by $S^2$.
In fact, these lines are doubly transitive:
The action of $\operatorname{U}(2)$ on $\mathbb{C}\mathbf{P}^1$ corresponds to $\operatorname{SO}(3)$ acting on $S^2$, and it is easy to convince oneself that $\operatorname{SO}(3)$ acts doubly transitively on these vertices (especially with the help of a four-sided die).
Explicitly, the isometry we are leveraging is induced by $f\colon \varphi\mapsto \sqrt{2}(\varphi\varphi^*-\frac{1}{2}I)$, which maps unit vectors in $\mathbb{C}^2$ into the $3$-dimensional real vector space of $2\times 2$ self-adjoint matrices with zero trace. (Indeed, each fibre of $f$ consists of all unit-norm representatives of a common line, and modding out by this equivalence produces an isometry $\mathbb{C} \mathbf{P}^1 \cong S^2$.)
This mapping interacts nicely with inner products:
\[
\langle f(\varphi),f(\psi)\rangle_\mathrm{HS}
=2|\langle \varphi,\psi\rangle|^2-1.
\]
(Here, $\langle A,B\rangle_\mathrm{HS}:=\operatorname{tr}(AB^*)$ denotes the Hilbert--Schmidt inner product.)
Consider unit-norm representatives of our doubly transitive lines, that is, $\{\varphi_i\}_{i\in[4]}$ in $\mathbb{C}^2$ so that $\{f(\varphi_i)\}_{i\in[4]}$ form the vertices of a regular tetrahedron.
We can use the mapping $f$ to show that $\{\varphi_i\}_{i\in[4]}$ forms an equiangular tight frame for $\mathbb{C}^2$.
First, the vertices sum to zero, and so
\[
\sum_{i\in[4]}\varphi_i\varphi_i^*
=\sum_{i\in[4]}\bigg(\frac{1}{\sqrt{2}}f(\varphi_i)+\frac{1}{2}I\bigg)
=2I.
\]
Next, when $i\neq j$, we have $\langle f(\varphi_i),f(\varphi_j)\rangle_\mathrm{HS}=-1/3$ and so
\[
|\langle\varphi_i,\varphi_j\rangle|^2
=\frac{1}{2}\Big(\langle f(\varphi_i),f(\varphi_j)\rangle_\mathrm{HS}+1\Big)
=\frac{1}{3}.
\]
The fact that an ETF arose from highly symmetric lines is no coincidence (see also~\cite{Creignou:07}):

\begin{lemma}
\label{lem.2tranETF}
Given $n$ doubly transitive lines with span $\mathbb{C}^d$, select unit-norm representatives $\{\varphi_i\}_{i\in[n]}$.
\begin{itemize}
\item[(a)]
There exists $\beta$ such that $|\langle \varphi_i,\varphi_j\rangle|^2=\beta$ for every $i,j\in[n]$ with $i\neq j$.
\item[(b)]
If $n>d$, then there exists $\alpha$ such that $\sum_{i\in[n]}\varphi_i\varphi_i^*=\alpha I$.
\end{itemize}
\end{lemma}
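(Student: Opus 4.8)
Part (a) follows from unitarity alone. Each automorphism $U$ sends the line $\mathbb{C}\varphi_i$ to some line $\mathbb{C}\varphi_{\sigma_U(i)}$, so $U\varphi_i=\omega_i(U)\,\varphi_{\sigma_U(i)}$ for a unit scalar $\omega_i(U)$, and $U\mapsto\sigma_U$ is a homomorphism whose image is doubly transitive on $[n]$. Since $U$ is unitary, $|\langle\varphi_{\sigma_U(i)},\varphi_{\sigma_U(j)}\rangle|=|\langle U\varphi_i,U\varphi_j\rangle|=|\langle\varphi_i,\varphi_j\rangle|$, and double transitivity moves any ordered pair of distinct indices to any other, so $|\langle\varphi_i,\varphi_j\rangle|$ is the same for all $i\neq j$; call its square $\beta$.

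For part (b) I would prove, by strong induction on $d$, that the frame operator $S:=\sum_{i\in[n]}\varphi_i\varphi_i^*$ equals $\tfrac nd I$ whenever $n>d$ (the case $d=1$ being immediate). Because the automorphisms permute the projections $\varphi_i\varphi_i^*$, each satisfies $USU^*=S$, so $S$ commutes with every automorphism and every eigenspace of $S$ is automorphism-invariant. Suppose $S$ is not scalar, and let $V$ be the eigenspace of its largest eigenvalue, with orthogonal projection $P$; then $V$ is a proper nonzero invariant subspace, $P$ commutes with $S$, and $P$ commutes with every automorphism. Using $U\varphi_i=\omega_i(U)\varphi_{\sigma_U(i)}$ and this last fact, one checks that $c_1:=\|P\varphi_i\|^2$ and $c_2:=\|(I-P)\varphi_i\|^2$ are independent of $i$, with $c_1,c_2>0$ and $c_1+c_2=1$; that $\{P\varphi_i\}$ spans $V$ and $\{(I-P)\varphi_i\}$ spans $V^\perp$; and that in each case the automorphisms induce the same doubly transitive permutations $\sigma_U$ on the $n$ resulting lines. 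Since $\dim V$ and $\dim V^\perp$ are positive and sum to $d<n$, the inductive hypothesis applies: after rescaling to unit norm, $\{P\varphi_i\}$ is an equiangular tight frame for $V$ and $\{(I-P)\varphi_i\}$ is one for $V^\perp$. In particular $S$ acts as a scalar on $V^\perp$, so $S$ has exactly two eigenvalues; writing $d_1=\dim V$, $d_2=\dim V^\perp$ we get $S=\tfrac{nc_1}{d_1}P+\tfrac{nc_2}{d_2}(I-P)$, and since an equiangular tight frame meets the Welch bound, $|\langle P\varphi_i,P\varphi_j\rangle|=c_1\sqrt{\tfrac{n-d_1}{d_1(n-1)}}=:A$ and $|\langle(I-P)\varphi_i,(I-P)\varphi_j\rangle|=c_2\sqrt{\tfrac{n-d_2}{d_2(n-1)}}=:B$ for $i\neq j$.

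The contradiction comes from evaluating $\beta=|\langle\varphi_i,\varphi_j\rangle|^2$ in two ways. Since $V\perp V^\perp$, with $a_{ij}:=\langle P\varphi_i,P\varphi_j\rangle$ and $b_{ij}:=\langle(I-P)\varphi_i,(I-P)\varphi_j\rangle$ we have $\langle\varphi_i,\varphi_j\rangle=a_{ij}+b_{ij}$. Because $P$ commutes with the automorphisms, $a_{ij}\overline{b_{ij}}$ is invariant under the automorphism action on ordered pairs, hence (double transitivity) constant over all $i\neq j$; swapping $i$ and $j$ replaces it by its conjugate, so it is real, equal to $\pm AB$ with a fixed sign. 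Thus $\beta=A^2+B^2\pm2AB$. On the other hand $\operatorname{tr}(S^2)=\sum_{i,j}|\langle\varphi_i,\varphi_j\rangle|^2=n+n(n-1)\beta$, while $\operatorname{tr}(S^2)$ also equals $\tfrac{n^2c_1^2}{d_1}+\tfrac{n^2c_2^2}{d_2}$; combining these with $c_1+c_2=1$ gives $A^2+B^2=\beta+\tfrac{2c_1c_2}{n-1}$, which forces the sign above to be negative and $AB=\tfrac{c_1c_2}{n-1}$. Since $AB=\tfrac{c_1c_2}{n-1}\sqrt{\tfrac{(n-d_1)(n-d_2)}{d_1d_2}}$, this says $(n-d_1)(n-d_2)=d_1d_2$; but $(n-d_1)(n-d_2)-d_1d_2=n(n-d)>0$, a contradiction. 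Hence $S$ is scalar, and taking the trace gives $S=\tfrac nd I$.

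I expect the main obstacle to be the inductive setup: one must check carefully that passing to $V$ and $V^\perp$ really produces doubly transitive lines eligible for the inductive hypothesis, and then keep the two-angle and trace bookkeeping precise enough that the ambiguous sign $\pm AB$ is pinned down and the final inequality $n(n-d)>0$ is exactly the point where the hypothesis $n>d$ is consumed. The phase-cocycle steps — constancy of $\|P\varphi_i\|$, and the invariance and reality of $a_{ij}\overline{b_{ij}}$ — are routine but require some care with the scalars $\omega_i(U)$.
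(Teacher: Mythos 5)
Your proof of part (a) is the same as the paper's: push a pair of indices to any other pair with an automorphism and use unitarity. For part (b), your argument is correct but follows a genuinely different route. The paper works with the Gram matrix $\mathcal{G}$ rather than the frame operator: it observes that double transitivity forces every off-diagonal entry of $\mathcal{G}^2$ to be the \emph{same} unimodular multiple $\overline{\omega_a}\omega_b$ of the corresponding entry of $\mathcal{G}$ as for $\mathcal{G}$ itself, while equiangularity makes the diagonal of $\mathcal{G}^2$ constant; hence $\mathcal{G}^2=c_1\mathcal{G}+c_2I$, and the hypothesis $n>d$ enters only through rank-deficiency of $\mathcal{G}$, which puts $0$ in the spectrum and kills $c_2$. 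That quadratic identity is also the template for the later roux computations (Lemma~\ref{lem.B squared}, Theorem~\ref{thm.roux primitive idempotents}), so the paper's proof doubles as foreshadowing. Your induction on $d$ via the spectral decomposition of $S$, with the Welch-bound/trace bookkeeping pinning the sign of $a_{ij}\overline{b_{ij}}$ and the final identity $(n-d_1)(n-d_2)-d_1d_2=n(n-d)>0$, checks out (I verified the trace identity $A^2+B^2=\beta+\tfrac{2c_1c_2}{n-1}$ and the resulting contradiction), and it has the virtue of locating exactly where $n>d$ is consumed; the cost is the extra machinery and one technical wrinkle you rightly flag. Namely, the projected vectors $P\varphi_i$ may fail to span $n$ \emph{distinct} lines, so the inductive hypothesis as literally stated does not apply; but by double transitivity either all the projected lines are distinct or they all coincide, and in the latter case $\dim V=1$ and both the tightness of $\{P\varphi_i\}$ and the value $A=c_1$ hold trivially, so the induction should really be phrased for $n$ vectors whose (possibly repeated) lines are permuted doubly transitively. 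With that rephrasing the argument is complete.
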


We will prove this lemma shortly.
First, we note that part (a) does not require $n$ to be finite, and in fact, part (a) implies that $n$ is finite; indeed, Gerzon's bound~\cite{LemmensS:73} gives that $n$ lines are equiangular only if $n\leq d^2$.
For part (b), the requirement $n>d$ is important:
If $n=d$, then given an orthonormal basis $\{e_i\}_{i\in[d]}$ for $\mathbb{C}^d$, define $s=\sum_{j\in[d]}e_j$ and $\varphi_i=e_i+s$ for every $i\in[n]$; the lines spanned by $\{\varphi_i\}_{i\in[n]}$ are doubly transitive (in fact, the automorphism group is all of $S_n$), but
\[
\sum_{i\in[n]}\varphi_i\varphi_i^*
=\sum_{i\in[n]}(e_i+s)(e_i+s)^*
=\sum_{i\in[n]}e_ie_i^*+3ss^*
=I+3ss^*,
\]
which has two distinct eigenvalues, unlike $\alpha I$.
Overall, we have that doubly transitive lines with $n>d$ necessarily produce ETFs.
(Recall that the previous example had $n=4>2=d$.)

For the proof of Lemma~\ref{lem.2tranETF}, it is convenient to pass the notion of double transitivity to the unit-norm representatives.
To this end, we consider the \textbf{projective symmetry group} of $\{\varphi_i\}_{i\in[n]}$, defined to be the group of permutations $\sigma\in S_n$ for which there exist $Q\in \operatorname{U}(d)$ and phases $\{\omega_i\}_{i\in[n]}$ such that $Q\varphi_i=\omega_i\varphi_{\sigma(i)}$ for every $i\in[n]$.
The automorphism group of a sequence of lines is identical to the projective symmetry group of any choice of unit-norm representatives.

\begin{proof}[Proof of Lemma~\ref{lem.2tranETF}]
For (a), take $a,b,a',b'\in[n]$ with $a\neq b$ and $a'\neq b'$.
Then by double transitivity, there exists $\sigma$ in the projective symmetry group of $\{\varphi_i\}_{i\in[n]}$ that maps $a\mapsto a'$ and $b\mapsto b'$.
Letting $Q$ and $\{\omega_i\}_{i\in[n]}$ denote the corresponding unitary and phases, this in turn implies
\[
|\langle \varphi_a,\varphi_b\rangle|^2
=|\langle Q\varphi_a,Q\varphi_b\rangle|^2
=|\langle \omega_a\varphi_{a'},\omega_b\varphi_{b'}\rangle|^2
=|\langle \varphi_{a'},\varphi_{b'}\rangle|^2.
\]
Since our choice for $a,b,a',b'\in[n]$ was arbitrary, we may conclude equiangularity.

For (b), let $\mathcal{G}$ denote the Gram matrix of $\{\varphi_i\}_{i\in[n]}$, whose $(i,j)$th entry is given by $\langle \varphi_j,\varphi_i\rangle$.
Then, borrowing notation from the proof of (a), we have
\[
\mathcal{G}_{ab}
=\langle\varphi_b,\varphi_a\rangle
=\langle Q\varphi_b,Q\varphi_a\rangle
=\langle\omega_b\varphi_{b'},\omega_a\varphi_{a'}\rangle
=\overline{\omega_a}\omega_b \langle\varphi_{b'},\varphi_{a'}\rangle
=\overline{\omega_a}\omega_b \mathcal{G}_{a'b'},
\]
and furthermore,
\begin{align*}
(\mathcal{G}^2)_{ab}
&=\sum_{i\in[n]} \langle \varphi_i,\varphi_a\rangle\langle \varphi_b,\varphi_i\rangle
=\sum_{i\in[n]} \langle Q\varphi_i,Q\varphi_a\rangle\langle Q\varphi_b,Q\varphi_i\rangle\\
&=\sum_{i\in[n]} \langle \omega_i\varphi_{\sigma(i)},\omega_a\varphi_{a'}\rangle\langle \omega_b\varphi_{b'},\omega_i\varphi_{\sigma(i)}\rangle
=\overline{\omega_a}\omega_b\sum_{i\in[n]}\langle \varphi_{\sigma(i)},\varphi_{a'}\rangle\langle \varphi_{b'},\varphi_{\sigma(i)}\rangle
=\overline{\omega_a}\omega_b(\mathcal{G}^2)_{a'b'}.
\end{align*}
As such, the off-diagonal of $\mathcal{G}^2$ is a constant multiple of the off-diagonal of $\mathcal{G}$.
Moreover,
\[
(\mathcal{G}^2)_{aa}
=\sum_{i\in[n]}|\langle \varphi_i,\varphi_a\rangle|^2
=1+(n-1)\beta
=\big(1+(n-1)\beta\big)\mathcal{G}_{aa}.
\]
Overall, $\mathcal{G}^2=c_1\mathcal{G}+c_2I$ for some $c_1,c_2\in\mathbb{C}$, and so every eigenvalue $\lambda$ of $\mathcal{G}$ satisfies $\lambda^2=c_1\lambda+c_2$.
Since $n>d$ by assumption, $\mathcal{G}$ is rank-deficient, meaning $\lambda=0$ is an eigenvalue of $\mathcal{G}$, and so $c_2=0$.
As such, $\mathcal{G}$ is a scalar multiple of an orthogonal projection matrix, which gives the result.
\end{proof}

The above proof exploits how ETFs are easily characterized in terms of the Gram matrix, i.e., it is equivalent for the Gram matrix to be a scalar multiple of an orthogonal projection matrix whose off-diagonal entries all have the same modulus.
This characterization interacts nicely with the theory of association schemes, especially those arising from Gelfand pairs~\cite{IversonJM:16,IversonJM:17}.
In the study of doubly transitive lines, a particular type of Gelfand pair is especially relevant.
We name the following object after a pair of mathematicians, namely, Graham Higman and Donald G.\ Higman, who are known for their contributions to the theory of groups, two-graphs, and association schemes~\cite{Collins:08,BannaiGPS:09}.

\begin{definition}
Given a finite group $G$ and a proper subgroup $H\leq G$, let $K=N_G(H)$ be the normalizer of $H$ in $G$.
We say $(G,H)$ is a \textbf{Higman pair} if there exists a \textbf{key} $b\in G\setminus K$ such that
\begin{itemize}
\item[(H1)]
$G$ acts doubly transitively on $G/K$,
\item[(H2)]
$K/H$ is abelian,
\item[(H3)]
$HbH=Hb^{-1}H$,
\item[(H4)]
$aba^{-1}\in HbH$ for every $a\in K$, and
\item[(H5)]
$a \in K$ satisfies $ab \in HbH$ only if $a \in H$.
\end{itemize}
\end{definition}

As an example, consider the isomorphism $\beta\colon\mathbb{F}_3^*\to C_2$ and take
\[
G:=\operatorname{SL}(2,3)\times C_4,
\qquad
H:=\{([\begin{smallmatrix}x&y\\0&x\end{smallmatrix}],\beta(x)):x,y\in\mathbb{F}_3,x\neq0\}.
\]
It turns out that $(G,H)$ is a Higman pair with
\[
K=\{([\begin{smallmatrix}x&y\\0&x\end{smallmatrix}],z):x,y\in\mathbb{F}_3,x\neq0,z\in C_4\},
\qquad
b=([\begin{smallmatrix}0&1\\-1&0\end{smallmatrix}],\mathrm{i}).
\]
(Here and throughout, we view $C_r$ as the subgroup of $\mathbb{C}$ comprised of $r$th roots of unity, and we denote $\mathrm{i}=\sqrt{-1}$.)
We will only verify (H1) here, as the proofs of $K=N_G(H)$ and (H2)--(H5) are short and unenlightening.
Since $\operatorname{SL}(2,3)$ permutes the set $X$ of one-dimensional subspaces of $\mathbb{F}_3^2$, we may let $G$ act on $X$ by setting $(g,z)\cdot x = g\cdot x$.
Then since $\operatorname{SL}(2,3)$ acts doubly transitively on $X$, $G$ does, as well.
Now observe that $K$ is the stabilizer of the line through $[1,0]^\top$, meaning the action of $G$ on $G/K$ is equivalent to that on $X$.
This gives (H1).

With the help of GAP~\cite{GAP:software,Hanaki:online}, one can show that the algebra $\mathscr{A}(G,H)$ has a basis of eight $16\times 16$ adjacency matrices: four of the form $D^j$ and four of the form $D^jA$, where
\[
D
=\left[\begin{array}{cccc}
T&\cdot&\cdot&\cdot\\
\cdot&T&\cdot&\cdot\\
\cdot&\cdot&T&\cdot\\
\cdot&\cdot&\cdot&T
\end{array}\right],
\qquad
A
=\left[\begin{array}{llll}
\phantom{|}\cdot&T^{\phantom{-1}} &T^{\phantom{-1}} &T^{\phantom{-1}}\\
T^{-1}&\phantom{|}\cdot&T^{\phantom{-1}}&T^{-1}\\
T^{-1}&T^{-1}&\phantom{|}\cdot&T^{\phantom{-1}}\\
T^{-1}&T^{\phantom{-1}}&T^{-1}&\phantom{|}\cdot
\end{array}\right],
\qquad
T
=\left[\begin{array}{cccc}
\cdot&\cdot&\cdot&1\\
1&\cdot&\cdot&\cdot\\
\cdot&1&\cdot&\cdot\\
\cdot&\cdot&1&\cdot
\end{array}\right].
\]
Here, dots denote zeros and $T$ is the Cayley representation of $\mathrm{i}\in C_4$.
It is straightforward to verify that $(G,H)$ is a Gelfand pair, and so $\mathscr{A}(G,H)$ contains eight primitive idempotents.
While this determines $2^8=256$ different orthogonal projection matrices, it turns out that in this case, the primitive idempotents already yield interesting Gram matrices.
Of these, two have rank $1$, four have rank $2$, and the remaining two have rank $3$.
One of the rank-$2$ idempotents is given below:
\[
\begingroup 
\setlength\arraycolsep{2pt}
P
=\frac{1}{8\sqrt{3}}
\tiny{
\left[\begin{array}{rrrr|rrrr|rrrr|rrrr}
\cellcolor{lightgray} \sqrt{3}&-\sqrt{3}\mathrm{i}&-\sqrt{3}&\sqrt{3}\mathrm{i}&\cellcolor{lightgray} \mathrm{i}&1&-\mathrm{i}&-1&\cellcolor{lightgray} \mathrm{i}&1&-\mathrm{i}&-1&\cellcolor{lightgray} \mathrm{i}&1&-\mathrm{i}&-1\\
\sqrt{3}\mathrm{i}&\sqrt{3}&-\sqrt{3}\mathrm{i}&-\sqrt{3}&-1&\mathrm{i}&1&-\mathrm{i}&-1&\mathrm{i}&1&-\mathrm{i}&-1&\mathrm{i}&1&-\mathrm{i}\\
-\sqrt{3}&\sqrt{3}\mathrm{i}&\sqrt{3}&-\sqrt{3}\mathrm{i}&-\mathrm{i}&-1&\mathrm{i}&1&-\mathrm{i}&-1&\mathrm{i}&1&-\mathrm{i}&-1&\mathrm{i}&1\\
-\sqrt{3}\mathrm{i}&-\sqrt{3}&\sqrt{3}\mathrm{i}&\sqrt{3}&1&-\mathrm{i}&-1&\mathrm{i}&1&-\mathrm{i}&-1&\mathrm{i}&1&-\mathrm{i}&-1&\mathrm{i}\\\hline
\cellcolor{lightgray}-\mathrm{i}&-1&\mathrm{i}&1&\cellcolor{lightgray} \sqrt{3}&-\sqrt{3}\mathrm{i}&-\sqrt{3}&\sqrt{3}\mathrm{i}&\cellcolor{lightgray} \mathrm{i}&1&-\mathrm{i}&-1&\cellcolor{lightgray} -\mathrm{i}&-1&\mathrm{i}&1\\
1&-\mathrm{i}&-1&\mathrm{i}&\sqrt{3}\mathrm{i}&\sqrt{3}&-\sqrt{3}\mathrm{i}&-\sqrt{3}&-1&\mathrm{i}&1&-\mathrm{i}&1&-\mathrm{i}&-1&\mathrm{i}\\
\mathrm{i}&1&-\mathrm{i}&-1&-\sqrt{3}&\sqrt{3}\mathrm{i}&\sqrt{3}&-\sqrt{3}\mathrm{i}&-\mathrm{i}&-1&\mathrm{i}&1&\mathrm{i}&1&-\mathrm{i}&-1\\
-1&\mathrm{i}&1&-\mathrm{i}&-\sqrt{3}\mathrm{i}&-\sqrt{3}&\sqrt{3}\mathrm{i}&\sqrt{3}&1&-\mathrm{i}&-1&\mathrm{i}&-1&\mathrm{i}&1&-\mathrm{i}\\\hline
\cellcolor{lightgray} -\mathrm{i}&-1&\mathrm{i}&1&\cellcolor{lightgray} -\mathrm{i}&-1&\mathrm{i}&1&\cellcolor{lightgray} \sqrt{3}&-\sqrt{3}\mathrm{i}&-\sqrt{3}&\sqrt{3}\mathrm{i}&\cellcolor{lightgray} \mathrm{i}&1&-\mathrm{i}&-1\\
1&-\mathrm{i}&-1&\mathrm{i}&1&-\mathrm{i}&-1&\mathrm{i}&\sqrt{3}\mathrm{i}&\sqrt{3}&-\sqrt{3}\mathrm{i}&-\sqrt{3}&-1&\mathrm{i}&1&-\mathrm{i}\\
\mathrm{i}&1&-\mathrm{i}&-1&\mathrm{i}&1&-\mathrm{i}&-1&-\sqrt{3}&\sqrt{3}\mathrm{i}&\sqrt{3}&-\sqrt{3}\mathrm{i}&-\mathrm{i}&-1&\mathrm{i}&1\\
-1&\mathrm{i}&1&-\mathrm{i}&-1&\mathrm{i}&1&-\mathrm{i}&-\sqrt{3}\mathrm{i}&-\sqrt{3}&\sqrt{3}\mathrm{i}&\sqrt{3}&1&-\mathrm{i}&-1&\mathrm{i}\\\hline
\cellcolor{lightgray} -\mathrm{i}&-1&\mathrm{i}&1&\cellcolor{lightgray} \mathrm{i}&1&-\mathrm{i}&-1&\cellcolor{lightgray} -\mathrm{i}&-1&\mathrm{i}&1&\cellcolor{lightgray} \sqrt{3}&-\sqrt{3}\mathrm{i}&-\sqrt{3}&\sqrt{3}\mathrm{i}\\
1&-\mathrm{i}&-1&\mathrm{i}&-1&\mathrm{i}&1&-\mathrm{i}&1&-\mathrm{i}&-1&\mathrm{i}&\sqrt{3}\mathrm{i}&\sqrt{3}&-\sqrt{3}\mathrm{i}&-\sqrt{3}\\
\mathrm{i}&1&-\mathrm{i}&-1&-\mathrm{i}&-1&\mathrm{i}&1&\mathrm{i}&1&-\mathrm{i}&-1&-\sqrt{3}&\sqrt{3}\mathrm{i}&\sqrt{3}&-\sqrt{3}\mathrm{i}\\
-1&\mathrm{i}&1&-\mathrm{i}&1&-\mathrm{i}&-1&\mathrm{i}&-1&\mathrm{i}&1&-\mathrm{i}&-\sqrt{3}\mathrm{i}&-\sqrt{3}&\sqrt{3}\mathrm{i}&\sqrt{3}\\
\end{array}\right]
}.
\endgroup
\]
(We will make use of the shaded entries later.)
Multiplying $P$ by $8$ gives the Gram matrix of four unit-norm representatives from each of four lines, and furthermore, any vectors $\varphi_i$ and $\varphi_j$ representing different lines satisfy $|\langle \varphi_i,\varphi_j\rangle|^2=1/3$.
Indeed, selecting a single representative from each line produces an ETF of four vectors in $\mathbb{C}^2$, i.e., the ETF discussed at the beginning of this subsection.
In particular, these lines are doubly transitive.
This correspondence between Higman pairs and doubly transitive lines occurs in general:

\begin{theorem}[Higman Pair Theorem]\
\label{thm.Higman Pair Theorem}
\begin{itemize}
\item[${}^*$(a)]
Assume $n\geq 3$.
Given $n>d$ doubly transitive lines that span $\mathbb{C}^d$, there exists $r$ such that one may select $r$ equal-norm representatives from each of the $n$ lines whose Gram matrix carries the association scheme of a Higman pair $(G,H)$ with $r=[N_G(H):H]$ and $n=[G:N_G(H)]$.
Moreover, their Gram matrix is a primitive idempotent for this scheme.
\item[(b)]
Every Higman pair $(G,H)$ is a Gelfand pair.
Every primitive idempotent of its association scheme is the Gram matrix of $r=[N_G(H):H]$ equal-norm representatives from each of $n=[G:N_G(H)]$ doubly transitive lines that span $\mathbb{C}^d$ with $d<n$, and the phase of each entry is an $r$th root of unity.
Moreover, the automorphism group of the lines contains the doubly transitive action of $G$ on $G/N_G(H)$.
\end{itemize}
\end{theorem}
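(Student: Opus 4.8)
The plan is to prove the two directions of the Higman Pair Theorem by exploiting the three isomorphic algebras in~\eqref{eq.isomorphisms}, passing freely between $G$-stable matrices, bi-$H$-invariant functions, and the double-coset algebra $\mathbb{C}[H\backslash G/H]$. For part~(a), start with $n>d$ doubly transitive lines spanning $\mathbb{C}^d$; by Lemma~\ref{lem.2tranETF} their unit-norm representatives form an ETF. Let $G$ be the automorphism group (equivalently the projective symmetry group of the representatives), acting doubly transitively on the $n$ lines. The stabilizer $K$ of a line $\ell_1$ acts on that line by phases, giving a homomorphism $K\to\mathbb{T}$ whose kernel $H$ is a normal subgroup of $K$ with $K/H$ abelian; this is where (H2) comes from. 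One must check $K=N_G(H)$: the inclusion $K\le N_G(H)$ is automatic, and for the reverse one uses double transitivity together with the fact that $H$ fixes a unique line (forcing any normalizing element to fix that line, hence lie in $K$). Now $G$ acts on $G/H$, and the orbits of $H$ on the lines, lifted appropriately, give $r=[K:H]$ representatives per line; the Schurian scheme $\mathscr{A}(G,H)$ is carried by the Gram matrix $\mathcal{G}$ of these $rn$ vectors because $\mathcal{G}$ is $G$-stable (the entries depend only on the double coset $Hy^{-1}xH$, since applying a symmetry permutes the vectors up to a global phase and the moduli---and, after the normalization, the exact phase ratios---are determined by the double coset). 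The key element $b$ is chosen to be any element of $G\setminus K$ that realizes the off-diagonal inner product: since the lines are equiangular with $n\ge 3$, there is a single ``non-identity'' block structure, and properties (H3)--(H5) are exactly the translations, via~\eqref{eq.isomorphisms}, of the facts that $\mathcal{G}$ is self-adjoint ((H3), $HbH=Hb^{-1}H$), that $\mathcal{G}$ is $K$-stable and the phase ratios within a single off-diagonal block are governed by the abelian group $K/H$ ((H4)), and that distinct cosets of $H$ inside $K$ give genuinely distinct rows ((H5), the ``faithfulness'' of the phase labeling). Finally, $\mathcal{G}$ being a scalar multiple of a projection shows it is (a multiple of) a primitive idempotent of the commutative scheme.

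For part~(b), reverse the construction. Given a Higman pair $(G,H)$ with key $b$, first show $(G,H)$ is a Gelfand pair: because $G$ acts doubly transitively on $G/K$ and $K/H$ is abelian, the permutation character of $G$ on $G/H$ decomposes multiplicity-freely. Concretely, $\mathbb{C}[G/H]\cong\bigoplus_{\chi\in\widehat{K/H}}\operatorname{Ind}_K^G(\chi)$, and double transitivity on $G/K$ forces $\operatorname{Ind}_K^G(\mathbf{1})=\mathbf{1}\oplus\pi$ with $\pi$ irreducible; one then argues, using (H3)--(H5), that each $\operatorname{Ind}_K^G(\chi)$ is multiplicity-free and that the constituents for distinct $\chi$ are disjoint, so $\mathscr{A}(G,H)$ is commutative. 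Then take any primitive idempotent $E$ of $\mathscr{A}(G,H)$; it lies in the algebra of $G$-stable matrices on $G/H$, so its entries are constant on double cosets of $H$, and by (H2)--(H5) the off-diagonal entries all have the same modulus with phases that are $r$th roots of unity (the phases are exactly the values of a character of $K/H$, which has order dividing $r=[K:H]$). Since $E$ is a projection with all diagonal entries equal (by $G$-invariance and vertex-transitivity of $G$ on $G/H$) and constant off-diagonal modulus, $E$ (suitably scaled) is the Gram matrix of an ETF of $rn$ vectors; grouping the $rn=|G/H|$ indices into the $n$ fibers of $G/H\to G/K$ gives $r$ equal-norm vectors spanning each of $n$ lines, and these lines inherit the doubly transitive $G$-action on $G/K$ as automorphisms (unitary because $E$ is positive semidefinite and $G$-stable: each $g\in G$ induces a permutation of the vectors up to phases, i.e.\ an element of the projective symmetry group). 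The dimension $d=\operatorname{rank}E<n$ because $E\ne J/n$ is a proper sub-idempotent.

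The main obstacle I expect is the careful bookkeeping of phases in part~(a): one must produce a \emph{coherent} choice of the $r$ representatives per line (and the global phases attached to each $g\in G$) so that the resulting Gram matrix is genuinely $G$-stable on the nose---not merely $G$-stable up to unimodular scalars---and so that the phase labels assemble into a well-defined homomorphism $K/H\to\mathbb{T}$ whose faithfulness is precisely (H5). This is the step where the passage $\mathscr{A}(G,H)\to L^2(H\backslash G/H)\to\mathbb{C}[H\backslash G/H]$ does the real work, converting a statement about unitary symmetries of vectors into a clean statement about multiplication in the double-coset algebra; getting the identification $K=N_G(H)$ and the abelian quotient exactly right is the crux, and everything else---(H1), (H3), (H4), the idempotency---then follows fairly mechanically. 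On the part~(b) side, the analogous subtlety is verifying multiplicity-freeness of $\operatorname{Ind}_K^G(\chi)$ for the nontrivial characters $\chi$; I would handle this by computing $\dim\operatorname{End}_G\bigl(\operatorname{Ind}_K^G(\chi)\bigr)$ via Mackey's formula as a sum over $(K,K)$-double cosets and showing, using (H3)--(H5), that only the trivial double coset and the one containing $b$ contribute, each contributing exactly~$1$.
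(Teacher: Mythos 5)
First, a point of comparison you could not have known: the paper proves only part (b) here and explicitly defers part (a) to the sequel \cite{IversonM:future}, so your sketch of (a) has no in-paper counterpart. Your (a) outline is in the expected spirit, but the difficulty you yourself flag at the end --- producing a \emph{coherent} lift of the projective symmetry group to a genuine group of unitaries with well-defined phases, so that the $rn\times rn$ Gram matrix is $G$-stable on the nose and $K\to\mathbb{T}$ is an honest homomorphism with kernel $H$ --- is precisely the hard content of (a), and your proposal names it as an obstacle rather than resolving it. As written, (a) is a plan, not a proof.

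For part (b) your route is genuinely different from the paper's. The paper funnels everything through the roux machinery: Lemma~\ref{lem.higman's roux} shows the Schurian scheme of a Higman pair \emph{is} a roux scheme (giving commutativity, hence the Gelfand property, for free), Theorem~\ref{thm.roux primitive idempotents} computes all primitive idempotents explicitly as $\mathcal{G}_\alpha^\epsilon=\sum_g\alpha(g)\lceil gI\rfloor+\mu_\alpha^\epsilon\sum_g\alpha(g)\lceil gB\rfloor$ (which immediately yields the constant off-diagonal modulus, the $r$th-root-of-unity phases, and $d_\alpha^\epsilon<n$), and the double transitivity of the line action is obtained by realizing $\mathcal{G}_\alpha^\epsilon$ as the Gram matrix of an orbit $\{\pi(x_ja_g)v\}$ of a unitary representation (citing \cite{IversonJM:17}) and identifying the stabilizer of $[v]$ with $K$. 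Your representation-theoretic route (Mackey's formula over the two $(K,K)$-double cosets for the Gelfand property; direct analysis of a primitive idempotent for the rest) is viable, but it has concrete gaps. The central one: you assert that ``by (H2)--(H5) the off-diagonal entries all have the same modulus with phases that are values of a character of $K/H$.'' This is the crux of the whole theorem and does not follow from $G$-stability alone; it requires Lemma~\ref{lem.basic higman pair facts}(a), i.e.\ that the thin classes $\{A_{aH}\}_{a\in K/H}$ act \emph{regularly} (via (H4)--(H5)) on the non-thin classes $\{A_{HabH}\}$, after which one argues that a primitive idempotent $E$ satisfies $A_{aH}E=\chi(a)E$ for some $\chi\in\widehat{K/H}$ and hence its value on $Ha_gbH$ is $\mu\chi(g)$ for a single $\mu$. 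Writing that out is essentially re-deriving Lemma~\ref{lem.higman's roux} and Theorem~\ref{thm.roux primitive idempotents}. Two smaller gaps: your Mackey argument establishes at most that each $\operatorname{Ind}_K^G\chi$ is multiplicity-free, but the disjointness of constituents across distinct $\chi$ (equivalently, vanishing of the $KbK$ Mackey term for $\chi\neq\chi'$) is exactly where (H4)--(H5) must enter and is not carried out; and ``$d=\operatorname{rank}E<n$ because $E\neq J/n$ is a proper sub-idempotent'' is not a valid inference --- a proper sub-idempotent of an $rn\times rn$ identity could a priori have any rank up to $rn-1$, and the bound $d<n$ comes from the quadratic relation forcing $\mu\neq0$ in $d=n/(1+(n-1)\mu^2)$.
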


We save the proof of Theorem~\ref{thm.Higman Pair Theorem}(a) for our sequel paper~\cite{IversonM:future}, as the techniques in this proof are very different from the themes in this paper, and furthermore, the proof presents our method for classifying doubly transitive lines, which we also perform in~\cite{IversonM:future}.
We include an asterisk in the theorem statement to mark this distinction.

As in the case of four lines in $\mathbb{C}^2$, doubly transitive lines always exhibit the remarkable feature that, for some finite $r$, one may select $r$ unit-norm representatives from each line in such a way that the phase of every inner product is an $r$th root of unity.
Next, we note that the block form of $D$ and $A$ above suggest that we embed the adjacency algebra $\mathscr{A}(G,H)$ as a subalgebra of $\mathbb{C}[C_4]^{4\times 4}$.
Under this mapping, $\{D^j\}_{j\in[4]}$ is sent to $\{\delta_gI\}_{g\in C_4}$ and $\{D^jA\}_{j\in[4]}$ is sent to $\{\delta_gB\}_{g\in C_4}$, where
\begin{equation}
\label{eq.roux example}
B
=\left[\begin{array}{llll}
0&\delta_{\mathrm{i}\phantom{-}} &\delta_{\mathrm{i}\phantom{-}} &\delta_{\mathrm{i}\phantom{-}}\\
\delta_{-\mathrm{i}}&0&\delta_{\mathrm{i}\phantom{-}}&\delta_{-\mathrm{i}}\\
\delta_{-\mathrm{i}}&\delta_{-\mathrm{i}}&0&\delta_{\mathrm{i}\phantom{-}}\\
\delta_{-\mathrm{i}}&\delta_{\mathrm{i}\phantom{-}}&\delta_{-\mathrm{i}}&0
\end{array}\right].
\end{equation}
(Recall that $\delta_g\in\mathbb{C}[C_4]$ denotes the image of $g\in C_4$ in the group ring.)
Note that the embedding $\mathscr{A}(G,H)\to\mathbb{C}[C_4]^{4\times 4}$ can be inverted by applying the Cayley representation (or more precisely, its linear extension to the group ring) to each matrix entry.
It is convenient to formalize the role that $B$ plays here:

\begin{definition}
A \textbf{roux} for a multiplicative abelian group $\Gamma$ is an $n\times n$ matrix $B$ with entries in $\mathbb{C}[\Gamma]$ such that each of the following holds simultaneously:
\begin{itemize}
\item[(R1)]
$B_{ii}=0$ for every $i\in[n]$.
\item[(R2)]
$B_{ij}\in\Gamma$ for every $i,j\in[n]$, $i\neq j$.
\item[(R3)]
$B_{ji}=(B_{ij})^{-1}$ for every $i,j\in[n]$, $i\neq j$.
\item[(R4)]
The matrices $\{gI\}_{g\in\Gamma}$ and $\{gB\}_{g\in\Gamma}$ span an algebra $\mathscr{A}(B)$.
\end{itemize}
\end{definition}

Note that $\mathscr{A}(B)$ is necessarily a commutative $*$-algebra since
\[
(gI)^*=g^{-1}I\in\mathscr{A}(B),
\qquad
(gB)^*=g^{-1}B\in\mathscr{A}(B),
\]
and the $gI$'s and $gB$'s all commute.
Later, we will explain how roux generalize the theory of regular abelian distance-regular antipodal covers of the complete graph (regular abelian \textsc{drackn}s), as studied by Godsil and Hensel in~\cite{GodsilH:92}.

Given a roux for $\Gamma$, we may evaluate the roux at a linear character $\alpha$ of $\Gamma$.
Specifically, $\alpha$ extends linearly to $\mathbb{C}[\Gamma]$, and its entrywise application amounts to a $*$-algebra homomorphism $\mathbb{C}[\Gamma]^{n\times n}\to\mathbb{C}^{n\times n}$.
Evaluating a roux at a character produces a signature matrix, and hence, equiangular lines.
For example, evaluating the above roux for $C_4$ at the character $\alpha$ defined by $\alpha(z)=z$ gives
\begin{equation}
\label{eq.2by4 signature matrix}
\mathcal{S}
=\left[\begin{array}{rrrr}
0&\mathrm{i} &\mathrm{i} &\mathrm{i}\\
-\mathrm{i}&0&\mathrm{i}&-\mathrm{i}\\
-\mathrm{i}&-\mathrm{i}&0&\mathrm{i}\\
-\mathrm{i}&\mathrm{i}&-\mathrm{i}&0
\end{array}\right].
\end{equation}
Since $\lambda_\mathrm{min}(\mathcal{S})=-\sqrt{3}$, the corresponding Gram matrix is $\mathcal{G}=I+(1/\sqrt{3})S$, which happens to be a principal submatrix of $P$ (if we ignore the additional factor of $1/8$ in $P$), namely, the shaded submatrix indexed by $\{1,5,9,13\}$.
We define \textbf{roux lines} to be any sequence of linearly dependent lines for which there exist unit-norm representatives whose signature matrix can be obtained by evaluating a roux at a character.

\begin{corollary}\
\label{cor.2tran-roux-etf}
\begin{itemize}
\item[${}^*$(a)]
All doubly transitive lines are roux.
\item[(b)]
Every roux line sequence has unit-norm representatives that form an equiangular tight frame for their span.
\end{itemize}
\end{corollary}


Our proof of Corollary~\ref{cor.2tran-roux-etf}(a) uses Theorem~\ref{thm.Higman Pair Theorem}(a) along with a characterization of association schemes that arise from Higman pairs; see Theorem~\ref{thm.schurian roux}.
We mark Corollary~\ref{cor.2tran-roux-etf}(a) with an asterisk above due to its dependence on Theorem~\ref{thm.Higman Pair Theorem}(a), whose proof does not appear in this paper.

Theorem~\ref{thm.Higman Pair Theorem} and Corollary~\ref{cor.2tran-roux-etf} together imply that roux generalize doubly transitive lines.
Explicitly, any sequence of doubly transitive lines has unit-norm representatives whose Gram matrix carries a Schurian association scheme that satisfies a few axioms.
Conversely, any Schurian association scheme satisfying these axioms produces doubly transitive lines through its primitive idempotents.
By generalizing these axioms to non-Schurian schemes, we produce the notion of a \textit{roux scheme}, which in turn determines a roux.
Even in the non-Schurian case, the primitive idempotents of a roux scheme describe equiangular lines (indeed, ETFs) in complex space, and it turns out that these are precisely the corresponding roux lines.

This paper is devoted to a detailed study of roux and related geometric and combinatorial objects.
In addition to roux lines and roux schemes, a particular adjacency matrix of a roux scheme determines a \textit{roux graph}, which in turn generalizes the existing notion of a regular abelian \textsc{drackn}~\cite{GodsilH:92,CoutinhoCSZ:16}.
These relationships and others make up the main results of this paper, which are summarized in Figure~\ref{figure.diagram}.

\begin{figure}
\begin{center}
\begin{tikzpicture}
\node at (0,7.3){\fbox{\textbf{\phantom{y}Geometry\phantom{y}}}};
\node at (11,7.3){\fbox{\textbf{\phantom{y}Combinatorics\phantom{y}}}};
\node at (0,6){doubly transitive lines};
\node at (5.5,6){Higman pair};
\node at (11,6){Schurian roux scheme};
\node at (0,4){roux lines};
\node at (5.5,4){roux};
\node at (11,4){roux scheme};
\node at (0,2){ETF};
\node at (11,2){roux graph};
\node at (0,0){\textsc{drackn} lines};
\node at (11,0){regular abelian \textsc{drackn}};
\draw [>=stealth,->] (2.2,6.1)--({3.6+0.5},6.1);
\node at ({(2.2+3.6+0.5)/2},6.3){\tiny{Thm.~\ref{thm.Higman Pair Theorem}(a)$^*$}};
\draw [>=stealth,<-] (2.2,5.9)--({3.6+0.5},5.9);
\node at ({(2.2+3.6+0.5)/2},5.7){\tiny{Thm.~\ref{thm.Higman Pair Theorem}(b)}};
\draw [>=stealth,<->] ({6.4+0.5},6)--({7.8+1},6);
\node at ({(6.4+0.5+7.8+1)/2},6.2){\tiny{Thm.~\ref{thm.schurian roux}}};
\draw [>=stealth,->] (0,5.6)--(0,4.4);
\node at (-0.8,{(5.6+4.4)/2}){\tiny{Cor.~\ref{cor.2tran-roux-etf}(a)$^*$}};
\draw [>=stealth,->] (11,5.6)--(11,4.4);
\draw [>=stealth,->] (1.2,4.1)--(4.7,4.1);
\node at ({(1.2+4.7)/2},4.3){\tiny{Cor.~\ref{cor.roux lines detector}}};
\draw [>=stealth,->] (4.7,3.9)--(1.2,3.9);
\draw [>=stealth,<->] (6.3,4)--(9.6,4);
\draw [>=stealth,->] (0,3.6)--(0,2.4);
\node at (-0.8,{(3.6+2.4)/2}){\tiny{Cor.~\ref{cor.2tran-roux-etf}(b)}};
\draw [>=stealth,<->] (11,3.6)--(11,2.4);
\draw [>=stealth,->] (0,0.4)--(0,1.6);
\node at (-1.1,{(0.4+1.6)/2}){\tiny{Thm.~4.1 in~\cite{CoutinhoCSZ:16}}};
\draw [>=stealth,->] (11,0.4)--(11,1.6);
\node at ({11-0.9},{(0.4+1.6)/2}){\tiny{Thm.~\ref{thm.drackn vs roux}(a)}};
\draw [>=stealth,->] (1.5,0.1)--(8.5,0.1);
\node at ({(1.5+8.5)/2},0.3){\tiny{Cor.~\ref{cor.drackn lines detector}}};
\draw [>=stealth,<-] (1.5,-0.1)--(8.5,-0.1);
\end{tikzpicture}
\end{center}
\caption{
\label{figure.diagram}
{\small 
Relationships between the primary objects in this paper.
Here, $A\to B$ indicates that an object of type $A$ gives rise to an object of type $B$, and unlabeled arrows follow from definition.
Notice that Higman pairs and roux serve as intermediaries between the worlds of geometry and combinatorics.
}\normalsize}
\end{figure}

\subsection{Context}
\label{subsec.context}

Our study of doubly transitive lines follows a tradition of pursuing optimality by way of symmetry~\cite{FejesToth:64}.
In fact, the specific connection we draw between symmetry and roux enjoys a historical precedent.
Taylor~\cite{Taylor:77} and Seidel~\cite{Seidel:92} report that G.~Higman introduced regular two-graphs in 1970 while studying doubly transitive permutation groups, specifically the action of $\text{Co}_3$ on 276 points.
As a group theorist, Higman may have been motivated by Burnside's theorem~\cite{Burnside:97}, which indicates a correspondence between certain doubly transitive actions and finite simple groups.
Evidently, Higman suspected that regular two-graphs could provide a setting for such groups, acting as their automorphisms.
In a sequel paper, we prove that doubly transitive two-graphs correspond to doubly transitive real lines~\cite{IversonM:future}.
In this sense, Higman was actually studying the automorphism groups of doubly transitive real lines when he naturally uncovered real ETFs, in their guise as regular two-graphs.
In direct analogy, the authors discovered roux while studying doubly transitive lines in complex space, and in fact, roux may be seen as combinatorial generalizations of doubly transitive lines.
Doubly transitive two-graphs have been classified by Taylor~\cite{Taylor:92}, and the present series may be seen as extending Taylor's work for the complex setting.
The connection between doubly transitive lines and equiangular tight frames was independently observed by Creignou~\cite{Creignou:07}.

In addition to regular two-graphs (which correspond with real ETFs), we will prove that roux encapsulate the notion of regular abelian \textsc{drackn}s.
The latter are already known to create ETFs~\cite{GodsilH:92,CoutinhoCSZ:16}, and the corresponding class of \textsc{drackn} lines contain all real ETFs, and are further contained by roux lines.
See Figure~\ref{figure.venn} for an illustration of these relationships.
The containment of regular abelian \textsc{drackn}s by roux is made possible by the generality of considering roux over arbitrary finite abelian groups, and not merely cyclic groups.
See Example~\ref{ex.thas somma drackn} for an explicit example of a regular abelian \textsc{drackn} (hence, a roux) that is most naturally defined over a non-cyclic group.

\begin{figure}
\begin{center}
\begin{tabular}{ll}
\begin{tikzpicture}[scale=0.8]
\coordinate (0) at (0,0);
\coordinate (1) at (1,0);
\coordinate (2) at (2,0);
\coordinate (3) at (1,3.95);
\coordinate (4) at (1,2.7);
\coordinate (5) at (0,1.37);
\coordinate (6) at (0,0);
\coordinate (7) at (2.9,0.3);
\coordinate (8) at (2.9,0);
\coordinate (9) at (2.9,-0.3);
\draw [thick] (1) circle [radius=4.5];
\draw [thick] (1) circle [radius=3.5];
\draw [thick] (0) circle [radius=2];
\draw [thick] (0) circle [radius=1];
\draw [dashed] (2) circle [radius=2];
\draw [fill=white, draw=white] (0.5,1.33) circle [radius=0.21];
\draw [fill=white, draw=white] (0,0) circle [radius=0.21];
\node at (3){\tiny{complex ETFs}};
\node at (4){\tiny{roux lines}};
\node at (5){\tiny{\textsc{drackn} lines}};
\node at (6){\tiny{real ETFs}};
\node at (7){\tiny{doubly}};
\node at (8){\tiny{transitive}};
\node at (9){\tiny{lines}};
\end{tikzpicture}
\end{tabular}
\end{center}
\caption{
\label{figure.venn}
{\small 
Venn diagram of the primary geometric objects in this paper.
While equiangular tight frames (ETFs) are not lines, they correspond to lines in a natural way.
All of these containments are nontrivial, and many of them correspond to results in this paper.
}\normalsize}
\end{figure}

These applications of roux and \textsc{drackn}s follow the well-established approach of using combinatorial designs to facilitate geometric constructions.
For instance, the technique of extracting unit-norm vectors from the idempotents of an association scheme dates back to the study of spherical designs by Delsarte, Goethels, and Seidel~\cite{DelsarteGS:77}, and has also been applied for ETFs~\cite{IversonJM:spie17,IversonJM:16,IversonJM:17}.
In addition to association schemes, ETFs have been constructed with the help of numerous other combinatorial methods, including difference sets~\cite{StrohmerH:03,XiaZG:05,DingF:07}, strongly regular graphs~\cite{Waldron:09,FickusJMPW:18}, block designs~\cite{FickusMT:12,JasperMF:14,FickusJMP:18,FickusJMP:arxiv,FickusJ:arxiv}, and discrete geometry~\cite{FickusMJ:16,FickusJMPW:17}; see~\cite{FickusM:online} for a living survey.

With the exception of~\cite{Creignou:07}, the connections above were known to the authors at the outset of this project.
After completing a draft of the present paper, the authors learned of additional combinatorial objects related to roux.
First, \emph{regular $t$-graphs}~\cite{Higman:90,Kalmanovich:13,Sankey:17} may be seen as ``switching equivalence classes'' of roux for the cyclic group $C_t \leq \mathbb{C}^\times$.
For the specific case $t=3$, Kalmanovich~\cite{Kalmanovich:13} has observed that every regular $3$-graph produces a regular abelian \textsc{drackn}, and this appears to be a specific instance of Theorem~\ref{thm.drackns from roux}.
Next, every roux is an instance of a \emph{regular weight}~\cite{Higman:90}, specifically, one for which the underlying \emph{rainbow} consists of $\{I,J-I\}$, where $J$ is the all-ones matrix.
(The reader may check the details, which follow very quickly from Lemma~\ref{lem.B squared} in the next section).
The relationship between a roux and its roux scheme is similar to an observation by Sankey~\cite{Sankey:17} made in the context of regular weights.
Regular weights, in turn, are specific instances of the more general notion of \emph{gain graphs}~\cite{Zaslavsky:89}, also known as \emph{voltage graphs}~\cite{Gross:74}.

\subsection{Outline}
\label{subsec.outline}

The remainder of this paper is laid out as follows.
Sections~\ref{sec.roux schemes} and~\ref{sec.roux lines} study the association schemes and lines that arise from roux, respectively.
Section~\ref{sec.roux graphs} establishes how roux generalize regular abelian \textsc{drackn}s, and Section~\ref{sec.apps and exs} leverages this generalization to resolve open questions about regular abelian \textsc{drackn}s before providing several examples of doubly transitive lines.
We conclude in Section~\ref{sec.summary} with a summary and the proofs of Theorem~\ref{thm.Higman Pair Theorem}(b) and Corollary~\ref{cor.2tran-roux-etf}.

\section{Roux schemes}
\label{sec.roux schemes}

Given a group $\Gamma$ of order $r$, let $\lceil\cdot\rfloor\colon\mathbb{C}[\Gamma]^{n\times n}\to\mathbb{C}^{rn\times rn}$ denote the injective $*$-algebra homomorphism that applies the Cayley representation of $\Gamma$ (extended linearly to $\mathbb{C}[\Gamma]$) to each entry of the input matrix.
Given a roux $B$ for an abelian group $\Gamma$, then $\{\lceil gI\rfloor\}_{g\in\Gamma}$ and $\{\lceil gB\rfloor\}_{g\in\Gamma}$ form the adjacency matrices of a commutative association scheme whose adjacency algebra is isomorphic to $\mathscr{A}(B)$; we refer to this as the corresponding \textbf{roux scheme}.
(This is akin to the correspondence between regular weights and coherent configurations given in~\cite{Sankey:17}.)
We say a scheme is \textit{roux} if the points have labels in $[n]\times\Gamma$ such that there exists an $n\times n$ roux for $\Gamma$ that produces the scheme through this process.
Roux schemes are important to the study of doubly transitive lines because they provide a combinatorial generalization of Higman pairs:

\begin{theorem}
\label{thm.schurian roux}
Let $G$ be a finite group, and pick $H\leq G$.
The Schurian scheme of $(G,H)$ is isomorphic to a roux scheme if and only if $(G,H)$ is a Higman pair.
\end{theorem}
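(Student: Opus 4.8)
The plan is to prove both directions by carefully tracking what it means for the Schurian scheme of $(G,H)$ to be roux. First, recall from \eqref{eq.schurian adjacency matrices} that the adjacency matrices of the Schurian scheme of $(G,H)$ are indexed by double cosets $HaH$, and via the isomorphisms \eqref{eq.isomorphisms} the adjacency algebra is identified with $\mathbb{C}[H\backslash G/H]$. The key structural observation is that when $(G,H)$ is a Higman pair, with $K = N_G(H)$ and key $b$, the double cosets $HaH$ split into two families: those with $a \in K$, which (since $K/H$ is abelian and $H \triangleleft K$) are exactly the left cosets $aH$ for $a \in K$ and are in bijection with the abelian group $\Gamma := K/H$; and those with $a \notin K$, which by (H1) (double transitivity of $G$ on $G/K$) all lie in a single $G$-orbit on pairs, hence correspond to the ``single'' non-identity relation — and by (H3)--(H5) this relation, together with the $K/H$-translates, behaves like $\{gB\}_{g\in\Gamma}$. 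Concretely, I would show that under $\lceil\cdot\rfloor$ the matrices $\lceil gI\rfloor$ for $g \in \Gamma$ are precisely the adjacency matrices $A_{aH}$ with $a \in K$ (these form the thin radical, a regular representation of $\Gamma$), and the matrices $\lceil gB\rfloor$ for $g \in \Gamma$ are the adjacency matrices $A_{HbaH}$ with $a\in K$; condition (H5) is exactly what guarantees these $2r$ matrices are distinct, and (H4) is what guarantees that right-multiplication by $K/H$ permutes the "$b$-type" double cosets simply transitively, so that there really are only $r$ of them and they are indexed by $\Gamma$.

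For the forward direction (Higman pair $\Rightarrow$ roux scheme), I would then define $B \in \mathbb{C}[\Gamma]^{n\times n}$ directly: index rows and columns by $G/K$ (so $n = [G:K]$), put $B_{ii} = 0$ (reflecting $A_1 = I$), and for $i \neq j$ set $B_{ij}$ to be the unique element $g \in \Gamma$ such that the pair $(i,j)$ lies in the relation $\lceil gB\rfloor$. One must check (R1)--(R4): (R1) and (R2) are immediate from the construction; (R3) follows from (H3) together with the fact that the involution on $\mathbb{C}[H\backslash G/H]$ sends $A_{HaH}$ to $A_{Ha^{-1}H}$; and (R4) — that $\{gI\}$ and $\{gB\}$ span an algebra closed under multiplication — is a translation of the fact that $\mathbb{C}[H\backslash G/H]$ is closed under multiplication, using (H2) to see that products of $K$-type cosets stay $K$-type and (H4) to see that products involving $b$-type cosets decompose correctly. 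A short computation (essentially the content of the forthcoming Lemma~\ref{lem.B squared} on $B^2$) shows the structure constants match. Finally, one checks that $\lceil gI\rfloor$ and $\lceil gB\rfloor$ recover exactly the Schurian adjacency matrices, so the roux scheme built from $B$ is isomorphic to the Schurian scheme of $(G,H)$.

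For the converse (Schurian scheme is roux $\Rightarrow$ Higman pair), suppose the Schurian scheme of $(G,H)$ is isomorphic, via a point-relabeling by $[n]\times\Gamma$, to the roux scheme of some $n\times n$ roux $B$ for $\Gamma$. The thin radical of a roux scheme is the regular representation of $\Gamma$ (the $\lceil gI\rfloor$'s), which is abelian of order $r = |\Gamma|$; on the other hand, the thin radical of the Schurian scheme of $(G,H)$ is a standard object — it is the image of $N_G(H)/H$ acting by right translation — so this forces $K/H \cong \Gamma$ abelian, giving (H2) and the numerology $r = [K:H]$, $n = [G:K]$. Now pick $b \in G$ so that the double coset $HbH$ corresponds under the relabeling to the relation $1\cdot B$ (i.e., the $(i,j)$ entries of $B$ equal to the identity but with $i\neq j$): then $b \notin K$, and (H3) comes from $B_{ji} = B_{ij}^{-1}$ forcing the $b$-relation to be symmetric, i.e. $HbH = Hb^{-1}H$; (H4) comes from the fact that the thin radical normalizes each relation set $\{gB\}_{g\in\Gamma}$ as a whole — conjugation by $a \in K$ must send $HbH$ into $\bigcup_g HbaH$, but degree considerations pin it to a single $Hb'H$ with $b' \in HbH$; and (H5) is exactly the statement that the $2r$ matrices $\{gI\}\cup\{gB\}$ are distinct, which holds in any roux scheme by definition. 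The only genuinely nontrivial axiom to extract is (H1): I would argue that the roux structure forces the ``off-diagonal block'' relations to all be translates of a single relation, which means $G$ has exactly two orbits on $(G/K)\times(G/K)$ — the diagonal and everything else — and that is precisely double transitivity of $G$ on $G/K$.

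The main obstacle is the converse direction, specifically the derivation of (H1): double transitivity is a strong condition, and one must be careful that "the Schurian scheme is roux" really does collapse all the non-thin double cosets of $H$ into a single $G$-orbit of relations on $G/K$ rather than merely organizing them with abelian translation symmetry. The crux is that a roux, by (R2), assigns a \emph{single group element} of $\Gamma$ to each off-diagonal position $(i,j)$ — there is no room for multiple "colors" beyond the $\Gamma$-grading — so after quotienting out the thin radical there is exactly one non-identity relation on $[n]$; transporting this back through the isomorphism and noting that the relation on $[n] = G/K$ is $G$-invariant yields (H1). I expect the forward direction to be essentially bookkeeping once the dictionary between double cosets and the entries of $B$ is set up, with (H4) and (H5) being the conditions that do real work in verifying (R4) and the distinctness required for a genuine roux, respectively.
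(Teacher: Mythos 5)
Your proposal takes essentially the same route as the paper: the forward direction is the paper's Lemma~\ref{lem.higman's roux} (define $B_{ij}$ by which double coset $Ha_gbH$ contains $x_i^{-1}x_j$, and match $\lceil gI\rfloor,\lceil gB\rfloor$ with $A_{a_gH},A_{Ha_gbH}$), and the converse runs through the characterization of Lemma~\ref{lem.roux scheme characterization} (commutative scheme whose thin radical $\{A_{aH}\}_{a\in K}$ acts regularly on the non-thin adjacency matrices, one of which is symmetric), from which (H1)--(H5) are extracted exactly as in the paper.

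One justification does not hold up as stated. For (H4) you propose that conjugation by $a\in K$ sends $HbH$ into the union of the non-thin double cosets and that ``degree considerations'' then pin the image down to $HbH$ itself. They cannot: the non-thin adjacency matrices are all thin-radical translates of $A_{HbH}$, so all non-thin double cosets have the same size, and degree distinguishes nothing among them. The step that actually does the work is commutativity of the scheme combined with the identity $A_{aH}A_{HxH}=A_{HaxH}$: one computes $A_{HabH}=A_{aH}A_{HbH}=A_{HbH}A_{aH}=(A_{a^{-1}H}A_{Hb^{-1}H})^\top=A_{HbaH}$, hence $HabH=HbaH$ and therefore $aba^{-1}\in HabHa^{-1}=HbaHa^{-1}=HbH$. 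This is the precise form of your (correct) slogan that the thin radical ``normalizes the relation set as a whole''; with that substitution, the rest of your converse --- (H2) from the abelian thin radical, (H3) from symmetry of $\lceil B\rfloor$, (H5) from regularity of the thin-radical action, and (H1) from the single non-thin orbit giving $G=K\cup KbK$ --- matches the paper's argument.
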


We will prove Theorem~\ref{thm.schurian roux} later.
The following lemma indicates the origin of the name \textit{roux}: it ``thickens up'' an otherwise thin scheme, imitating the role of roux in the culinary arts.

\begin{lemma}
\label{lem.roux scheme characterization}
An association scheme is isomorphic to a roux scheme if and only if it is commutative and its thin radical acts regularly (by multiplication) on the other adjacency matrices, at least one of which is symmetric.
\end{lemma}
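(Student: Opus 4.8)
The plan is to establish both directions of the equivalence, treating the roux-scheme condition as a structural characterization. First I would unpack what the roux scheme condition gives us: if a scheme is isomorphic to a roux scheme, then by construction its adjacency matrices are $\{\lceil gI\rfloor\}_{g\in\Gamma}$ together with $\{\lceil gB\rfloor\}_{g\in\Gamma}$. The first family consists of permutation matrices (the Cayley representation of $\Gamma$ acting diagonally), so they lie in the thin radical; moreover they form a group isomorphic to $\Gamma$, hence they act regularly among themselves. The relation $\lceil gI\rfloor \cdot \lceil hB\rfloor = \lceil (gh)B\rfloor$ shows this subgroup acts regularly on the remaining adjacency matrices $\{\lceil gB\rfloor\}_{g\in\Gamma}$ by left multiplication. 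Commutativity is immediate since $\mathscr{A}(B)$ is commutative (as noted right after the definition of roux). Finally, (R3) says $B_{ji} = (B_{ij})^{-1}$, and picking the character-free statement, the matrix $\lceil B\rfloor$ itself (i.e.\ $g = 1$) is symmetric precisely because $B$ is ``Hermitian'' in the group-ring sense: $\lceil B\rfloor^\top = \lceil B^*\rfloor = \lceil B\rfloor$ would need checking, but more carefully $B$ having $B_{ji}=B_{ij}^{-1}$ makes $\lceil B \rfloor$ symmetric since the Cayley representation sends $g^{-1}$ to the transpose of the matrix for $g$. So one of the $\lceil gB\rfloor$ is symmetric. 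That establishes the forward direction; the bookkeeping is routine but I would be careful that the thin radical is \emph{exactly} $\{\lceil gI\rfloor\}$ and contains nothing from the other class — this follows because a $\lceil gB\rfloor$ has zero diagonal blocks by (R1), hence is not a permutation matrix.

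For the converse, suppose $\{A_i\}$ is a commutative scheme whose thin radical $\Gamma$ (a group under multiplication, of order $r$ say) acts regularly on the set of remaining adjacency matrices, with at least one of the latter symmetric. Regularity of the $\Gamma$-action means the non-thin adjacency matrices come in $\Gamma$-orbits each of size $r$; pick orbit representatives $B^{(1)},\dots,B^{(m)}$ (though I suspect the intended setup forces $m=1$ after relabeling — I need to check whether Lemma~\ref{lem.roux scheme characterization} as stated allows several orbits, in which case the roux matrix $B$ would have entries that are sums over several group elements, contradicting (R2); so likely the hypothesis ``its thin radical acts regularly on the other adjacency matrices'' should be read as: there is a single orbit, equivalently the total number of classes is $2r$). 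Granting a single orbit $\{gB_0 : g\in\Gamma\}$ with $B_0$ symmetric, I would use the regular action of $\Gamma$ to identify the underlying point set with $[n]\times\Gamma$: fix a point in each $\Gamma$-orbit on vertices (the thin radical, acting as permutation matrices, partitions the $rn$ vertices into $n$ orbits of size $r$ since it acts freely — freeness because it's a \emph{regular} permutation representation of $\Gamma$, being the thin radical of a scheme carrying that group), and coordinatize each orbit by $\Gamma$. Under this labeling, the thin adjacency matrix for $g\in\Gamma$ becomes $\lceil gI\rfloor$, and $B_0$ becomes $\lceil B\rfloor$ for some $n\times n$ matrix $B$ over $\mathbb{C}[\Gamma]$ whose off-diagonal entries are single group elements (because $B_0$ is a $0/1$ matrix and, within each $r\times r$ block, it is a single permutation-matrix translate — here I use that $B_0$ commutes with every thin matrix, forcing each block to be $\Gamma$-equivariant, hence a translate of the regular representation or zero).

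The verification that this $B$ is a genuine roux then amounts to checking (R1)--(R4): (R1) the diagonal blocks of $B_0$ are zero because $B_0 \ne I$ and distinct adjacency matrices of a scheme have disjoint supports while $A_1 = I$ already occupies the diagonal; (R2) is the single-group-element observation above; (R3) follows from $B_0$ being symmetric together with the Cayley-representation fact that $\lceil g^{-1}I\rfloor = \lceil gI\rfloor^\top$, so $\lceil B\rfloor = \lceil B\rfloor^\top$ forces $B_{ji} = B_{ij}^{-1}$; and (R4) is exactly the statement that $\{\lceil gI\rfloor\} \cup \{\lceil gB\rfloor\}$ spans the adjacency algebra, which holds because these are precisely all the adjacency matrices of the given scheme (thin radical plus its single orbit). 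Then the resulting scheme from $B$ is, by construction, isomorphic to the original via the coordinatizing permutation. I expect the main obstacle to be the converse's coordinatization step — specifically, proving cleanly that the thin radical acts \emph{freely} (not merely transitively on its orbits) on the vertex set and that $B_0$ restricted to each $r\times r$ block is a single translate rather than an arbitrary $\Gamma$-invariant $0/1$ matrix. Both reduce to the fact that the thin radical, as a set of permutation matrices closed under multiplication with $I$ among them, is a faithful and regular permutation representation of the abstract group $\Gamma$, which one gets from axioms (A1)--(A3) applied to the subalgebra generated by the permutation-matrix classes; I would want to cite or quickly reprove that the thin radical of any association scheme is a regular permutation group on a union of ``fibers'' of equal size.
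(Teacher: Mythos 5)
Your proposal takes essentially the same route as the paper: the forward direction is identical (commutativity from $\Gamma$ abelian, regularity from $\lceil gI\rfloor\lceil hB\rfloor=\lceil ghB\rfloor$, symmetry of $\lceil B\rfloor$ from (R3)), and the converse is the paper's coordinatization of the vertex set as $[n]\times\Gamma$ via the free action of the thin radical, followed by recognizing the symmetric non-thin class as $\lceil B\rfloor$. Your worry about multiple orbits is moot: ``acts regularly'' means transitively with trivial stabilizers, so there is exactly one orbit, as the paper assumes. Two of your stated justifications need repair, though the conclusions are right. First, $\lceil gB\rfloor$ having zero diagonal blocks does not preclude it from being a permutation matrix (permutation matrices can have zero diagonal); the correct reason it lies outside the thin radical is that each of its rows has $n-1$ ones. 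Second, your argument for (R1) in the converse --- that $A_1=I$ already occupies the diagonal --- only forces the diagonal \emph{entries} of $B_0$ to vanish, not the full $r\times r$ diagonal blocks; what you need is that the thin matrices $\{P_\sigma\}_{\sigma\in\Gamma}$ are supported exactly on the diagonal blocks and, by regularity of $\Gamma$ on each fibre, cover them entirely, so disjointness of supports pushes every non-thin class entirely into the off-diagonal blocks. That same observation also settles your flagged concern about each off-diagonal block being a \emph{single} translate: the $r$ distinct matrices $\{P_\sigma B_0\}_{\sigma\in\Gamma}$ have disjoint supports contained in the off-diagonal blocks and must sum to $J_r$ there, which forces each $\Gamma$-circulant block of $B_0$ to be a single permutation matrix --- this is the paper's ``by counting'' step, and it does not reduce merely to the thin radical acting regularly on fibres.
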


\begin{proof}
($\Rightarrow$)
Suppose $B$ is a roux for $\Gamma$.
Then $\lceil B\rfloor$ is symmetric by (R3).
Furthermore, the thin radical in the roux scheme is $\{\lceil gI\rfloor\}_{g\in\Gamma}$, which acts regularly on the other matrices $\{\lceil gB\rfloor\}_{g\in\Gamma}$ since $\lceil gI\rfloor\lceil hB\rfloor=\lceil ghB\rfloor$ for $g,h\in\Gamma$.
The scheme is commutative since $\Gamma$ is abelian.

($\Leftarrow$)
Let $m$ denote the size of the adjacency matrices, and pick $\Gamma\leq S_m$ so that the thin radical is given by the matrix representations $\{P_\sigma\}_{\sigma\in\Gamma}$.
Select a symmetric adjacency matrix $A$ outside the thin radical.
Since $\Gamma$ acts regularly on these other matrices, they are given by the orbit $\{P_\sigma A\}_{\sigma\in\Gamma}$.
Since the scheme is commutative, $\Gamma$ is abelian.
Next, $\Gamma$ acts on $[m]$, and we claim that the stabilizer of each point is trivial.
To see this, fix $\sigma\in\Gamma$ with $\sigma\neq 1$ and $i\in[m]$.
Then we may pick $j\in[m]$ so that $A_{ij}=1$, in which case $A_{\sigma^{-1}(i),j}=(P_\sigma A)_{ij}=0$, meaning $\sigma^{-1}(i)\neq i$.
As such, the orbits of $[m]$ under $\Gamma$ all have size $r:=|\Gamma|$.
Put $n:=m/r$.

For each orbit $\mathcal{O}_i$, arbitrarily label one of the points $p\in\mathcal{O}_i$ with $\ell(p)=1\in\Gamma$ and label the other points $q\in\mathcal{O}_i$ with $\ell(q)=\sigma\in\Gamma$ such that $\sigma(p)=q$.
Then we may rearrange $[m]$ by ordering the orbits $\mathcal{O}_1,\ldots,\mathcal{O}_n$, and ordering points within each orbit according to $\Gamma$.
Conjugating $\{P_\sigma\}_{\sigma\in\Gamma}\cup\{P_\sigma A\}_{\sigma\in\Gamma}$ by this ordering produces matrices $\{\lceil\sigma I\rfloor\}_{\sigma\in\Gamma}\cup\{\lceil\sigma I\rfloor A'\}_{\sigma\in\Gamma}$. 
Finally, $A'=\lceil\sigma I\rfloor A' \lceil\sigma I\rfloor^{-1}$, and so $A'_{ij}=A'_{\sigma(i),\sigma(j)}$, meaning each $r\times r$ block of $A'$ is $\Gamma$-invariant, i.e., $A'=\lceil B\rfloor$ for some $B\in\mathbb{C}[\Gamma]^{n\times n}$.
By counting, each off-diagonal block of $A'$ is a permutation matrix, and so each off-diagonal entry of $B$ lies in $\Gamma$.
Overall, the adjacency matrices are $\{\lceil\sigma I\rfloor\}_{\sigma\in\Gamma}\cup\{\lceil\sigma B\rfloor\}_{\sigma\in\Gamma}$, and since $B$ satisfies (R1)--(R4), we may conclude that the scheme is roux.
\end{proof}

Next, we offer an alternative to (R4) that is often easier to work with in practice.

\begin{lemma}
\label{lem.B squared}
Suppose $B\in\mathbb{C}[\Gamma]^{n\times n}$ satisfies (R1)--(R3).
Then $B$ is a roux for $\Gamma$ if and only if
\[
B^2=(n-1)I+\sum_{g\in\Gamma}c_ggB
\]
for some complex numbers $\{c_g\}_{g\in\Gamma}$.
In this case, we necessarily have that $\{c_g\}_{g\in\Gamma}$ are nonnegative integers that sum to $n-2$, with $c_{g^{-1}}=c_g$ for every $g\in\Gamma$.
\end{lemma}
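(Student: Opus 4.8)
The plan is to reduce condition (R4) to a single identity by exploiting that $\mathbb{C}[\Gamma]$ is commutative and that each $gI$ (for $g\in\Gamma$) is a central scalar matrix. Write $V:=\operatorname{span}\{gI:g\in\Gamma\}+\operatorname{span}\{gB:g\in\Gamma\}$. Among products of spanning elements, $(gI)(hI)=(gh)I$, and $(gI)(hB)=(gh)B=(hB)(gI)$, all lie in $V$ automatically, so, given (R1)--(R3), condition (R4) holds if and only if $(gB)(hB)=gh\,B^2$ lies in $V$ for all $g,h\in\Gamma$; since $gh$ is a unit of $\mathbb{C}[\Gamma]$ and $V$ is stable under left multiplication by $\Gamma$, this is equivalent to the single requirement $B^2\in V$. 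Thus the whole content of (R4) is that $B^2$ is a $\mathbb{C}$-linear combination of the $gI$'s and $gB$'s.

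Next I would compute $B^2$ entrywise to pin down that combination. On the diagonal, (R1) kills the $k=i$ term and (R3) gives $B_{ik}B_{ki}=B_{ik}(B_{ik})^{-1}=1_\Gamma$ for $k\neq i$, so $(B^2)_{ii}=(n-1)\,1_\Gamma$; hence, if $B^2\in V$, its $\{gI\}$-component is forced to be exactly $(n-1)I$. This already yields both directions of the stated equivalence: if $B$ is a roux then $B^2=(n-1)I+\sum_{g\in\Gamma}c_g\,gB$ for suitable $c_g\in\mathbb{C}$, and conversely any such identity places $B^2$ in $V$, so (R4) holds.

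For the refinements, fix $i\neq j$ and examine the off-diagonal entry. By (R1) the terms $k=i,j$ vanish, so $(B^2)_{ij}=\sum_{k\in[n]\setminus\{i,j\}}B_{ik}B_{kj}$ is a sum of $n-2$ elements of $\Gamma$; as an element of $\mathbb{C}[\Gamma]$ it therefore has nonnegative integer coefficients summing to $n-2$. On the other hand, the identity gives $(B^2)_{ij}=\sum_{g\in\Gamma}c_g\,(g\,B_{ij})$, and since $g\mapsto g\,B_{ij}$ is a bijection of $\Gamma$, the multiset of coefficients of $(B^2)_{ij}$ is precisely $\{c_g:g\in\Gamma\}$. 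Comparing the two descriptions forces $c_g\in\mathbb{Z}_{\ge 0}$ for all $g$ and $\sum_{g\in\Gamma}c_g=n-2$. Finally, (R1) and (R3) say exactly that $B^\ast=B$ for the natural $\ast$-structure on $\mathbb{C}[\Gamma]^{n\times n}$ (conjugate-transpose together with the inversion involution on $\mathbb{C}[\Gamma]$), so $(B^2)^\ast=B^2$; applying $\ast$ to $B^2=(n-1)I+\sum_g c_g\,gB$ and matching coefficients (equivalently, computing $(B^2)_{ji}$ directly as above) gives $c_g=\overline{c_{g^{-1}}}$, which together with $c_g\in\mathbb{Z}$ yields $c_{g^{-1}}=c_g$.

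There is no serious obstacle here: each step is a short computation. The only points needing care are the reduction ``(R4) $\iff B^2\in V$,'' which rests on the centrality of the $gI$'s and the commutativity of $\mathbb{C}[\Gamma]$, and keeping the bookkeeping straight between the entries of $B^2$, which live in $\mathbb{C}[\Gamma]$, and the scalar coefficients $c_g$. (Throughout one tacitly takes $n\ge 2$, so that $B\neq 0$ and an off-diagonal entry is available for the coefficient comparison.)
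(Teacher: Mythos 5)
Your proof is correct and takes essentially the same route as the paper's: reduce (R4) to the single condition $B^2\in\operatorname{span}(\{gI\}\cup\{gB\})$ using commutativity, read off the coefficients from the diagonal and off-diagonal entries of $B^2$, and deduce $c_{g^{-1}}=c_g$ from self-adjointness. No gaps.
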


\begin{proof}
($\Leftarrow$)
It suffices to demonstrate (R4).
Note that $\{gI\}_{g\in\Gamma}$ and $\{gB\}_{g\in\Gamma}$ commute, and the assumption on $B^2$ gives that their span contains their pairwise products:
\[
(gI)(hI)=ghI,
\qquad
(gI)(hB)=ghB,
\qquad
(gB)(hB)=ghB^2.
\]
Thus, their span is an algebra.

($\Rightarrow$)
The diagonal entries of $B^2$ are given by
\begin{equation}
\label{eq.square's diagonal}
(B^2)_{ii}
=\sum_{j\in[n]}B_{ij}B_{ji}
=\sum_{\substack{j\in[n]\\ j\neq i}} B_{ij}(B_{ij})^{-1}
=n-1.
\end{equation}
As such, the diagonal component of $B^2$ is $(n-1)I$.
The off-diagonal component lies in the span of $\{gB\}_{g\in\Gamma}$.
Thus, we may write
\[
B^2=(n-1)I+\sum_{g\in\Gamma}c_ggB.
\]

For the final claim, we consider an off-diagonal entry of $B^2$:
\[
\sum_{h\in\Gamma}c_{h(B_{ij})^{-1}}h
=\sum_{g\in\Gamma}c_g(gB)_{ij}
=(B^2)_{ij}
=\sum_{k\in[n]} B_{ik}B_{kj}
=\sum_{\substack{k\in[n]\\i\neq k\neq j}} B_{ik}B_{kj}.
\]
The right-hand side is a sum of $n-2$ (not necessarily distinct) members of $\Gamma$, and so we conclude that $\{c_g\}_{g\in\Gamma}$ are nonnegative integers that sum to $n-2$.
Furthermore, $c_{g^{-1}}=c_g$ for every $g\in\Gamma$ since $B^2$ is self-adjoint.
\end{proof}

We will see that $\{c_g\}_{g\in\Gamma}$ serve as fundamental parameters to the study of roux, and we refer to them as \textbf{roux parameters}.
For example, the following result generalizes \eqref{eq.roux example}, but requires a definition:
A \textbf{conference matrix} is an $n\times n$ matrix $M$ with zero diagonal and off-diagonal entries in $\{\pm1\}$ such that $MM^\top=(n-1)I$.
For the lemma statement below, recall that $\delta_\mathrm{i}$ denotes the image of $\mathrm{i}\in C_4$ in the group ring $\mathbb{C}[C_4]$; note the absence of italics to distinguish from an index $i\in [n]$.

\begin{lemma}
\label{lem.conference roux}
Given an $n\times n$ antisymmetric conference matrix $M$, define $B\in\mathbb{C}[C_4]^{n\times n}$ by
\[
B_{ij}=\left\{\begin{array}{ll}
0&\text{if }M_{ij}=0;\\
\delta_{\mathrm{i}}&\text{if }M_{ij}=1;\\
\delta_{-\mathrm{i}}&\text{if }M_{ij}=-1.
\end{array}\right.
\]
Then $B$ is a roux for $C_4$ with parameters $c_{\pm1}=0$ and $c_{\pm \mathrm{i}}=n/2-1$.
\end{lemma}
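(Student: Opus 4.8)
The plan is to check (R1)--(R3) by hand and then invoke Lemma~\ref{lem.B squared}, reducing everything to a computation of $B^2$ and matching it against $(n-1)I+\sum_{g\in C_4}c_g\,gB$. Conditions (R1) and (R2) are immediate: $B_{ii}=0$ because $M_{ii}=0$, and every off-diagonal entry of $B$ is $\delta_{\pm\mathrm{i}}\in C_4$. For (R3), antisymmetry of $M$ gives $M_{ji}=-M_{ij}$, so $B_{ji}=(B_{ij})^{-1}$ off the diagonal. Hence $B$ satisfies the hypotheses of Lemma~\ref{lem.B squared}, and it remains to compute $B^2$.

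For the computation I would set $E$ to be the $0/1$ matrix recording the positions where $M_{ij}=1$. By antisymmetry $E^\top$ records the positions where $M_{ij}=-1$, and since the off-diagonal of $M$ is all $\pm1$ we have $E+E^\top=J-I$ and $E-E^\top=M$. With this notation $B=\delta_{\mathrm{i}}E+\delta_{-\mathrm{i}}E^\top$, and expanding $B^2$ using $\delta_{\mathrm{i}}^2=\delta_{-\mathrm{i}}^2=\delta_{-1}$ and $\delta_{\mathrm{i}}\delta_{-\mathrm{i}}=\delta_1$ gives
\[
B^2=\delta_{-1}\big(E^2+(E^\top)^2\big)+\big(EE^\top+E^\top E\big).
\]
Writing $E=\tfrac12(M+(J-I))$ and $E^\top=\tfrac12((J-I)-M)$, the cross terms $M(J-I)$ and $(J-I)M$ cancel in both symmetric combinations, leaving $E^2+(E^\top)^2=\tfrac12((J-I)^2+M^2)$ and $EE^\top+E^\top E=\tfrac12((J-I)^2-M^2)$. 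Now I use the two facts available to us: antisymmetry together with the conference identity $MM^\top=(n-1)I$ gives $M^2=-(n-1)I$, while $(J-I)^2=(n-2)(J-I)+(n-1)I$. Substituting, $E^2+(E^\top)^2=\tfrac{n-2}{2}(J-I)$ and $EE^\top+E^\top E=\tfrac{n-2}{2}(J-I)+(n-1)I$, hence
\[
B^2=(n-1)I+\tfrac{n-2}{2}\,(1+\delta_{-1})(J-I).
\]

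To finish, observe that $(\delta_{\mathrm{i}}+\delta_{-\mathrm{i}})B=(1+\delta_{-1})(E+E^\top)=(1+\delta_{-1})(J-I)$, so $B^2=(n-1)I+\tfrac{n-2}{2}(\delta_{\mathrm{i}}+\delta_{-\mathrm{i}})B$. By Lemma~\ref{lem.B squared}, $B$ is a roux for $C_4$, and reading off coefficients gives $c_{\pm1}=0$ and $c_{\pm\mathrm{i}}=n/2-1$; as a sanity check these are nonnegative integers summing to $n-2$, as the lemma demands (note that $n$ is even since an antisymmetric conference matrix satisfies $M^2=-(n-1)I$). The argument is entirely routine; the only place to be careful is the passage between $\{E,E^\top\}$ and $\{M,\,J-I\}$ and the observation that the noncommuting cross terms cancel in the two symmetric products, which is precisely what makes the identities clean. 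I do not anticipate any real obstacle.
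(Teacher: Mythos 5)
Your proof is correct. The overall skeleton matches the paper's: verify (R1)--(R3) directly and then reduce (R4) to a computation of $B^2$ via Lemma~\ref{lem.B squared}. Where you diverge is in how $B^2$ is computed. The paper works entrywise: for $i\neq j$ it observes that $0=-(MM^\top)_{ij}=\sum_{k\neq i,j}M_{ik}M_{kj}$ with each summand in $\{\pm1\}$, so exactly $n/2-1$ of the $n-2$ products equal $+1$, which immediately yields $(B^2)_{ij}=(n/2-1)(\delta_{\mathrm{i}}+\delta_{-\mathrm{i}})B_{ij}$. You instead decompose $B=\delta_{\mathrm{i}}E+\delta_{-\mathrm{i}}E^\top$ with $E=\tfrac12(M+J-I)$ and push the whole computation through matrix algebra, using $M^2=-(n-1)I$ and $(J-I)^2=(n-2)(J-I)+(n-1)I$; I checked the expansions of $E^2+(E^\top)^2$ and $EE^\top+E^\top E$ and the cancellation of the cross terms, and they are all correct, as is the final identification $(\delta_{\mathrm{i}}+\delta_{-\mathrm{i}})B=(1+\delta_{-1})(J-I)$. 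The two computations encode the same information (your $EE^\top+E^\top E$ identity is the global form of the paper's half-and-half count), but your version makes the evenness of $n$ and the integrality of the parameters fall out of the matrix identities rather than a counting argument, at the cost of a slightly longer calculation. Either is a legitimate proof of the lemma.
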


\begin{proof}
First, (R1)--(R3) hold by definition.
For (R4), we leverage Lemma~\ref{lem.B squared}.
To this end, we first note that the diagonal of $B^2$ satisfies \eqref{eq.square's diagonal}.
Next, take any $i\neq j$.
Then
\[
0
=-(MM^\top)_{ij}
=(M^2)_{ij}
=\sum_{k\in[n]\setminus\{i,j\}}M_{ik}M_{kj},
\]
and since each term on the right-hand side lies in $\{\pm1\}$, we conclude that half of these terms (i.e., $n/2-1$ of them) are $+1$ and the other half are $-1$.
As such,
\[
(B^2)_{ij}
=\sum_{k\in[n]\setminus\{i,j\}}\delta_{\mathrm{i}M_{ik}}\delta_{\mathrm{i}M_{kj}}
=\sum_{k\in[n]\setminus\{i,j\}}\delta_{-M_{ik}M_{kj}}
=\Big(\frac{n}{2}-1\Big)(\delta_1+\delta_{-1})
=\Big(\frac{n}{2}-1\Big)(\delta_{\mathrm{i}}+\delta_{-\mathrm{i}})B_{ij},
\]
implying $B^2=(n-1)I+(n/2-1)\delta_\mathrm{i}B+(n/2-1)\delta_{-\mathrm{i}}B$, as desired.
\end{proof}

\begin{lemma}[Basic roux transformations]
\label{lem.basic roux trans}
Take any $n\times n$ roux $B$ with parameters $\{c_g\}_{g\in \Gamma}$.
\begin{itemize}
\item[(a)]
Given a diagonal matrix $D\in\mathbb{C}[\Gamma]^{n\times n}$ with $D_{ii}\in\Gamma$ for every $i\in[n]$, then $DBD^{-1}$ is a roux for $\Gamma$ with parameters $\{c_g\}_{g\in \Gamma}$.
\item[(b)]
Given $h\in\Gamma$, then $hB$ is a roux for $\Gamma$ if and only if $h^2=1$.
In that case, its roux parameters are $\{c_{gh}\}_{g\in\Gamma}$.
\item[(c)]
Given a homomorphism $\varphi\colon\Gamma\to\Lambda$, extend $\varphi$ linearly to the group ring and apply entrywise to get $\bar\varphi\colon\mathbb{C}[\Gamma]^{n\times n}\to\mathbb{C}[\Lambda]^{n\times n}$.
Then $\bar\varphi(B)$ is a roux for $\Lambda$ with parameters $\big\{\sum_{g\in\varphi^{-1}(\lambda)}c_g\big\}_{\lambda\in\Lambda}$, where the empty sum is taken to be zero.
\item[(d)]
Given a group $\Lambda\geq\Gamma$, then $B$ is a roux for $\Lambda$ with parameters $\{c'_\lambda\}_{\lambda\in\Lambda}$, where $c'_\lambda=c_\lambda$ if $\lambda\in\Gamma$ and $c'_\lambda=0$ otherwise.
\end{itemize}
\end{lemma}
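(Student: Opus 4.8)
The plan is, in each of the four parts, to verify conditions (R1)--(R3) for the transformed matrix by a short direct computation and then to read off (R4) together with the new roux parameters from Lemma~\ref{lem.B squared}, since in every case the square of the transformed matrix is easy to express in terms of $B^2=(n-1)I+\sum_{g\in\Gamma}c_g\,gB$. For (a), conjugation by $D$ kills the diagonal and sends an off-diagonal entry to $D_{ii}B_{ij}D_{jj}^{-1}$, a product of elements of the abelian group $\Gamma$ and hence an element of $\Gamma$; (R3) survives because inversion reverses that product. Since each $g\in\mathbb{C}[\Gamma]$ is central and $D$ commutes with $I$, one gets $(DBD^{-1})^2=DB^2D^{-1}=(n-1)I+\sum_g c_g\,g(DBD^{-1})$, and Lemma~\ref{lem.B squared} yields (R4) with the same parameters.

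For (b), the matrix $hB$ automatically satisfies (R1)--(R2), while (R3) for $hB$ at an off-diagonal pair reads $hB_{ij}^{-1}=(hB_{ij})^{-1}=h^{-1}B_{ij}^{-1}$, which after cancelling $B_{ij}^{-1}$ is equivalent to $h^2=1$; this settles the ``only if'' direction. When $h^2=1$ we have $(hB)^2=h^2B^2=B^2=(n-1)I+\sum_g c_g\,gB$, and rewriting $gB=(gh)(hB)$ (legal since $h=h^{-1}$) and reindexing $g\mapsto gh$ turns this into $(hB)^2=(n-1)I+\sum_g c_{gh}\,g(hB)$, so Lemma~\ref{lem.B squared} certifies $hB$ as a roux with parameters $\{c_{gh}\}_{g\in\Gamma}$.

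For (c), observe that $\bar\varphi$ is a unital algebra homomorphism $\mathbb{C}[\Gamma]^{n\times n}\to\mathbb{C}[\Lambda]^{n\times n}$ (it acts entrywise via the group-ring extension of $\varphi$, hence respects matrix products and sends $I$ to $I$), and that it preserves (R1)--(R3) because $\varphi$ carries $\Gamma$ into $\Lambda$ and commutes with inversion. Applying $\bar\varphi$ to the identity for $B^2$ gives $\bar\varphi(B)^2=(n-1)I+\sum_g c_g\,\varphi(g)\,\bar\varphi(B)$, and grouping the sum by the fibers of $\varphi$ rewrites it as $\sum_{\lambda\in\Lambda}\bigl(\sum_{g\in\varphi^{-1}(\lambda)}c_g\bigr)\lambda\,\bar\varphi(B)$, with empty sums equal to $0$; Lemma~\ref{lem.B squared} then finishes the case, where $\Lambda$ is taken abelian so that ``roux for $\Lambda$'' is defined. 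Part (d) is simply the special case of (c) in which $\varphi$ is the inclusion $\Gamma\hookrightarrow\Lambda$: then $\varphi^{-1}(\lambda)=\{\lambda\}$ for $\lambda\in\Gamma$ and $\varphi^{-1}(\lambda)=\varnothing$ otherwise, giving exactly the stated parameters.

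All of this is routine; the only places that warrant care are the parameter bookkeeping---the reindexing $g\mapsto gh$ in (b) and the refibering over $\varphi$ in (c)---and the (easy) check that $\bar\varphi$ is genuinely multiplicative on matrices. I do not expect a real obstacle.
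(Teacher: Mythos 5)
Your proposal is correct and follows essentially the same route as the paper: the paper declares (a), (c), and the backward direction of (b) to be straightforward, writes out only the computation $hB_{ij}=(hB_{ji})^{-1}=h^{-1}B_{ij}$ forcing $h^2=1$ in (b), and derives (d) from (c) via the inclusion $\Gamma\hookrightarrow\Lambda$. Your write-up simply fills in the ``straightforward'' verifications (conjugation preserving (R1)--(R3), the reindexing $g\mapsto gh$, and the fibering of the parameters under $\bar\varphi$) by appealing to Lemma~\ref{lem.B squared}, exactly as intended.
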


These roux transformations suggest various invariants.
Part (a) establishes ``switching equivalence classes'' of roux.
Notice that a reasonable representative of each class takes $B_{i,1}$ and $B_{1,i}$ to be the identity element of $\Gamma$ for every $i\neq 1$.
For (b), we note that $\lceil B\rfloor$ and $\lceil hB\rfloor$ are adjacency matrices in the same roux scheme, meaning each roux generates the same scheme, though with relations re-indexed.
Given a roux scheme with a fixed indexing of the vertices, the $hB$'s in part (b) are the only roux that produce this scheme.
While (d) explains how to view $B$ as a roux for a supergroup, Lemma~\ref{lem.inverse prob} in the next section shows how to view $B$ as a roux for a subgroup (provided the roux parameters are zero on the complement of the subgroup).

\begin{proof}[Proof of Lemma~\ref{lem.basic roux trans}]
First (a) and (c) are straightforward, as is ($\Leftarrow$) in (b).
For ($\Rightarrow$), define $A=hB$ and pick $i$ and $j$ such that $i\neq j$.
Then (R3) implies
\[
hB_{ij}
=A_{ij}
=(A_{ji})^{-1}
=(hB_{ji})^{-1}
=h^{-1}(B_{ji})^{-1}
=h^{-1}B_{ij}.
\]
Multiplying both sides by $h(B_{ij})^{-1}$ then gives $h^2=1$.
Finally, (d) follows from (c) since the natural injection $\Gamma\to\Lambda$ is a homomorphism.
\end{proof}

In order to prove ($\Leftarrow$) in Theorem~\ref{thm.schurian roux}, we need a technical lemma regarding the structure of Higman pairs:

\begin{lemma}
\label{lem.basic higman pair facts}
Given a Higman pair $(G,H)$, denote $K=N_G(H)$, $n=[G:K]$ and $r=[K:H]$, and select any key $b\in G\setminus K$.
Then
\begin{itemize}
\item[(a)]
$H$ has $2r$ double cosets in $G$: $r$ of the form $aH$, and $r$ of the form $HabH$ for some $a\in K$;
\item[(b)]
for every $a\in K$, we have $HabH=HbaH$; and
\item[(c)]
for every $a\in K$, we have $|HabH|=(n-1)|H|$.
\end{itemize}
\end{lemma}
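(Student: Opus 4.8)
The plan is to work entirely with double cosets, using only that $H\trianglelefteq K$ (which holds since $K=N_G(H)$) together with (H1), (H4), and (H5); (H2) is not needed for this lemma. First I would record two preliminary facts. Since $H$ is normal in $K$, for every $a\in K$ we have $HaH=aH=Ha$, so the double cosets contained in $K$ are exactly the $r$ left cosets of $H$ in $K$. Next, writing $aba^{-1}\in HbH$ from (H4) and moving $H$ past $a$ and $a^{-1}$, we get $a(HbH)a^{-1}=H(aba^{-1})H=HbH$; that is, $K$ normalizes the subset $HbH$ by conjugation. Finally, (H1) says $K$ has exactly two orbits on $G/K$, i.e.\ exactly two double cosets in $G$, namely $K$ and $G\setminus K$; since $b\notin K$, the latter is $KbK$.

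For part (b): choose $h_1,h_2\in H$ with $aba^{-1}=h_1bh_2$ (possible by (H4)), so $ab=h_1bh_2a$ and hence $HabH=Hbh_2aH$. Since $bh_2\in HbH$ trivially, $bh_2a\in(HbH)a=HbaH$ (using $Ha=aH$), and because distinct double cosets are disjoint this forces $Hbh_2aH=HbaH$. Therefore $HabH=HbaH$.

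For parts (a) and (c): using the conjugation-invariance of $HbH$ under $K$, expand $KbK=\bigcup_{k_1,k_2\in K}Hk_1bk_2H=\bigcup_{k_1,k_2}k_1(HbH)k_2=\bigcup_{k_1,k_2}(k_1k_2)(HbH)=\bigcup_{k\in K}k(HbH)=\bigcup_{k\in K}HkbH$. Thus every double coset in $KbK$ has the form $HkbH$ with $k\in K$ --- equivalently $HabH$ with $a\in K$, by (b) --- and has cardinality $|k(HbH)|=|HbH|$. To count them: $HkbH=Hk'bH$ iff $(k')^{-1}k\cdot(HbH)=HbH$ iff $(k')^{-1}k\,b\in HbH$; and the set $\{g\in K:gb\in HbH\}$ equals $H$, the inclusion $\subseteq$ being precisely (H5) and $\supseteq$ being trivial. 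So $HkbH=Hk'bH$ iff $kH=k'H$, giving exactly $[K:H]=r$ such double cosets; together with the $r$ cosets $aH\subseteq K$ this is (a). These $r$ double cosets partition $KbK$ and all have size $|HbH|$, so $r\,|HbH|=|KbK|=|G|-|K|=(n-1)|K|=(n-1)r|H|$, whence $|HbH|=(n-1)|H|$; since each $HabH$ ($a\in K$) is one of these double cosets, $|HabH|=(n-1)|H|$, which is (c).

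I expect the only real difficulty to be bookkeeping: one must consistently distinguish left- from right-multiplication when collapsing double cosets (e.g.\ computing $HkbH=k(HbH)$ versus $HbkH=(HbH)k$), and one must recognize in the counting step that the stabilizer set that appears is exactly the one constrained by (H5). The clean identity $ab=h_1bh_2a$ extracted from (H4) is what makes (b) immediate, and everything else is a counting argument with $|G|=n|K|$ and $|K|=r|H|$.
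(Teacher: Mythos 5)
Your proof is correct and follows essentially the same route as the paper's: both use (H1) to decompose $G=K\sqcup KbK$, use (H4) to show the double cosets in $KbK$ are the translates $a(HbH)=HabH$, invoke (H5) to see that $K/H$ permutes these freely (hence exactly $r$ of them), and conclude (c) from $r\,|HbH|=|KbK|=(n-1)|K|$. Your packaging of (H4) as ``$K$ normalizes the subset $HbH$'' and your disjointness argument for (b) (in place of the paper's cardinality comparison) are minor stylistic variants of the same computation.
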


\begin{proof}
For (a), (H1) implies that $G$ is a disjoint union of $K$ and $KbK$.
Next, $K$ is covered by left cosets of $H$ in $K$ (these are double cosets of $H$ in $G$ since $K$ normalizes $H$), while $KbK$ is covered by sets of the form
\[
(aH)b(a'H)
=aa'(a')^{-1}Hba'H
=aa'H(a')^{-1}ba'H
\stackrel{(*)}{\subseteq}aa'HbH
=Haa'bH
\qquad
(a,a'\in K);
\]
here, $(*)$ applies (H4).
Since $G$ can be partitioned into double cosets of $H$, we conclude that every double coset of $H$ has the form $aH$ or $HabH$ for some $a\in K$.
To count these double cosets, consider the action of $K/H$ on the double cosets defined by $aH\cdot HxH = HaxH$.
There are two orbits under this action: those of the form $aH$, and those of the form $HabH$.
In particular, $H$ is the stabilizer of $H$, and (H5) gives that $H$ is also the stabilizer of $HbH$, meaning $K/H$ acts regularly on both orbits.
This gives (a).

Next, we apply (H4) to get
\[
HbaH
=Haa^{-1}baH
=aHa^{-1}baH
\subseteq aHbH
=HabH.
\]
We obtain equality by counting: $|HbaH|=|HbHa|=|HbH|=|aHbH|=|HabH|$.
This gives (b).

For (c), note that our proof of (b) gives that $|HabH|=|HbH|$ for every $a\in K$.
It suffices to show that $|HbH|=(n-1)|H|$.
Recall that the double cosets of the form $aH$ cover $K$, whereas the double cosets of the form $HabH$ cover $KbK=G\setminus K$.
By (a), we therefore have
\[
r|HbH|
=|KbK|
=|G|-|K|
=(n-1)|K|
=r(n-1)|H|,
\]
and division by $r$ gives the result.
\end{proof}

\begin{lemma}[Roux from Higman pairs]
\label{lem.higman's roux}
Given a Higman pair $(G,H)$, denote $K=N_G(H)$ and $n=[G:K]$, and select any key $b\in G\setminus K$.
Choose left coset representatives $\{x_j\}_{j\in[n]}$ for $K$ in $G$ and choose coset representatives $\{a_g\}_{g\in K/H}$ for $H$ in $K$.
Define $B\in\mathbb{C}[K/H]^{n\times n}$ entrywise as follows:
Given $i\neq j$, let $B_{ij}$ be the unique $g\in K/H$ for which $x_i^{-1}x_j\in Ha_gbH$, and set $B_{ii}=0$.
Then $B$ is a roux for $K/H$ with roux parameters $\{c_g\}_{g\in K/H}$ given by
\[
c_g
=\frac{n-1}{|H|}\cdot|bHb^{-1}\cap Ha_gbH|.
\]
Furthermore, the roux scheme generated by $B$ is isomorphic to the Schurian scheme of $(G,H)$.
\end{lemma}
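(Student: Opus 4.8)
The plan is to check the four roux axioms for $B$ directly, then identify the roux scheme it generates with the Schurian scheme of $(G,H)$, and finally extract the roux parameters from that identification. Well-definedness, (R1), and (R2) are immediate: for $i\neq j$ we have $x_i^{-1}x_j\notin K$ (since $x_iK\neq x_jK$), so by Lemma~\ref{lem.basic higman pair facts}(a) it lies in exactly one double coset of the form $Ha_gbH$ with $g\in K/H$, and this $g$ is $B_{ij}$; also $B_{ii}=0$ and $B_{ij}\in K/H$ by construction. For (R3), since $x_j^{-1}x_i=(x_i^{-1}x_j)^{-1}$ it suffices to show $(Ha_gbH)^{-1}=Ha_{g^{-1}}bH$. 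Writing $(Ha_gbH)^{-1}=Hb^{-1}a_g^{-1}H$, one checks with (H3) and (H4) that $b^{-1}$ is again a key, applies Lemma~\ref{lem.basic higman pair facts}(b) to it to move $b^{-1}$ past $a_g^{-1}$, and then uses $HbH=Hb^{-1}H$ together with the normality of $H$ in $K$ to replace $b^{-1}$ by $b$ and $a_g^{-1}$ by $a_{g^{-1}}$; this gives $B_{ji}=g^{-1}=(B_{ij})^{-1}$.

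The core step is the scheme identification, which will also yield (R4). Since $G=\bigsqcup_{j\in[n]}\bigsqcup_{g\in K/H}x_ja_gH$, the rule $(j,g)\mapsto x_ja_gH$ is a bijection $\iota\colon[n]\times(K/H)\to G/H$; let $P$ be the corresponding permutation matrix. I would show by direct computation from \eqref{eq.schurian adjacency matrices} that $P^{-1}A_{a_mH}P=\lceil mI\rfloor$ and $P^{-1}A_{Ha_mbH}P=\lceil mB\rfloor$ for all $m\in K/H$. In the first case, equal block indices produce the element $a_{\gamma_2}^{-1}a_{\gamma_1}\in K$, whose coset in $K/H$ is $\gamma_1\gamma_2^{-1}$, exactly the $(\gamma_1,\gamma_2)$ entry of the Cayley representation of $m$, while distinct block indices produce an element outside $K$, so $A_{a_mH}$ is block diagonal. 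In the second case, for distinct block indices $i\neq j$ one pulls the $K$-elements out of $a_{\gamma_2}^{-1}x_j^{-1}x_ia_{\gamma_1}$ using normality of $H$ in $K$ and Lemma~\ref{lem.basic higman pair facts}(b), landing in $Ha_{B_{ji}\gamma_1\gamma_2^{-1}}bH$; the identity $B_{ji}\gamma_1\gamma_2^{-1}=m$ matches the $(\gamma_1,\gamma_2)$ entry of $\lceil mB\rfloor$ exactly because (R3) gives $B_{ji}=(B_{ij})^{-1}$. By Lemma~\ref{lem.basic higman pair facts}(a) these are all the adjacency matrices of the Schurian scheme, so conjugation by $P$ carries $\mathscr{A}(G,H)$ onto the span of $\{\lceil mI\rfloor\}\cup\{\lceil mB\rfloor\}$; since $\lceil\cdot\rfloor$ is an injective $*$-algebra homomorphism, the matrices $\{mI\}\cup\{mB\}$ therefore span an algebra in $\mathbb{C}[K/H]^{n\times n}$, which is (R4), and $P$ realizes the roux scheme of $B$ as isomorphic to the Schurian scheme of $(G,H)$.

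For the roux parameters, note that $a_1\in H$ makes the above specialize to $\lceil B\rfloor=P^{-1}A_{HbH}P$, so by Lemma~\ref{lem.B squared} the parameter $c_g$ is the coefficient of $A_{Ha_gbH}$ in the product $A_{HbH}^2$ in the Schurian scheme (the relation convention matters only up to $g\mapsto g^{-1}$, which is harmless since $c_{g^{-1}}=c_g$ and $|bHb^{-1}\cap Ha_gbH|=|bHb^{-1}\cap Ha_{g^{-1}}bH|$ by taking inverses). Writing out this coefficient as a count of vertices, with the base pair taken among $\{H,a_gbH\}$ and the left cosets of $H$ in $HbH$ indexed by coset representatives of $H\cap bHb^{-1}$ in $H$, one finds it equals the number of such cosets $h(H\cap bHb^{-1})$ for which $b^{-1}h^{-1}a_gb\in HbH$. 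A further double-coset computation with (H3) and Lemma~\ref{lem.basic higman pair facts}(b) rewrites the underlying subset of $H$ into bijection with $bHb^{-1}\cap Ha_gbH$; since $[H:H\cap bHb^{-1}]=n-1$ by Lemma~\ref{lem.basic higman pair facts}(c), this gives $c_g=|bHb^{-1}\cap Ha_gbH|/|H\cap bHb^{-1}|=\tfrac{n-1}{|H|}\,|bHb^{-1}\cap Ha_gbH|$.

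The main obstacle is this last step: one must be careful on which side the conjugation by $b$ acts, so that $bHb^{-1}$ rather than $b^{-1}Hb$ appears, and one must verify that the membership condition $b^{-1}h^{-1}a_gb\in HbH$ is constant on the cosets $h(H\cap bHb^{-1})$, so that the vertex count is literally $[H:H\cap bHb^{-1}]$ times a count of cosets inside $bHb^{-1}\cap Ha_gbH$. The scheme-identification computation of the second paragraph is bookkeeping-heavy but entirely mechanical once the bijection $\iota$ and Lemma~\ref{lem.basic higman pair facts}(b) are in hand.
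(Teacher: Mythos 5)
Your proposal is correct, and for most of its length it mirrors the paper's proof: the verification of (R1)--(R3), the bijection $(j,g)\mapsto x_ja_gH$ between $[n]\times(K/H)$ and $G/H$, and the identification $P^{-1}A_{a_mH}P=\lceil mI\rfloor$, $P^{-1}A_{Ha_mbH}P=\lceil mB\rfloor$ (which the paper calls ``straightforward to verify'' inside its map $\psi$) are exactly the paper's route to (R4) and to the scheme isomorphism. Where you genuinely diverge is the computation of the roux parameters. The paper transports $B$ through the anti-isomorphisms of \eqref{eq.isomorphisms} to $\frac{1}{|H|}\sum_{x\in HbH}x\in\mathbb{C}[H\backslash G/H]$, squares it there using the orbit--stabilizer theorem twice, and reads off the coefficient $\frac{|H|^2}{|HxH|}\cdot|bHb^{-1}\cap HxH|$ before transporting back. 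You instead compute the intersection number $p_{HbH,HbH}^{Ha_gbH}$ of the Schurian scheme directly as a count of left cosets $zH\subseteq HbH$ satisfying a membership condition, which you reduce to $|bHb^{-1}\cap Ha_gbH|/|H\cap bHb^{-1}|$. Both give the same structure constants; the paper's convolution argument is algebraically slicker and avoids coset bookkeeping, while yours is more elementary and makes the combinatorial meaning of $c_g$ (a common-neighbor count) visible. The two care points you flag are real and both check out: the condition $b^{-1}h^{-1}a_gb\in HbH$ is constant on cosets $h(H\cap bHb^{-1})$ because $bHb^{-1}$ absorbs those factors, and the solution set in $H$ has size exactly $|bHb^{-1}\cap Ha_gbH|$ because every nonempty intersection of the subgroup $bHb^{-1}$ with a left coset of $H$ has size $|bHb^{-1}\cap H|=|H|/(n-1)$.
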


Notice that a different choice of $x_j$'s produces a switching equivalent roux (as in Lemma~\ref{lem.basic roux trans}(a)), whereas a different choice of $a_g$'s makes no change to $B$.

\begin{proof}[Proof of Lemma~\ref{lem.higman's roux}]
We start by checking (R1)--(R4).
First, $B$ satisfies (R1) and (R2) by definition.
For (R3), pick $i\neq j$ and put $g=B_{ij}$.
Then
\[
x_j^{-1}x_i
=(x_i^{-1}x_j)^{-1}
\in(a_{g}HbH)^{-1}
=Hb^{-1}Ha_{g^{-1}}
\stackrel{(*)}{=}HbHa_{g^{-1}}
=Hba_{g^{-1}}H
\stackrel{(\dagger)}{=}Ha_{g^{-1}}bH,
\]
where $(*)$ applies (H3) and $(\dagger)$ follows from Lemma~\ref{lem.basic higman pair facts}(b).
As such, we have $B_{ji}=g^{-1}$, implying (R3).
To verify (R4), first observe that $[n] \times K/H$ is in bijection with $G/H$ through the mapping $(i,h) \mapsto x_i a_h H$.
Let $\psi\colon \mathbb{C}^{([n]\times K/H)\times ([n] \times K/H)} \to \mathbb{C}^{G/H \times G/H}$ be the corresponding $*$-algebra isomorphism
\[
\psi(M)_{x_ia_hH,x_ja_kH}
= M_{(i,h),(j,k)}
\qquad
(M\in \mathbb{C}^{([n]\times K/H)\times ([n] \times K/H)},\, i,j \in [n],\, h,k \in K/H).
\]
(Here, we identify $\mathbb{C}^{([n]\times K/H)\times ([n] \times K/H)}$ with $\mathbb{C}^{n|K/H|\times n|K/H|}$ in the usual way using the ordering on $K/H$ induced by our indexed coset representatives.)
Pre-composing with $\lceil\cdot\rfloor\colon\mathbb{C}[K/H]^{n\times n}\to\mathbb{C}^{([n]\times K/H) \times ([n] \times K/H)}$ gives an injective $*$-algebra homomorphism of $\mathbb{C}[K/H]^{n\times n}$ into $\mathbb{C}^{G/H\times G/H}$.
Recall the notation from \eqref{eq.schurian adjacency matrices}.
It is straightforward (but annoying) to verify that $\psi(\lceil gI \rfloor) = A_{a_gH}$ and $\psi(\lceil gB \rfloor) = A_{Ha_gbH}$ for all $g \in K/H$.
Considering Lemma~\ref{lem.basic higman pair facts}(a), these images span a $*$-subalgebra $\mathscr{A}(G,H)$ of $\mathbb{C}^{G/H\times G/H}$, and so $\{gI\}_{g\in K/H}$ and $\{gB\}_{g \in K/H}$ span a $*$-algebra in $\mathbb{C}[K/H]^{n\times n}$.
This is (R4).
We conclude that $B$ is a roux, and that the scheme it generates is isomorphic to the Schurian scheme of $(G,H)$.

It remains to compute the roux parameters $\{c_g\}_{g\in K/H}$.
We accomplish this by computing $B^2$.
To this end, we denote $\iota \colon \mathscr{A}(B) \to \mathscr{A}(G,H)$ for the $*$-algebra isomorphism
\[
\iota(gI)
= A_{a_gH},
\qquad
\iota(gB) = A_{Ha_gbH}
\qquad
(g \in K/H),
\]
as above.
Recalling \eqref{eq.isomorphisms}, it is convenient to perform much of this computation in an anti-isomorphic domain:
\[
(\theta\circ\phi\circ\iota)(B)=\frac{1}{|H|}\sum_{x\in HbH}x\in \mathbb{C}[H\backslash G/H].
\]
We have
\[
(\theta\circ\phi\circ\iota)(B^2)
=[(\theta\circ\phi\circ\iota)(B)]^2
=\frac{1}{|H|^2}\sum_{x_1\in HbH}x_1\sum_{x_2\in HbH}x_2
=\frac{1}{|H|^2}\sum_{x_1\in HbH}x_1\sum_{x_2\in Hb^{-1}H}x_2,
\]
where the last step follows from (H3).
Next, consider the action of $H\times H$ on $G$ defined by $(h_1,h_2)\cdot x=h_1xh_2^{-1}$.
The orbits of this action are the double cosets of $H$ in $G$.
As such, we may continue with the help of the orbit-stabilizer theorem:
\[
(\theta\circ\phi\circ\iota)(B^2)
=\frac{|HbH|^2}{|H|^6}\sum_{h_1,h_2\in H}h_1bh_2\sum_{h_3,h_4\in H}h_3b^{-1}h_4
=\frac{|HbH|^2}{|H|^5}\sum_{h_1,h_2,h_3\in H}h_1bh_2b^{-1}h_3,
\]
where the last step changes variables $(h_2h_3,h_4)\mapsto(h_2,h_3)$.
At this point, we observe that $h_1bh_2b^{-1}h_3=x$ precisely when $h_1^{-1}xh_3^{-1}=bh_2b^{-1}$, and so
\begin{align*}
(\theta\circ\phi\circ\iota)(B^2)
&=\frac{|HbH|^2}{|H|^5}\sum_{x\in G}|\{(h_1,h_2,h_3)\in H^3:h_1bh_2b^{-1}h_3=x\}|x\\
&=\frac{|HbH|^2}{|H|^5}\sum_{x\in G}|\{(h_1,h_3)\in H^2:h_1^{-1}xh_3^{-1}\in bHb^{-1}\}|x\\
&=\frac{|HbH|^2}{|H|^5}\sum_{x\in G}\frac{|H|^2}{|HxH|}\cdot|bHb^{-1}\cap HxH|x,
\end{align*}
where the normalization in the last step follows from the orbit-stabilizer theorem, as before.
We now apply $(\theta\circ\phi\circ\iota)^{-1}$ to both sides to get
\begin{align*}
B^2
&=\frac{|HbH|^2}{|H|^4}\bigg(\sum_{g\in K/H}\frac{|H|^2}{|a_gH|}\cdot|bHb^{-1}\cap a_gH|gI+\sum_{g\in K/H}\frac{|H|^2}{|Ha_gbH|}\cdot|bHb^{-1}\cap Ha_gbH|gB\bigg)\\
&=(n-1)I+\sum_{g\in K/H}\frac{n-1}{|H|}\cdot|bHb^{-1}\cap Ha_gbH|gB,
\end{align*}
where the final simplification follows from \eqref{eq.square's diagonal} and Lemma~\ref{lem.basic higman pair facts}(c).
\end{proof}

\begin{proof}[Proof of Theorem~\ref{thm.schurian roux}]
It suffices to prove ($\Rightarrow$), since ($\Leftarrow$) follows immediately from Lemma~\ref{lem.higman's roux}.
Suppose the Schurian scheme of $(G,H)$ is isomorphic to a roux scheme.
By Lemma~\ref{lem.roux scheme characterization}, we equivalently have that the scheme is commutative and its thin radical acts regularly on the other adjacency matrices, at least one of which is symmetric.

First, it is straightforward to show that the thin radical is comprised of the adjacency matrices $\{A_{aH}\}_{aH\in K/H}$ defined in \eqref{eq.schurian adjacency matrices}, where $K=N_G(H)$.
Before proceeding, we make a general observation:
\begin{equation}
\label{eq.general observation}
A_{aH}A_{HxH}=A_{HaxH}
\qquad
\text{for every}
\qquad
a\in K,~x\in G.
\end{equation}
To see this, note that $\phi(A_{aH})=\frac{1}{|H|}\cdot\mathbf{1}_{aH}$ and $\phi(A_{HxH})=\frac{1}{|H|}\cdot\mathbf{1}_{HxH}$.
Applying the commutativity of our scheme and the fact that $\phi$ is an anti-isomorphism, we have
\[
(\phi(A_{aH}A_{HxH}))(y)
=(\phi(A_{HxH}A_{aH}))(y)
=\frac{1}{|H|^2}(\mathbf{1}_{aH}*\mathbf{1}_{HxH})(y)
=\frac{1}{|H|^2} \sum_{z\in G} \mathbf{1}_{aH}(z) \mathbf{1}_{HxH}(z^{-1} y).
\]
Next, change variables $u = a^{-1} z$ and observe that $au \in aH$ if and only if $u \in H$.
Therefore,
\[
(\phi(A_{aH}A_{HxH}))(y)
=\frac{1}{|H|^2} \sum_{u\in G} \mathbf{1}_{aH}(au) \mathbf{1}_{HxH}(u^{-1} a^{-1} y)
=\frac{1}{|H|^2} \sum_{u\in H} \mathbf{1}_{HxH}(u^{-1} a^{-1} y).
\]
Finally, for $u \in H$ we have $u^{-1} a^{-1} y \in HxH$ if and only if $a^{-1} y \in HxH$, if and only if $y \in aHxH = HaxH$.
As such,
\[
(\phi(A_{aH}A_{HxH}))(y)
=\frac{1}{|H|^2} \sum_{u\in H} \mathbf{1}_{HaxH}(y)
= \frac{1}{|H|} \mathbf{1}_{HaxH}(y)
=(\phi(A_{HaxH}))(y).
\]

By considering the case $x\in K$ in \eqref{eq.general observation}, we have that the thin radical is isomorphic to $K/H$.
Since the scheme is commutative by assumption, this group is abelian, and so we have (H2).
We also have that one of the adjacency matrices $A_{HbH}$ is symmetric by assumption, which is equivalent to $HbH=Hb^{-1}H$, namely, (H3).
Next, take any $a\in K$.
Then \eqref{eq.general observation} and commutativity together give
\[
A_{HabH}
=A_{aH}A_{HbH}
=A_{HbH}A_{aH}
=(A_{a^{-1}H}A_{Hb^{-1}H})^\top
=A_{Ha^{-1}b^{-1}H}^\top
=A_{HbaH}.
\]
As such, $HabH=HbaH$, with which we establish
\[
aba^{-1}
\in Haba^{-1}H
=HabHa^{-1}
=HbaHa^{-1}
=HbH,
\]
i.e., (H4).
For (H5), take any $a\in K$ such that $ab\in HbH$.
Then \eqref{eq.general observation} gives
\[
A_{aH}A_{HbH}
=A_{HabH}
=A_{HbH}.
\]
Recall that by assumption, the thin radical acts regularly on the other adjacency matrices, meaning the stabilizer of $A_{HbH}$ under this action is trivial.
As such, we have $aH=H$, meaning $a\in H$.
Finally, for (H1), take any $x\in G$ and suppose for the moment that $x\not\in K$.
Then $A_{HxH}$ is not thin, and so there exists $a\in K$ such that $A_{aH}A_{HbH}=A_{HxH}$ since the thin radical acts transitively on the non-thin adjacency matrices.
By \eqref{eq.general observation}, this in turn implies $HxH=HabH\subseteq KbH\subseteq KbK$.
Overall, $x\in G$ either belongs to $K$ or $KbK$, meaning $K$ has two double cosets, and therefore $G$ acts doubly transitively on $G/K$.
\end{proof}

Given a commutative association scheme, the primitive idempotents provide an alternative orthogonal basis for the adjacency algebra.
As detailed in Section~\ref{sec.intro}, this basis is particularly important to our pursuit of Gram matrices.
In the case of roux schemes, all of the primitive idempotents can be expressed in terms of the characters of $\Gamma$ and the underlying roux.
Denoting $\hat{\Gamma}$ for the Pontryagin dual group of characters of $\Gamma$, we have the following:

\begin{theorem}
\label{thm.roux primitive idempotents}
Given an $n\times n$ roux $B$ for $\Gamma$ with parameters $\{ c_g \}_{g \in \Gamma}$, the primitive idempotents for the corresponding roux scheme are scalar multiples of
\[
\mathcal{G}_{\alpha}^\epsilon
:=\sum_{g\in\Gamma}\alpha(g)\lceil gI\rfloor+\mu_\alpha^\epsilon\sum_{g\in\Gamma}\alpha(g) \lceil gB \rfloor,
\qquad
\qquad
(\alpha\in\hat\Gamma,~\epsilon\in\{+,-\}),
\]
where $\mu_\alpha^\epsilon$ is defined in terms of the Fourier transform $\hat{c}_\alpha:=\sum_{h\in\Gamma}c_h\overline{\alpha(h)}$ as follows:
\[
\mu_\alpha^\epsilon
=\frac{\hat{c}_\alpha+\epsilon\sqrt{(\hat{c}_\alpha)^2+4(n-1)}}{2(n-1)}.
\]
Furthermore,
\[
d_\alpha^\epsilon
:=\operatorname{rank}(\mathcal{G}_{\alpha}^\epsilon)
=\frac{n}{1+(n-1)(\mu_\alpha^\epsilon)^2}.
\]

Moreover, if $\alpha \in \hat{\Gamma}$, $\mu > 0$, and 
\[ \mathcal{G} := \sum_{g\in\Gamma}\alpha(g)\lceil gI\rfloor+\mu \sum_{g\in\Gamma}\alpha(g) \lceil gB \rfloor \]
satisfies $\mathcal{G}^2 = c \mathcal{G}$ for some $c >0$, then $\mu = \mu_\alpha^\epsilon$ for some $\epsilon \in \{ +, - \}$, and a scalar multiple of $\mathcal{G} = \mathcal{G}_\alpha^\epsilon$ is a primitive idempotent for the roux scheme.
\end{theorem}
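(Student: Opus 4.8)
The plan is to reuse the multiplication table that drives the first half of the theorem, then recover $\mu$ by matching coefficients. Write $r:=|\Gamma|$ and abbreviate $X:=\sum_{g\in\Gamma}\alpha(g)\lceil gI\rfloor$ and $Y:=\sum_{g\in\Gamma}\alpha(g)\lceil gB\rfloor$, so that $\mathcal{G}=X+\mu Y$. Since $\lceil\cdot\rfloor$ is a $*$-algebra homomorphism, $\alpha$ is multiplicative on $\Gamma$, and $\sum_{gh=k}\alpha(g)\alpha(h)=r\,\alpha(k)$ for every $k\in\Gamma$, a direct computation --- using Lemma~\ref{lem.B squared} in the form $B^2=(n-1)I+\sum_{g\in\Gamma}c_g\,gB$ for the last identity --- yields $X^2=rX$, $XY=YX=rY$, and $Y^2=r(n-1)X+r\hat c_\alpha Y$. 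These are exactly the relations used to locate $\mathcal{G}_\alpha^\pm$ in the first part, so I would either cite that computation or redo it in a line.

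Next I would expand $\mathcal{G}^2=(X+\mu Y)^2=r\big(1+(n-1)\mu^2\big)X+r\big(2\mu+\hat c_\alpha\mu^2\big)Y$. The key structural observation is that $X$ is block diagonal for the $n$-fold $r\times r$ block decomposition, with nonzero diagonal blocks, whereas every diagonal block of $Y$ vanishes because $B_{ii}=0$ by (R1); together with $Y\neq 0$ (an off-diagonal block of $Y$ is a unimodular scalar times a nonvanishing character sum of Cayley matrices, using (R2)), this makes $X$ and $Y$ linearly independent in $\mathbb{C}^{rn\times rn}$. Matching the coefficients of $X$ and $Y$ in $\mathcal{G}^2=c\mathcal{G}=cX+c\mu Y$ then forces $c=r\big(1+(n-1)\mu^2\big)$ and $c\mu=r\big(2\mu+\hat c_\alpha\mu^2\big)$; in particular $c\ge r>0$, so the hypothesis $c>0$ is automatic. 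Eliminating $c$ and cancelling the nonzero factor $\mu$ leaves $(n-1)\mu^2-\hat c_\alpha\mu-1=0$, whose two roots are precisely $\mu_\alpha^+$ and $\mu_\alpha^-$.

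To finish, I would fix the sign: the $c_g$ are real with $c_{g^{-1}}=c_g$, so $\hat c_\alpha$ is real, and $\sqrt{\hat c_\alpha^2+4(n-1)}>|\hat c_\alpha|$ (here $n\ge 2$), whence $\mu_\alpha^-<0<\mu_\alpha^+$. Since $\mu>0$, necessarily $\mu=\mu_\alpha^+$, so $\mathcal{G}=\mathcal{G}_\alpha^+$ by definition, and the first part of the theorem then says that $\mathcal{G}_\alpha^+$ is a scalar multiple of a primitive idempotent of the roux scheme. Alternatively, and without invoking the first part: $\tfrac1c\mathcal{G}$ is a nonzero idempotent of the two-dimensional subalgebra $\mathbb{C}X\oplus\mathbb{C}Y\subseteq\mathscr{A}(B)$, which has identity $\tfrac1rX$; since $\mu>0$ it is not $\tfrac1rX$, hence it is one of the two primitive idempotents of that block, and therefore of the roux scheme.

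I expect the main obstacle to be bookkeeping rather than anything conceptual: the expansion of $\mathcal{G}^2$ is forced by Lemma~\ref{lem.B squared}, and the only steps needing real care are the linear independence of $X$ and $Y$ --- which is exactly where axiom (R1) enters, through the hollow block structure of $Y$ --- and the sign analysis that discards the root $\mu_\alpha^-$.
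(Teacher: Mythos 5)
Your coefficient-matching argument for the final (``Moreover'') clause is correct and is essentially the paper's own: the paper factors $\mathcal{G}=M_1M_2$ with $M_1=\sum_{g\in\Gamma}\alpha(g)\lceil gI\rfloor$ and $M_2=\lceil I\rfloor+\mu\lceil B\rfloor$ and arrives at exactly the expansion $\mathcal{G}^2=r\bigl(1+(n-1)\mu^2\bigr)X+r\bigl(2+\hat{c}_\alpha\mu\bigr)\mu Y$ (with $r=|\Gamma|$) that you obtain from $(X+\mu Y)^2$, and then matches coefficients against $cX+c\mu Y$. Your justification of the linear independence of $X$ and $Y$ via the hollow block structure coming from (R1) is a detail the paper leaves implicit, and your observations that the cancellation of $\mu$ is where $\mu>0$ enters and that in fact $\mu_\alpha^-<0<\mu_\alpha^+$ forces $\mu=\mu_\alpha^+$ are both valid (the latter is sharper than what the theorem asserts, and follows since $\hat{c}_\alpha$ is real and the product of the two roots of $(n-1)\mu^2-\hat{c}_\alpha\mu-1$ is $-1/(n-1)<0$).

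As a proof of the full statement, however, the proposal is incomplete: it defers the entire first half of the theorem to ``the first part,'' and that half is not recovered in a line from the multiplication table $X^2=rX$, $XY=rY$, $Y^2=r(n-1)X+r\hat{c}_\alpha Y$. Beyond the quadratic $1+(n-1)\mu^2=2+\hat{c}_\alpha\mu$, the paper must still show (i) that each $\mathcal{G}_\alpha^\epsilon$ is self-adjoint, (ii) that the $2r$ matrices $\mathcal{G}_\alpha^\epsilon$ annihilate one another pairwise --- for $\alpha\neq\beta$ this uses orthogonality of characters, and for $\mathcal{G}_\alpha^+\mathcal{G}_\alpha^-$ it uses the Vieta relations $\mu_\alpha^++\mu_\alpha^-=\hat{c}_\alpha/(n-1)$ and $\mu_\alpha^+\mu_\alpha^-=-1/(n-1)$ together with the formula for $B^2$ --- so that primitivity follows from a dimension count, and (iii) the rank formula via a trace computation. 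Relatedly, your ``alternative ending'' jumps from ``primitive idempotent of the two-dimensional block $\mathbb{C}X\oplus\mathbb{C}Y$'' to ``primitive idempotent of the roux scheme''; that step is not free, since it requires knowing that the adjacency algebra of the roux scheme decomposes as the direct sum over $\alpha\in\hat\Gamma$ of these blocks, which is precisely the character-orthogonality computation from the first half. None of this is a wrong turn, but it must be written out (or the first half explicitly invoked as proved) before the proof of the theorem is complete.
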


Notice that $\mathcal{G}_\alpha^\epsilon$ is the Gram matrix of $|\Gamma|$ phased versions of all $n$ vectors of an ETF in $\mathbb{C}^{d_\alpha^\epsilon}$ with coherence $|\mu_\alpha^\epsilon|$.
Expanding $d_\alpha^\epsilon$ in terms of the definition of $\mu_\alpha^\epsilon$ gives
\[
d_\alpha^\epsilon
=\frac{2n(n-1)}{(\hat{c}_\alpha)^2+4(n-1)+\epsilon\hat{c}_\alpha\sqrt{(\hat{c}_\alpha)^2+4(n-1)}}.
\]
By appearances, it seems that $d_\alpha^\epsilon\in\mathbb{Z}$ is a strong necessary condition for the existence of roux.

\begin{proof}[Proof of Theorem~\ref{thm.roux primitive idempotents}]
First, we establish that $\mathcal{G}$ is a scalar multiple of an idempotent if and only if $\mu = \mu_\alpha^\epsilon$.
To this end, it is helpful to write
\[
\mathcal{G}
=\bigg(\sum_{g\in\Gamma}\alpha(g)\lceil gI\rfloor\bigg)\bigg(\lceil I\rfloor+\mu\lceil B \rfloor\bigg)
=:M_1M_2.
\]
If we put $r=|\Gamma|$, then
\[
M_1^2
=r\sum_{g\in\Gamma}\alpha(g)\lceil gI\rfloor,
\qquad
M_2^2
=\Big(1+(n-1)\mu^2\Big)\lceil I\rfloor+2\mu\lceil B\rfloor+\mu^2\bigg(\sum_{g\in\Gamma} c_g\lceil gI\rfloor\bigg)\lceil B\rfloor.
\]
Indeed, $M_1^2$ is computed by a change of variables, whereas $M_2^2$ is computed by applying the formula for $B^2$ in terms of the roux parameters.
With this, we may compute $\mathcal{G}^2=M_1^2M_2^2$, the third term of which is $r\mu^2M_3\lceil B\rfloor$, where
\[
M_3
=\bigg(\sum_{g\in\Gamma}\alpha(g)\lceil gI\rfloor\bigg)\bigg(\sum_{g\in\Gamma} c_g\lceil gI\rfloor\bigg)
=\sum_{g,h\in\Gamma} c_g\alpha(h)\lceil ghI\rfloor
=\sum_{g,h\in\Gamma} c_h\alpha(gh^{-1})\lceil gI\rfloor
=\hat{c}_\alpha\sum_{g\in\Gamma}\alpha(g)\lceil gI\rfloor.
\]
Putting everything together, we have
\[
\mathcal{G}^2
=r\Big(1+(n-1)\mu^2\Big)\cdot\sum_{g\in\Gamma}\alpha(g)\lceil gI\rfloor+r\Big(2+\hat{c}_\alpha\mu\Big)\cdot\mu\sum_{g\in \Gamma}\alpha(g)\lceil gB\rfloor.
\]
By linear independence, $\mathcal{G}^2$ is a scalar multiple of $\mathcal{G}$ if and only if $1+(n-1)\mu^2=2+\hat{c}_\alpha\mu$, if and only if $\mu = \mu_\alpha^\epsilon$.
Notice that $\mu_\alpha^\epsilon$ is real since $c_{g^{-1}}=c_g$ for every $g\in\Gamma$ (see Lemma~\ref{lem.B squared}), and $\lceil B \rfloor$ is symmetric since $B$ is self-adjoint, and so $M_2^*=M_2^\top=M_2$.
Also, a change of variables gives that $M_1^*=M_1$, and so $(\mathcal{G}_{\alpha}^\epsilon)^*=\mathcal{G}_{\alpha}^\epsilon$.
Hence, $\mathcal{G}_{\alpha}^\epsilon$ is a scalar multiple of an orthogonal projection matrix.

Next, we show that $\mathcal{G}_{\alpha}^\epsilon$ is a scalar multiple of a \textit{primitive} idempotent.
Since the dimension of $\mathscr{A}(B)$ is $2r$ and there are $2r$ different $\mathcal{G}_{\alpha}^\epsilon$'s, it suffices to show that $\{\mathcal{G}_{\alpha}^\epsilon\}$ are mutually orthogonal.
To this end, take $\alpha,\beta\in\hat\Gamma$ and $\epsilon,\delta\in\{+,-\}$.
We will proceed in two cases.
First, suppose $\alpha\neq\beta$.
Then
\begin{equation}
\label{eq.orthog helper}
\sum_{g,h\in\Gamma}\alpha(g)\beta(h)\lceil gI\rfloor\lceil hI\rfloor
=\sum_{g,k\in\Gamma}\alpha(g)\beta(g^{-1}k)\lceil kI\rfloor
=\bigg(\sum_{g\in\Gamma}\alpha(g)\overline{\beta(g)}\bigg)\bigg(\sum_{k\in\Gamma}\beta(k)\lceil kI\rfloor\bigg)
=0,
\end{equation}
where the last step is by the orthogonality of characters.
As such,
\[
\mathcal{G}_{\alpha}^\epsilon\mathcal{G}_{\beta}^\delta
=\bigg(\sum_{g\in\Gamma}\alpha(g)\lceil gI\rfloor\bigg)\bigg(\lceil I\rfloor+\mu_\alpha^\epsilon\lceil B \rfloor\bigg)\bigg(\sum_{h\in\Gamma}\beta(h)\lceil hI\rfloor\bigg)\bigg(\lceil I\rfloor+\mu_\beta^\delta\lceil B \rfloor\bigg)
=0,
\]
where the last step follows from exploiting commutativity to multiply the first and third factors and then applying \eqref{eq.orthog helper}.
This completes the first case.
It remains to show that $\mathcal{G}_{\alpha}^+\mathcal{G}_{\alpha}^-=0$ for every $\alpha\in\hat\Gamma$.
To this end, fix $\alpha\in\hat\Gamma$ and note that
\[
\mu_\alpha^++\mu_\alpha^-
=\frac{\hat{c}_\alpha}{n-1},
\qquad
\mu_\alpha^+\mu_\alpha^-
=-\frac{1}{n-1}.
\]
Combining this with the expression for $B^2$ then gives
\[
\bigg(\lceil I\rfloor+\mu_\alpha^+\lceil B \rfloor\bigg)\bigg(\lceil I\rfloor+\mu_\alpha^-\lceil B \rfloor\bigg)
=\lceil I\rfloor+\frac{\hat{c}_\alpha}{n-1}\lceil B \rfloor-\frac{1}{n-1}\lceil B \rfloor^2
=\frac{1}{n-1}\lceil B\rfloor\bigg(\hat{c}_\alpha\lceil I\rfloor-\sum_{g\in\Gamma}c_g\lceil gI\rfloor\bigg).
\]
With this, we compute the desired product:
\begin{align*}
\mathcal{G}_{\alpha}^+\mathcal{G}_{\alpha}^-
&=\frac{1}{n-1}\lceil B\rfloor\bigg(\sum_{g\in\Gamma}\alpha(g)\lceil gI\rfloor\bigg)^2\bigg(\hat{c}_\alpha\lceil I\rfloor-\sum_{g\in\Gamma}c_g\lceil gI\rfloor\bigg)\\
&=\frac{1}{n-1}\lceil B\rfloor\bigg(\sum_{g\in\Gamma}\alpha(g)\lceil gI\rfloor\bigg)\bigg(\sum_{g,h\in\Gamma}\alpha(gh^{-1})c_h\lceil gI\rfloor-\sum_{g,h\in\Gamma}\alpha(g)c_h\lceil ghI\rfloor\bigg)
=0,
\end{align*}
where the last step follows from a change of variables.

At this point, we have that the primitive idempotents of $\mathscr{A}(B)$ are given by
\[
\frac{1}{r(1+(n-1)(\mu_\alpha^\epsilon)^2)}\cdot\mathcal{G}_\alpha^\epsilon,
\qquad
(\alpha\in\hat\Gamma,~\epsilon\in\{+,-\}).
\]
For the last claim, we need to compute the ranks of these idempotents, which amounts to a trace calculation.
To this end, we isolate the diagonal contribution to $\mathcal{G}_\alpha^\epsilon$ to get $\operatorname{tr}(\mathcal{G}_\alpha^\epsilon)=\operatorname{tr}(\lceil I\rfloor)=rn$, from which the formula for $d_\alpha^\epsilon$ follows.
\end{proof}

\section{Roux lines}
\label{sec.roux lines}

Given unit-norm representatives of linearly dependent equiangular lines, the Gram matrix of these vectors has the form $I+\mu \mathcal{S}$ for some $\mu>0$ and signature matrix $\mathcal{S}$.
As discussed in Subsection~\ref{subsec.example main results}, we call the equiangular lines \textit{roux} if there exist unit-norm representatives such that the signature matrix $\mathcal{S}$ can be obtained by evaluating some roux at a character.
This operation of evaluating at a character requires some notation:
Every $\alpha\in\hat\Gamma$ extends to a $*$-algebra homomorphism $\tilde\alpha\colon\mathbb{C}[\Gamma]\to\mathbb{C}$, which in turn extends to a $*$-algebra homomorphism $\hat\alpha\colon\mathbb{C}[\Gamma]^{n\times n}\to\mathbb{C}^{n\times n}$ given by applying $\tilde\alpha$ entrywise.

\begin{theorem}
\label{thm.roux signature}
Suppose $B\in\mathbb{C}[\Gamma]^{n\times n}$ satisfies (R1)--(R3).
Then $B$ is a roux if and only if for every $\alpha\in\hat{\Gamma}$, $\hat\alpha(B)$ is the signature matrix of an equiangular tight frame.
\end{theorem}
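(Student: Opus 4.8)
The plan is to route both directions through Lemma~\ref{lem.B squared}, together with the classical characterization of ETF signature matrices: a signature matrix $\mathcal{S}$ of order $n$ (self-adjoint, zero diagonal, unimodular off-diagonal) is the signature matrix of an equiangular tight frame if and only if it has exactly two distinct eigenvalues, equivalently if and only if $\mathcal{S}^2$ lies in $\operatorname{span}\{I,\mathcal{S}\}$. Comparing diagonals and using $(\mathcal{S}^2)_{ii}=\sum_j|\mathcal{S}_{ij}|^2=n-1$, this last condition is the same as $\mathcal{S}^2=(n-1)I+b\,\mathcal{S}$ for a scalar $b$; since $\mathcal{S}\neq 0$ is traceless, its two eigenvalues are then distinct and of opposite sign, so $\mathcal{S}-\lambda_{\min}I$ is a positive multiple of an orthogonal projection with unit diagonal and constant-modulus off-diagonal, i.e.\ an ETF Gram matrix (and conversely). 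A preliminary observation used in both directions is that whenever $B$ satisfies (R1)--(R3), the matrix $\hat\alpha(B)$ is automatically a signature matrix for each $\alpha\in\hat\Gamma$: (R1) kills the diagonal, (R2) makes the off-diagonal entries $\alpha(B_{ij})$ unimodular, and (R3) together with $\tilde\alpha(g^{-1})=\overline{\alpha(g)}$ makes it self-adjoint; moreover $\hat\alpha(B)\neq 0$, so ``at most two eigenvalues'' upgrades to ``exactly two''.

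For the forward implication, assume $B$ is a roux. Lemma~\ref{lem.B squared} gives $B^2=(n-1)I+\sum_{g\in\Gamma}c_g\,gB$; applying the $*$-algebra homomorphism $\hat\alpha$ entrywise and using $\hat\alpha(gB)=\alpha(g)\,\hat\alpha(B)$ yields $\hat\alpha(B)^2=(n-1)I+\big(\sum_{g}c_g\alpha(g)\big)\hat\alpha(B)\in\operatorname{span}\{I,\hat\alpha(B)\}$, so $\hat\alpha(B)$ is the signature matrix of an ETF by the characterization above.

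For the converse, suppose $\hat\alpha(B)$ is the signature matrix of an ETF for every $\alpha\in\hat\Gamma$, so that $\hat\alpha(B)^2=(n-1)I+b_\alpha\,\hat\alpha(B)$ for some scalar $b_\alpha$. The diagonal computation \eqref{eq.square's diagonal} --- which uses only (R1)--(R3) --- shows that the diagonal of $B^2$ agrees with that of $(n-1)I$, so it remains to see that the off-diagonal part of $B^2$ lies in $\operatorname{span}\{gB\}_{g\in\Gamma}$; equivalently, that $(B^2)_{ij}(B_{ij})^{-1}\in\mathbb{C}[\Gamma]$ is independent of the choice of $i\neq j$. Since the characters of $\Gamma$ separate points of $\mathbb{C}[\Gamma]$, it suffices to check for each $\alpha$ that $\tilde\alpha\big((B^2)_{ij}(B_{ij})^{-1}\big)$ does not depend on $i\neq j$. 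But multiplicativity of $\hat\alpha$ gives $\tilde\alpha\big((B^2)_{ij}\big)=\big(\hat\alpha(B)^2\big)_{ij}=b_\alpha\,\hat\alpha(B)_{ij}$ for $i\neq j$, while $\tilde\alpha\big((B_{ij})^{-1}\big)=\alpha(B_{ij})^{-1}=\hat\alpha(B)_{ij}^{-1}$, so the product equals $b_\alpha$ for all $i\neq j$. Writing the common value of $(B^2)_{ij}(B_{ij})^{-1}$ as $\sum_{g}c_g g$, we obtain $B^2=(n-1)I+\sum_g c_g\,gB$, and Lemma~\ref{lem.B squared} then shows that $B$ is a roux.

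There is no serious obstacle here. The only points requiring care are purely bookkeeping --- separating the diagonal of $B^2$ (controlled by (R1)--(R3) alone) from the off-diagonal, and observing that $|\alpha(B_{ij})|=1$ forces the scalar multiple of $I$ in $\hat\alpha(B)^2$ to be exactly $n-1$ --- together with invoking the two-eigenvalue characterization of ETF signature matrices, which is classical and already implicit in the discussion of signature matrices in Section~\ref{sec.preliminaries and main results}.
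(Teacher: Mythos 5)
Your proof is correct. The forward direction is identical to the paper's: apply $\hat\alpha$ to the identity of Lemma~\ref{lem.B squared} and invoke the two-eigenvalue characterization of ETF signature matrices. For the converse, your route differs in execution from the paper's, though both ultimately rest on Fourier duality over $\hat\Gamma$. The paper expands $\lceil B\rfloor=\sum_{\alpha\in\hat\Gamma}\hat\alpha(B)\otimes v_\alpha v_\alpha^*$ and verifies the identity $B^2=(n-1)I+\sum_g c_g\,gB$ by explicit entrywise computation in the $rn\times rn$ picture, defining the parameters $c_g$ as the inverse Fourier transform of the scalars $C_\alpha$. You instead stay inside $\mathbb{C}[\Gamma]^{n\times n}$: you reduce the required identity to the statement that $(B^2)_{ij}(B_{ij})^{-1}\in\mathbb{C}[\Gamma]$ is independent of the off-diagonal pair $(i,j)$, and verify this by applying each character $\tilde\alpha$ (which separates points of $\mathbb{C}[\Gamma]$ and is multiplicative) to obtain the constant value $b_\alpha$. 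This is a clean and slightly more intrinsic argument; it avoids the $\lceil\cdot\rfloor$ machinery and the tensor decomposition entirely, at the mild cost of not producing the formula $\hat c_\alpha=C_{\alpha^{-1}}$ for the roux parameters explicitly (the paper's computation hands you this for free, and it is used later, e.g.\ in Corollary~\ref{cor.drackn lines detector}). All the supporting claims you make --- that $\hat\alpha(B)$ is automatically a signature matrix under (R1)--(R3), that the diagonal of $(\hat\alpha(B))^2$ is forced to be $n-1$, and the equivalence between the ETF property and the quadratic relation for a traceless signature matrix --- check out.
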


See Theorem~\ref{thm.drackn vs roux} for the combinatorial significance of (R1)--(R3).

\begin{proof}[Proof of Theorem~\ref{thm.roux signature}]
($\Rightarrow$)
Put $\mathcal{S}=\hat\alpha(B)$.
Then $\mathcal{S}=\mathcal{S}^*$ and Lemma~\ref{lem.B squared} gives
\begin{equation}
\label{eq.roux signature matrix}
\mathcal{S}^2
=(n-1)I+\bigg(\sum_{g\in\Gamma}c_g\alpha(g)\bigg)\mathcal{S}.
\end{equation}
As such, $\mathcal{S}$ has at most two eigenvalues $\lambda_1\geq\lambda_2$, and since $\operatorname{tr}(\mathcal{S})=0$, we have $\lambda_1>0>\lambda_2$.
By the spectral theorem, $(\mathcal{S}-\lambda_2I)/(\lambda_1-\lambda_2)$ is an orthogonal projection with off-diagonal of constant modulus.

($\Leftarrow$)
By Lemma~\ref{lem.B squared}, it suffices to compute $\lceil B\rfloor^2$.
Since
\[
\lceil B_{ij}\rfloor_{g,h}
=\left\{\begin{array}{ll}1&\text{if }B_{ij}h=g\\0&\text{otherwise}\end{array}\right\}
=\frac{1}{|\hat\Gamma|}\sum_{\alpha\in\hat\Gamma}\alpha(B_{ij}hg^{-1}),
\]
we may decompose $\lceil B\rfloor$ as follows:
\[
\lceil B\rfloor=\sum_{\alpha\in\hat\Gamma}\hat\alpha(B)\otimes v_\alpha v_\alpha^*,
\qquad
(v_\alpha)_g:=\frac{1}{\sqrt{|\Gamma|}}\cdot\overline{\alpha(g)}.
\]
This decomposition provides a useful expression for $\lceil B\rfloor^2$:
\begin{equation}
\label{eq.etf to roux 1}
\lceil B\rfloor^2
=\sum_{\alpha,\beta\in\hat\Gamma}\hat\alpha(B)\hat\beta(B)\otimes v_\alpha v_\alpha^*v_\beta v_\beta^*
=\sum_{\alpha\in\hat\Gamma}(\hat\alpha(B))^2\otimes v_\alpha v_\alpha^*.
\end{equation}
Next, since $\hat\alpha(B)$ is the signature matrix of an ETF by assumption, it necessarily has exactly two eigenvalues.
Furthermore, (R1)--(R3) together imply that every diagonal entry of $(\hat\alpha(B))^2$ is $n-1$, and so we may write
\begin{equation}
\label{eq.etf to roux 2}
(\hat\alpha(B))^2
=(n-1)I+C_\alpha\cdot\hat\alpha(B),
\qquad
(\alpha\in\hat\Gamma),
\end{equation}
for some sequence $\{C_\alpha\}_{\alpha\in\hat\Gamma}$ in $\mathbb{C}$.
Consider the sequence $\{c_g\}_{g\in\Gamma}$ whose Fourier transform is given by $\hat{c}_\alpha=C_{\alpha^{-1}}$ for $\alpha\in\hat\Gamma$.
We combine this with \eqref{eq.etf to roux 2} to continue \eqref{eq.etf to roux 1}:
\begin{equation}
\label{eq.etf to roux 3}
\lceil B\rfloor^2
=(n-1)\lceil I\rfloor+\sum_{\alpha\in\hat\Gamma}\hat{c}_{\alpha^{-1}}\cdot\hat\alpha(B)\otimes v_\alpha v_\alpha^*
=:(n-1)\lceil I\rfloor+M,
\end{equation}
where the first term follows from the fact that $\sum_{\alpha\in\hat\Gamma}v_\alpha v_\alpha^*=I$.
By (R1), we have that $M_{(i,g),(j,h)}=0$ whenever $i=j$.
For $i\neq j$, we have
\[
M_{(i,g),(j,h)}
=\sum_{\alpha\in\hat\Gamma}\hat{c}_{\alpha^{-1}}\cdot\alpha(B_{ij})(v_\alpha)_g \overline{(v_\alpha)_h}
=\frac{1}{|\hat\Gamma|}\sum_{\alpha\in\hat\Gamma}\hat{c}_\alpha\alpha(B_{ij}^{-1}gh^{-1})
=c_{B_{ij}^{-1}gh^{-1}},
\]
which matches the desired sum:
\[
\bigg(\sum_{k\in\Gamma}c_k\lceil kB\rfloor\bigg)_{(i,g),(j,h)}
=\sum_{k\in\Gamma}c_k\lceil kB_{ij}\rfloor_{g,h}
=\sum_{k\in\Gamma}c_k\left\{\begin{array}{ll}1&\text{if }kB_{ij}h=g\\0&\text{otherwise}\end{array}\right\}
=c_{B_{ij}^{-1}gh^{-1}}.
\]
Overall, $M=\sum_{g\in\Gamma}c_g\lceil gB\rfloor$, and so \eqref{eq.etf to roux 3} and Lemma~\ref{lem.B squared} together give that $B$ is a roux with parameters $\{c_g\}_{g\in\Gamma}$.
\end{proof}

Recalling the primitive idempotents in Theorem~\ref{thm.roux primitive idempotents}, we see that $(\mathcal{G}_{\alpha^{-1}}^+)_{(i,1),(j,1)}=\mu_{\alpha^{-1}}^+\cdot\alpha(B_{ij})$ whenever $i\neq j$.
This implies a fundamental relationship: 

\begin{lemma}
\label{lem.two reps of same lines}
For each $\alpha\in\hat\Gamma$, the signature matrix $\hat{\alpha}(B)$ from Theorem~\ref{thm.roux signature} and the Gram matrix $\mathcal{G}_{\alpha^{-1}}^+$ from Theorem~\ref{thm.roux primitive idempotents} describe the same lines (each line implicated by the former is represented $|\Gamma|$ times in the latter).
\end{lemma}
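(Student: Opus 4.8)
The plan is to collapse the defining expression for $\mathcal{G}_{\alpha^{-1}}^+$ into a single Kronecker product and then read off a synthesis operator. Recall from the proof of Theorem~\ref{thm.roux signature} that, writing $r=|\Gamma|$ and $(v_\beta)_g:=\tfrac{1}{\sqrt r}\overline{\beta(g)}$ for $\beta\in\hat\Gamma$, one has $\lceil B\rfloor=\sum_{\beta\in\hat\Gamma}\hat\beta(B)\otimes v_\beta v_\beta^*$; since the $v_\beta$ also diagonalize each Cayley matrix, $\lceil gI\rfloor=I_n\otimes\sum_{\beta}\beta(g)v_\beta v_\beta^*$ and hence $\lceil gB\rfloor=\lceil gI\rfloor\lceil B\rfloor=\sum_{\beta}\beta(g)\,\hat\beta(B)\otimes v_\beta v_\beta^*$. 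Summing each of these against $\alpha^{-1}(g)$ over $g\in\Gamma$ and invoking orthogonality of characters, every term with $\beta\neq\alpha$ dies, leaving $\sum_g\alpha^{-1}(g)\lceil gI\rfloor=r\,I_n\otimes v_\alpha v_\alpha^*$ and $\sum_g\alpha^{-1}(g)\lceil gB\rfloor=r\,\hat\alpha(B)\otimes v_\alpha v_\alpha^*$. Therefore
\[
\mathcal{G}_{\alpha^{-1}}^+ = r\bigl(I_n+\mu_{\alpha^{-1}}^+\,\hat\alpha(B)\bigr)\otimes v_\alpha v_\alpha^*.
\]
That the surviving character is $\alpha$ rather than $\alpha^{-1}$ is precisely what orthogonality forces, and is exactly why the lemma pairs $\hat\alpha(B)$ with $\mathcal{G}_{\alpha^{-1}}^+$.

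Next I would identify the left tensor factor with an ETF Gram matrix for $\hat\alpha(B)$. By \eqref{eq.roux signature matrix} the eigenvalues of $\hat\alpha(B)$ are the roots of $\lambda^2-\hat c_{\alpha^{-1}}\lambda-(n-1)=0$ (using $\sum_g c_g\alpha(g)=\hat c_{\alpha^{-1}}$), one positive and one negative root $\lambda_-<0$; a short computation gives $-1/\lambda_-=\mu_{\alpha^{-1}}^+$, so $I_n+\mu_{\alpha^{-1}}^+\hat\alpha(B)=I_n-\tfrac1{\lambda_-}\hat\alpha(B)$ is exactly the Gram matrix of unit-norm representatives $\varphi_1,\dots,\varphi_n$ of the equiangular tight frame with signature matrix $\hat\alpha(B)$ (the ETF supplied by Theorem~\ref{thm.roux signature}). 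Letting $\Phi$ be the $d_{\alpha^{-1}}^+\times n$ synthesis operator with these columns, so $\Phi^*\Phi=I_n+\mu_{\alpha^{-1}}^+\hat\alpha(B)$, the displayed identity becomes $\mathcal{G}_{\alpha^{-1}}^+=r(\Phi\otimes v_\alpha^*)^*(\Phi\otimes v_\alpha^*)$; that is, a synthesis operator for $\mathcal{G}_{\alpha^{-1}}^+$ has columns indexed by $(i,g)\in[n]\times\Gamma$, the $(i,g)$ column being the unimodular multiple $\sqrt r\,\overline{(v_\alpha)_g}\,\varphi_i=\alpha(g)\,\varphi_i$ of $\varphi_i$.

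This finishes the argument: the lines described by $\mathcal{G}_{\alpha^{-1}}^+$ are $\{\operatorname{span}\varphi_i\}_{i\in[n]}$, i.e.\ exactly the lines described by $\hat\alpha(B)$, and for each such line the index $i$ is paired with a full copy of $\Gamma$, so the line occurs $|\Gamma|$ times. An alternative route avoids the tensor identity entirely: the observation just before the lemma shows the principal submatrix of $\mathcal{G}_{\alpha^{-1}}^+$ on indices $\{(i,1)\}_{i\in[n]}$ is $I_n+\mu_{\alpha^{-1}}^+\hat\alpha(B)$, while a one-line entry computation gives $(\mathcal{G}_{\alpha^{-1}}^+)_{(i,g),(i,h)}=\alpha^{-1}(gh^{-1})$ with all diagonal entries equal to $1$, so equality in Cauchy--Schwarz forces, for each fixed $i$, all $|\Gamma|$ vectors indexed by $(i,\cdot)$ onto a single line; the same quadratic computation then matches those $n$ lines with the ETF of $\hat\alpha(B)$. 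Either way, the only point demanding care is tracking $\alpha$ versus $\alpha^{-1}$ through the character sums and through \eqref{eq.roux signature matrix}; the remainder is routine linear algebra, and I expect this bookkeeping to be the sole (mild) obstacle.
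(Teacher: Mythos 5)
Your proposal is correct, and your primary route is a genuinely different (and more self-contained) argument than the one the paper gestures at. The paper's entire justification is the sentence preceding the lemma: the principal submatrix of $\mathcal{G}_{\alpha^{-1}}^+$ on the indices $\{(i,1)\}_{i\in[n]}$ has off-diagonal entries $\mu_{\alpha^{-1}}^+\alpha(B_{ij})$ and unit diagonal, so it is $I+\mu_{\alpha^{-1}}^+\hat\alpha(B)$ with $\mu_{\alpha^{-1}}^+>0$ --- which is exactly your ``alternative route,'' including the Cauchy--Schwarz observation (from $(\mathcal{G}_{\alpha^{-1}}^+)_{(i,g),(i,h)}=\alpha^{-1}(gh^{-1})$ of modulus one) that collapses each fibre $(i,\cdot)$ onto a single line. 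Your main route instead reuses the spectral decomposition $\lceil B\rfloor=\sum_\beta\hat\beta(B)\otimes v_\beta v_\beta^*$ from the proof of Theorem~\ref{thm.roux signature} and character orthogonality to obtain the global factorization $\mathcal{G}_{\alpha^{-1}}^+=r\bigl(I_n+\mu_{\alpha^{-1}}^+\hat\alpha(B)\bigr)\otimes v_\alpha v_\alpha^*$; this buys an explicit synthesis operator whose $(i,g)$ column is $\alpha(g)\varphi_i$, so the ``each line appears $|\Gamma|$ times'' claim is read off directly rather than inferred fibre by fibre. All the supporting computations check out: the surviving character under $\sum_g\alpha^{-1}(g)(\cdot)$ is indeed $\alpha$, the identity $\sum_gc_g\alpha(g)=\hat c_{\alpha^{-1}}$ is consistent with the paper's convention $\hat c_\beta=\sum_hc_h\overline{\beta(h)}$, and $-1/\lambda_-=\lambda_+/(n-1)=\mu_{\alpha^{-1}}^+$ follows from $\lambda_+\lambda_-=-(n-1)$, which also guarantees the positive semidefiniteness needed to extract $\Phi$.
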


In fact, this relationship can be used to characterize roux lines (see Corollary~\ref{cor.roux lines detector} for a nicer characterization):

\begin{theorem}
\label{thm.grampa's roux}
Let $\mathscr{L}$ be a sequence of linearly dependent complex lines.
Then $\mathscr{L}$ is roux if and only if all of the following occur simultaneously:
\begin{itemize}
\item[(a)]
$\mathscr{L}$ is equiangular,
\item[(b)]
there exist unit-norm representatives $\{\varphi_i\}_{i\in[n]}$ of $\mathscr{L}$ whose signature matrix is comprised of $r$th roots of unity for some $r$, and
\item[(c)]
the Gram matrix $\mathcal{G} \in \mathbb{C}^{n\times n}$ of $\{g\varphi_i\}_{i\in[n],g\in C_r \subseteq \mathbb{C}}$ carries an association scheme.
\end{itemize}
In this case, there is a roux $B$ such that $\mathscr{L}$ are roux lines for $B$, the association scheme carried by $\mathcal{G}$ is the roux scheme of $B$, and a scalar multiple of $\mathcal{G}$ is a primitive idempotent for that scheme.
Moreover, $\{\varphi_i\}_{i\in[n]}$ is an equiangular tight frame for its span.
\end{theorem}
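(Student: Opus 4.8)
The plan is to route both implications through Theorem~\ref{thm.roux signature} and Theorem~\ref{thm.roux primitive idempotents}, after using Lemma~\ref{lem.basic roux trans}(c) to pass to a convenient normal form. Recall that a roux line sequence is, by definition, witnessed by a roux $B$ for some abelian $\Gamma$ and a character $\alpha\in\hat\Gamma$; pushing $B$ through the surjection $\alpha\colon\Gamma\to C_r:=\alpha(\Gamma)$ via Lemma~\ref{lem.basic roux trans}(c) replaces $(\Gamma,\alpha,B)$ by $\bar\alpha(B)$ over $C_r$ evaluated at the inclusion character $C_r\hookrightarrow\mathbb{C}$, without changing the resulting signature matrix. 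So I may assume $\Gamma=C_r\subseteq\mathbb{T}$, that $\alpha$ is the inclusion character, and that the signature matrix in question is literally $\mathcal{S}=\hat\alpha(B)$, whose off-diagonal entries are $r$th roots of unity. The computational hinge is the elementary identity
\[
\mathcal{G}_{(i,g),(j,h)}=\alpha^{-1}(gh^{-1})\,(I+\mu\mathcal{S})_{ij}
\qquad(i,j\in[n],\ g,h\in C_r),
\]
for the Gram matrix $\mathcal{G}$ of $\{g\varphi_i\}_{i\in[n],g\in C_r}$, where $\mu$ is the coherence; this just records $\langle h\varphi_j,g\varphi_i\rangle=\overline{g}h\langle\varphi_j,\varphi_i\rangle$. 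In particular $\mathcal{G}$ depends on $g,h$ only through $gh^{-1}$, hence is the $\lceil\cdot\rfloor$-image of a matrix over $\mathbb{C}[C_r]$, and comparing with the formula in Theorem~\ref{thm.roux primitive idempotents} (and the remark preceding Lemma~\ref{lem.two reps of same lines}) — after checking via the quadratic in \eqref{eq.roux signature matrix} that the coherence $\mu$ equals $\mu_{\alpha^{-1}}^+$ — identifies $\mathcal{G}=\mathcal{G}_{\alpha^{-1}}^+$.

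For ($\Rightarrow$): suppose $\mathscr{L}$ is roux and fix unit-norm representatives $\{\varphi_i\}$ and a roux $B$ for $C_r$ as above. Then (b) is immediate, and (a) holds since the Gram matrix $I+\mu\mathcal{S}$ of $\{\varphi_i\}$ has off-diagonal entries of constant modulus $\mu$. For (c), identify $\mathcal{G}=\mathcal{G}_{\alpha^{-1}}^+$ as above; by Theorem~\ref{thm.roux primitive idempotents} this is a positive scalar multiple of a primitive idempotent of the roux scheme of $B$, so it suffices to compute its level sets. Linear dependence forces $\mu<1$ (if $\mu=1$ the unit-norm $\varphi_i$ would be mutually proportional, so $\mathscr{L}$ would be a repetition of a single line, which is not linearly dependent), so the entries on the diagonal blocks have modulus $1$ and the rest have modulus $\mu<1$; a short bookkeeping argument then identifies the level sets of $\mathcal{G}$ with exactly the $2r$ adjacency matrices $\{\lceil gI\rfloor\}_{g\in C_r}\cup\{\lceil gB\rfloor\}_{g\in C_r}$ of the roux scheme. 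Hence $\mathcal{G}$ carries that scheme, which is (c).

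For ($\Leftarrow$): given (a)--(c), use (b) to define $B_0\in\mathbb{C}[C_r]^{n\times n}$ by $(B_0)_{ij}:=\mathcal{S}_{ij}$ for $i\neq j$ and $(B_0)_{ii}:=0$; self-adjointness of $\mathcal{S}$ gives (R1)--(R3), and $\hat\alpha(B_0)=\mathcal{S}$. As before, linear dependence and equiangularity give $\mu<1$, so the level sets of $\mathcal{G}$ are precisely $\{\lceil gI\rfloor\}_{g\in C_r}\cup\{\lceil gB_0\rfloor\}_{g\in C_r}$. By (c), $\mathcal{G}$ carries an association scheme, and since a matrix's level sets are uniquely determined, this collection of $2r$ matrices \emph{is} that scheme; in particular its span is a $*$-algebra by axiom~(A3). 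Transporting through the injective $*$-algebra homomorphism $\lceil\cdot\rfloor$, the matrices $\{gI\}_{g\in C_r}\cup\{gB_0\}_{g\in C_r}$ span a $*$-algebra in $\mathbb{C}[C_r]^{n\times n}$, which is exactly (R4). Thus $B_0$ is a roux with $\mathcal{S}=\hat\alpha(B_0)$, so $\mathscr{L}$ is a roux line sequence for $B_0$. (Alternatively, (R4) can be extracted from Lemma~\ref{lem.roux scheme characterization}.)

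The remaining ``in this case'' assertions follow uniformly: with the roux $B$ (resp.\ $B_0$) in hand, $\mathcal{G}=\mathcal{G}_{\alpha^{-1}}^+$, which by Theorem~\ref{thm.roux primitive idempotents} is a scalar multiple of a primitive idempotent for the roux scheme, and that roux scheme is exactly the scheme carried by $\mathcal{G}$ (its adjacency matrices being the level sets of $\mathcal{G}$ just computed); finally $B$ being a roux forces $\mathcal{S}^2\in\operatorname{span}\{I,\mathcal{S}\}$ via Lemma~\ref{lem.B squared}, so $I+\mu\mathcal{S}$ is a positive multiple of an orthogonal projection and $\{\varphi_i\}$ is an equiangular tight frame for its span (one can also cite Theorem~\ref{thm.2tran-roux-etf}(b) together with the fact that any unit-norm representatives of an ETF again form one). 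I expect the main obstacle to be the explicit identification $\mathcal{G}=\mathcal{G}_{\alpha^{-1}}^+$ — keeping straight the Cayley embedding $\lceil\cdot\rfloor$, the inverse character $\alpha^{-1}$, and the relabelling of the level sets — together with pinning down that the coherence is genuinely $\mu_{\alpha^{-1}}^+$ and that linear dependence excludes the degenerate value $\mu=1$, which is precisely what keeps the $2r$ level sets of $\mathcal{G}$ from collapsing and so preserves the association-scheme structure.
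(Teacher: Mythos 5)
Your proof is correct and follows essentially the same route as the paper's: the same key identity expressing the big Gram matrix as $\sum_{g}g^{-1}\lceil \delta_g I\rfloor+\mu\sum_{g}g^{-1}\lceil \delta_g\tilde{B}\rfloor$, the same reduction to a roux over $C_r$ via Lemma~\ref{lem.basic roux trans}(c), the same construction of $\tilde{B}$ from the signature matrix with (R4) extracted from axiom (A3) of the carried scheme, and the same appeal to Theorems~\ref{thm.roux signature} and~\ref{thm.roux primitive idempotents} for the closing claims. The one quibble is your justification that $\mu<1$: a repeated single line does satisfy the paper's definition of linear dependence (it spans dimension $d<n$), so "not linearly dependent" is not quite the right reason — though this degenerate case is one the paper's own proof also silently excludes when it asserts that the $2r$ level-set coefficients of $\mathcal{G}$ are distinct.
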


In the sequel paper~\cite{IversonM:future}, we will see that doubly transitive lines are characterized by (a)--(c), with the added condition that the association scheme of (c) is Schurian.
Taken together, these two results help to clarify that roux lines are ``non-Schurian'' analogues of doubly transitive lines.

\begin{proof}[Proof of Theorem~\ref{thm.grampa's roux}]
We start with a general observation.
Suppose the Gram matrix of $\{\varphi_i\}_{i\in[n]}$ has the form $I+\mu\mathcal{S}$, where $\mu>0$ and $\mathcal{S}$ has entries in the cyclic group $C_r \leq \mathbb{C}^\times$, and define $\tilde{B}\in\mathbb{C}[C_r]^{n\times n}$ to have entries $\tilde{B}_{ii}=0$ and $\tilde{B}_{ij}=\delta_{\mathcal{S}_{ij}}$ for $i\neq j$.
(As usual, $\delta_g$ denotes the image of $g\in C_r$ in $\mathbb{C}[C_r]$.)
Then the Gram matrix $\mathcal{G}$ of $\{g\varphi_i\}_{i\in[n],g\in C_r}$ can be expressed as
\begin{equation}
\label{eq.grampa}
\mathcal{G}
=\sum_{g\in C_r}g^{-1}\lceil \delta_gI\rfloor +\mu\sum_{g\in C_r}g^{-1}\lceil \delta_g\tilde{B}\rfloor.
\end{equation}
Note that this expression leverages our convention that $C_r$ lies in $\mathbb{C}$.

With this, we first show ($\Leftarrow$).
By (a) and (b), we may define $\tilde{B}$ as above, which satisfies (R1)--(R3) by definition.
Then by \eqref{eq.grampa}, the Gram matrix $\mathcal{G}$ of $\{g\varphi_i\}_{i\in[n],g\in C_r}$ carries $\{\lceil \delta_gI\rfloor\}_{g\in C_r}$ and $\{\lceil \delta_g\tilde{B}\rfloor\}_{g\in C_r}$.
By (c), these matrices form an association scheme, and so they span an algebra that is isomorphic to $\mathscr{A}(\tilde{B})$, implying (R4).
As such, $\tilde{B}$ is a roux.
Taking $\alpha \in \hat{C_r}$ to be the identity character, we have that $\langle \varphi_j, \varphi_i \rangle = \mathcal{G}_{(i,1),(j,1)} = \mu\cdot \alpha( \tilde{B}_{ij} )$ for every $i,j \in [n]$.
Hence, $\hat{\alpha}(\tilde{B})$ is the signature matrix of $\{ \varphi_i \}_{i\in [n]}$.
Therefore $\mathscr{L}$ are roux lines for $\tilde{B}$, and its unit norm representatives $\{ \varphi_i \}_{i \in [n]}$ are an equiangular tight frame for their span.
It follows that $\mathcal{G}^2 = c \mathcal{G}$ for some $c>0$, and so Theorem~\ref{thm.roux primitive idempotents} implies that $\mathcal{G}$ is a scalar multiple of a primitive idempotent for the roux scheme of $\tilde{B}$.

For ($\Rightarrow$), there exists an $n\times n$ roux $B$ for some $\Gamma$, and the lines $\mathscr{L}$ have signature matrix $\mathcal{S}=\hat\alpha(B)$ for some $\alpha\in\hat\Gamma$.
That is, there exist unit-norm representatives $\{\varphi_i\}_{i\in[n]}$ of $\mathscr{L}$ whose Gram matrix is $I+\mu\mathcal{S}$ for some $\mu>0$.
Furthermore, the off-diagonal entries of $\mathcal{S}$ lie in the image of $\alpha$, which equals $C_r$ for some $r$.
This gives (a) and (b).
Next, we may define $\tilde{B}$ as above, which by Lemma~\ref{lem.basic roux trans}(c), equals the roux $\bar\alpha(B)$ for $C_r$.
Then \eqref{eq.grampa} shows that the Gram matrix $\mathcal{G}$ of $\{g\varphi_i\}_{i\in[n],g\in C_r}$ carries the corresponding roux scheme, implying (c).
\end{proof}

Signature matrices of unit-norm representatives of roux lines are necessarily comprised of roots of unity, and this feature leads to a necessary integrality condition for the existence of roux lines: 

\begin{corollary}
\label{cor.roux lines integrality}
Suppose there exist $n>d$ roux lines for $\Gamma$ spanning $\mathbb{C}^d$, and put
\[
q=\frac{(n-2d)^2(n-1)}{d(n-d)}.
\]
Then $q\in\mathbb{Z}$ and $\sqrt{q}\in\mathbb{Z}[\omega]$, where $\omega$ is a primitive $r$th root of unity with $r=|\Gamma|$.
\end{corollary}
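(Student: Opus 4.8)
The plan is to distill a single arithmetic identity, $q = C^2$ for a suitable real algebraic integer $C$, from the spectral structure of a signature matrix attached to the roux lines, and then deduce integrality of $q$ and $\sqrt q$ from the standard fact that a rational algebraic integer is a rational integer.

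First I would unpack the hypothesis. Since $\mathscr{L}$ is a sequence of $n > d$ roux lines for $\Gamma$, by definition together with Theorem~\ref{thm.roux signature} there is a roux $B\in\mathbb{C}[\Gamma]^{n\times n}$ with parameters $\{c_g\}_{g\in\Gamma}$ and a character $\alpha\in\hat\Gamma$ such that $\mathcal{S}:=\hat\alpha(B)$ is the signature matrix of an equiangular tight frame; concretely, $\mathscr{L}$ has equal-norm representatives whose Gram matrix is $I+\mu\mathcal{S}$ for some $\mu>0$, and this matrix is a positive scalar multiple of a rank-$d$ orthogonal projection (rank $d$ because the lines span $\mathbb{C}^d$). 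Set $C:=\sum_{g\in\Gamma}c_g\,\alpha(g)$. By Lemma~\ref{lem.B squared} the $c_g$ are nonnegative integers with $c_{g^{-1}}=c_g$, and each $\alpha(g)$ is an $r$th root of unity since $|\Gamma|=r$, so $C\in\mathbb{Z}[\omega]$; moreover $\overline C=\sum_g c_g\overline{\alpha(g)}=\sum_g c_{g^{-1}}\alpha(g^{-1})=C$, so $C$ is a real algebraic integer. Finally, \eqref{eq.roux signature matrix} gives $\mathcal{S}^2=(n-1)I+C\mathcal{S}$.

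Next I would read off the spectrum of $\mathcal{S}$. It is Hermitian, and its eigenvalues satisfy $\lambda^2-C\lambda-(n-1)=0$, so it has two eigenvalues $\lambda_+>0>\lambda_-$ with $\lambda_++\lambda_-=C$ and $\lambda_+\lambda_-=-(n-1)$. Because $I+\mu\mathcal{S}$ is a positive multiple of a rank-$d$ projection, it has eigenvalue $0$ with multiplicity $n-d$; the corresponding eigenvalue $-1/\mu<0$ of $\mathcal{S}$ must be $\lambda_-$, so $\lambda_-$ has multiplicity $n-d$ and $\lambda_+$ has multiplicity $d$. Since $\operatorname{tr}(\mathcal{S})=0$, we get $(n-d)\lambda_-+d\lambda_+=0$, i.e. $\lambda_+=-\tfrac{n-d}{d}\lambda_-$. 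Substituting into $\lambda_+\lambda_-=-(n-1)$ yields $\lambda_-^2=\tfrac{d(n-1)}{n-d}$, and then $C=\lambda_++\lambda_-=\tfrac{2d-n}{d}\lambda_-$, so $C^2=\tfrac{(n-2d)^2}{d^2}\cdot\tfrac{d(n-1)}{n-d}=q$. The conclusion follows: $q=C^2$ is a product of algebraic integers, hence an algebraic integer; it is also rational (a ratio of integers with nonzero denominator $d(n-d)$, as $0<d<n$), so $q\in\mathbb{Z}$. And $\sqrt q=|C|$, which is $C$ or $-C$, so $\sqrt q\in\mathbb{Z}[\omega]$.

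I do not expect a genuine obstacle here: the argument is a short spectral computation plus the elementary fact that a rational algebraic integer lies in $\mathbb{Z}$. The one place to be careful is matching eigenvalue multiplicities correctly to $d$ and $n-d$ — using that $\mu>0$ forces the zero eigenvalue of the Gram matrix to come from the negative eigenvalue of $\mathcal{S}$ — together with the observation that $C$ is real so that $|C|\in\mathbb{Z}[\omega]$ makes sense.
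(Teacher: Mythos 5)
Your proof is correct and takes essentially the same route as the paper's: both arguments identify the linear coefficient $C$ in the quadratic $\mathcal{S}^2=(n-1)I+C\mathcal{S}$ with $\sum_{g\in\Gamma}c_g\alpha(g)\in\mathbb{Z}[\omega]$ on one hand and with $\pm\sqrt{q}$ on the other, then conclude by rationality of $q$. The only cosmetic difference is that you derive $C^2=q$ from the eigenvalue multiplicities and the trace of $\mathcal{S}$, whereas the paper obtains the same identity by invoking equality in the Welch bound together with the tightness relation $\mathcal{G}^2=(n/d)\mathcal{G}$.
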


\begin{proof}
By assumption, there exists an $n\times n$ roux $B$ for $\Gamma$ such that the given lines have unit-norm representatives with signature matrix $\mathcal{S}=\hat\alpha(B)$ for some $\alpha\in\hat\Gamma$.
By Theorem~\ref{thm.roux signature}, $\mathcal{S}$ is the signature matrix of an ETF.
Since ETFs achieve equality in the Welch bound~\eqref{eq.welch bound}, the Gram matrix of this ETF is given by
\[
\mathcal{G}
=I+\sqrt{\frac{n-d}{d(n-1)}}\cdot\mathcal{S},
\]
and tightness implies $\mathcal{G}^2=(n/d)\mathcal{G}$.
We express this quadratic in terms of $\mathcal{S}$ and isolate $\mathcal{S}^2$:
\[
\mathcal{S}^2
=(n-1)I+\operatorname{sign}(n-2d)\cdot\sqrt{q}\cdot\mathcal{S}.
\]
Comparing with \eqref{eq.roux signature matrix}, we note that each $c_g$ is an integer and each $\alpha(g)$ is an $r$th root of unity, and so $\sqrt{q}\in\mathbb{Z}[\omega]$.
This further implies that $\sqrt{q}$ and $q$ are algebraic integers.
Since $q$ is also rational, it must be an integer.
\end{proof}

Recall that Lemma~\ref{lem.basic roux trans} provides a few basic roux transformations.
We now discuss how some of these interact with evaluating a roux at a character.
We say two roux $B,\tilde{B}$ for $\Gamma$ are \textbf{switching equivalent}, denoted $B\sim\tilde{B}$, if there exists a diagonal matrix $D$ as in Lemma~\ref{lem.basic roux trans}(a) such that $\tilde{B}=DBD^{-1}$.
This echoes the more classical notion of switching equivalence between signature matrices, in which the diagonal entries of $D$ are required to be complex with unit modulus.
Note that $B\sim\tilde{B}$ implies that $\hat\alpha(B)$ and $\hat\alpha(\tilde{B})$ are switching equivalent.
(The converse fails to hold by taking $\alpha$ to be defined by $\alpha(z)=1$, for example.)
It is convenient to define the \textbf{normalization} $\bar{B}$ of a roux $B$ for $\Gamma$ to be the unique $\bar{B}\sim B$ with $\bar{B}_{i,1}=\bar{B}_{1,i}=1$ (the identity element of $\Gamma$) for every $i\neq 1$.
Regarding Lemma~\ref{lem.basic roux trans}(d), we note that if $B$ is a roux for $\Gamma\leq\Lambda$, then $\{\hat\alpha(B):\alpha\in\hat\Gamma\}=\{\hat\beta(B):\beta\in\hat\Lambda\}$ since each $\beta\in\hat\Lambda$ restricts to a character $\alpha\in\hat\Gamma$.
In particular, the additional characters in $\hat\Lambda$ fail to produce new roux lines.
The following result reverses the transformation in Lemma~\ref{lem.basic roux trans}(d), and the proof leverages the notion of roux lines:

\begin{lemma}
\label{lem.inverse prob}
Take any $n\times n$ roux $B$ with parameters $\{c_g\}_{g\in\Gamma}$, and put $\Lambda=\langle g:c_g\neq0\rangle$.
Then the normalization of $B$ lies in $\mathbb{C}[\Lambda]^{n\times n}\subseteq\mathbb{C}[\Gamma]^{n\times n}$, and is a roux for $\Lambda$.
Furthermore, if $\tilde{B}\sim B$ is a roux for $\tilde{\Lambda}\leq\Gamma$, then $\Lambda\leq\tilde{\Lambda}$.
\end{lemma}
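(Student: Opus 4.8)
The plan is to establish the two assertions separately, and in both to reduce at once to the normalization $\bar B$. Since $\bar B \sim B$, Lemma~\ref{lem.basic roux trans}(a) tells us $\bar B$ is a roux for $\Gamma$ with the same parameters $\{c_g\}_{g\in\Gamma}$, so the subgroup $\Lambda = \langle g : c_g \neq 0\rangle$ is unaffected, and by definition of normalization $\bar B_{i,1} = \bar B_{1,i} = 1$ for every $i \neq 1$.

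The crux is to show that every off-diagonal entry of $\bar B$ already lies in $\Lambda$. By Lemma~\ref{lem.B squared},
\[
\bar B^2 = (n-1)I + w\bar B, \qquad w := \sum_{g\in\Gamma} c_g\, g \in \mathbb{C}[\Gamma],
\]
and since $c_g = 0$ whenever $g\notin\Lambda$, the element $w$ is supported inside $\Lambda$. Now fix $i \neq j$ with $i,j\neq 1$ and compare the $(i,j)$-entries of both sides. The left-hand side is $\sum_{k\neq i,j}\bar B_{ik}\bar B_{kj}$, a sum of elements of $\Gamma$ all contributing nonnegative coefficients, and the summand indexed by $k=1$ equals $\bar B_{i,1}\bar B_{1,j} = 1$ by normalization; so $1$ lies in the support of this element of $\mathbb{C}[\Gamma]$. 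The right-hand side equals $w\bar B_{ij}$, whose support lies in the coset $\Lambda\bar B_{ij}$. Hence $1\in\Lambda\bar B_{ij}$, i.e.\ $\bar B_{ij}\in\Lambda$; and when $i=1$ or $j=1$ this holds by normalization. Therefore $\bar B\in\mathbb{C}[\Lambda]^{n\times n}$. Since $\bar B$ inherits (R1)--(R3) and the identity $\bar B^2 = (n-1)I + \sum_{g\in\Lambda}c_g\, g\bar B$ lives entirely in $\mathbb{C}[\Lambda]^{n\times n}$, Lemma~\ref{lem.B squared} applied with $\Lambda$ in place of $\Gamma$ shows that $\bar B$ is a roux for $\Lambda$.

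For the minimality clause, suppose $\tilde B\sim B$ is a roux for some $\tilde\Lambda\leq\Gamma$. On one hand, Lemma~\ref{lem.basic roux trans}(a) forces the roux parameters of $\tilde B$ for $\Gamma$ to be exactly $\{c_g\}_{g\in\Gamma}$. On the other hand, Lemma~\ref{lem.basic roux trans}(d) expresses those same parameters as the parameters of the roux $\tilde B$ for $\tilde\Lambda$, padded with zeros on $\Gamma\setminus\tilde\Lambda$; in particular $c_g = 0$ for every $g\notin\tilde\Lambda$. Thus $\{g : c_g\neq 0\}\subseteq\tilde\Lambda$, and since $\tilde\Lambda$ is a subgroup we conclude $\Lambda\leq\tilde\Lambda$.

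I expect the only genuine subtlety to be the coset comparison in the second paragraph---the observation that a single off-diagonal entry of $\bar B^2$, together with normalization, already pins $\bar B_{ij}$ down modulo $\Lambda$; once that is in hand the rest is bookkeeping with Lemmas~\ref{lem.B squared} and~\ref{lem.basic roux trans}. A route closer to the ``roux lines'' theme reaches the same point: evaluate $\bar B$ at the characters $\alpha$ of $\Gamma$ trivial on $\Lambda$, so that $\sum_{g}c_g\alpha(g)=\sum_{g\in\Lambda}c_g=n-2$; then~\eqref{eq.roux signature matrix} gives $\hat\alpha(\bar B)^2 = (n-1)I + (n-2)\hat\alpha(\bar B)$, which forces $\hat\alpha(\bar B)+I$ to be a rank-one positive semidefinite matrix with unit diagonal, hence (using normalization) the all-ones matrix, so $\hat\alpha(\bar B)=J-I$. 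Since these characters separate the cosets of $\Lambda$, the off-diagonal entries of $\bar B$ must lie in $\Lambda$.
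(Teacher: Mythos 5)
Your proof is correct, and it takes a genuinely different route from the paper's. For the containment $\bar{B}\in\mathbb{C}[\Lambda]^{n\times n}$, the paper introduces the triple-product subgroup $\Pi=\langle B_{ij}B_{jk}B_{ki}\rangle$, observes $\bar{B}\in\mathbb{C}[\Pi]^{n\times n}$, and then proves $\Pi=\Lambda$ by showing the two subgroups have the same annihilator in $\hat\Gamma$ --- a chain of equivalences passing through switching equivalence of $\hat\alpha(B)$ with $J-I$ (citing Chien--Waldron), the rank formula $d_{\alpha^{-1}}^+=1$, and the bound \eqref{eq.calpha bound}. Your main argument replaces all of this with a direct coefficient comparison in the $(i,j)$ entry of $\bar B^2=(n-1)I+\bigl(\sum_g c_g g\bigr)\bar B$: the $k=1$ summand $\bar B_{i1}\bar B_{1j}=1$ forces the identity into the support of the left side, while the right side is supported in the coset $\Lambda\bar B_{ij}$, whence $\bar B_{ij}\in\Lambda$. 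This is more elementary and entirely self-contained (no characters, no external citation); what the paper's detour buys is the switching-invariant description $\Lambda=\Pi$ and the reusable equivalence $\hat c_\alpha=n-2\iff\Lambda\leq\ker\alpha$, both of which it exploits elsewhere (e.g.\ in Corollaries~\ref{cor.real line detector} and~\ref{cor.connectedness}) and, in particular, in its proof of the final claim, which relies on triple products being unchanged under switching. Your handling of that final claim is also different and clean: you use uniqueness of the roux parameters over $\Gamma$ (the $gI$'s and $gB$'s being linearly independent) together with Lemma~\ref{lem.basic roux trans}(a) and (d) to conclude $c_g=0$ off $\tilde\Lambda$. Your closing character-theoretic sketch is essentially the paper's mechanism in disguise, but the coset argument is the cleaner contribution.
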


\begin{proof}
Define $\Pi:=\langle B_{ij}B_{jk}B_{ki}:i\neq j\neq k\neq i\rangle$, and take any $\alpha\in\hat\Gamma$.
Then since $\bar{B}_{i1}=\bar{B}_{1i}=1$ for every $i\neq 1$ and $\bar{B}_{ij}\bar{B}_{jk}\bar{B}_{ki}=B_{ij}B_{jk}B_{ki}$ for every $i,j,k\in[n]$, we have $\bar{B}\in\mathbb{C}[\Pi]^{n\times n}$.
We claim that $\Pi\leq\operatorname{ker}\alpha$ if and only if $\Lambda\leq\operatorname{ker}\alpha$.
This in turn would imply $\Pi=\Lambda$ since a subgroup is determined by its annihilator, and so $\bar{B}\in\mathbb{C}[\Lambda]^{n\times n}$.

We prove our claim by identifying a sequence of equivalent statements.
First, $\Pi\leq\operatorname{ker}\alpha$ if and only if $\hat\alpha(B)_{ij}\hat\alpha(B)_{jk}\hat\alpha(B)_{ki}=1$ whenever $i\neq j\neq k\neq i$.
By Theorem~2.2 in~\cite{ChienW:16}, this is equivalent to $\hat\alpha(B)$ being switching equivalent to $J-I$, where $J$ denotes the $n\times n$ matrix of all ones.
Equivalently, $\hat\alpha(B)$ is the signature matrix of a $1$-dimensional ETF, that is, by Lemma~\ref{lem.two reps of same lines}, we equivalently have $d_{\alpha^{-1}}^+=1$.
By Theorem~\ref{thm.roux primitive idempotents} and Lemma~\ref{lem.B squared}, this is equivalent to having $\hat{c}_\alpha=\hat{c}_{\alpha^{-1}}=n-2$.
Since $\operatorname{Re}\alpha(g)\leq 1$ for every $g\in\Gamma$, Lemma~\ref{lem.B squared} gives
\begin{equation}
\label{eq.calpha bound}
\operatorname{Re}\hat{c}_{\alpha}
=\sum_{g\in\Gamma}c_g\operatorname{Re}\alpha(g)
\leq\sum_{g\in\Gamma}c_g
=n-2,
\end{equation}
with equality only if $\operatorname{Re}\alpha(g)=1$ for every $g$ with $c_g\neq0$, implying $\Lambda\leq\operatorname{ker}\alpha$.
Conversely, $\Lambda\leq\operatorname{ker}\alpha$ implies $\hat{c}_\alpha=\sum_{g\in\Gamma}c_g=n-2$.
Overall, $\hat{c}_{\alpha}=n-2$ if and only if $\Lambda\leq\operatorname{ker}\alpha$, completing the proof of our intermediate claim.

Now that we have $\bar{B}\in\mathbb{C}[\Lambda]^{n\times n}$, we verify that $\bar{B}$ is a roux for $\Lambda$.
To this end, (R1)--(R3) are immediate, while (R4) follows from Lemma~\ref{lem.basic roux trans}(a) and Lemma~\ref{lem.B squared}:
\[
\bar{B}^2
=(n-1)I+\sum_{g\in\Gamma}c_gg\bar{B}
=(n-1)I+\sum_{g\in\Lambda}c_gg\bar{B}.
\]
Overall, $\bar{B}$ is a roux for $\Lambda$.

For the last claim, the previous argument shows that normalizing $\tilde{B}$ produces a roux for $\tilde\Pi=\langle \tilde{B}_{ij}\tilde{B}_{jk}\tilde{B}_{ki}:i\neq j\neq k\neq i\rangle$.
However, $\tilde{B}_{ij}\tilde{B}_{jk}\tilde{B}_{ki}=B_{ij}B_{jk}B_{ki}$ for every $i,j,k\in[n]$, and so $\Lambda=\Pi=\tilde\Pi\leq\tilde\Lambda$, as desired.
\end{proof}

In what follows and throughout, we let $\circ$ denote the \textbf{Hadamard product} defined by $(A\circ B)_{ij}=A_{ij}B_{ij}$, and we let $A^{\circ k}$ denote the $k$th \textbf{Hadamard power} of $A$, defined by $(A^{\circ k})_{ij}=(A_{ij})^k$.

\begin{corollary}[Roux lines detector]
\label{cor.roux lines detector}
Given a signature matrix $\mathcal{S}$, normalize the first row and column to get $\bar{\mathcal{S}}$.
Then $\mathcal{S}$ is the signature matrix of unit-norm representatives of roux lines if and only if the following occur simultaneously:
\begin{itemize}
\item[(a)]
The entries of $\bar{\mathcal{S}}$ are all roots of unity.
\item[(b)]
Every Hadamard power of $\bar{\mathcal{S}}$ has exactly two eigenvalues.
\end{itemize}
\end{corollary}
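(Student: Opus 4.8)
The plan is to reduce to a normalized form and then run both directions through Theorem~\ref{thm.roux signature}. Since $\bar{\mathcal{S}}=D\mathcal{S}D^{*}$ for the diagonal unitary $D=\operatorname{diag}(1,\mathcal{S}_{12},\ldots,\mathcal{S}_{1n})$, rescaling unit-norm representatives by the entries of $D$ turns a set of unit-norm representatives with signature matrix $\mathcal{S}$ into one for the \emph{same} sequence of lines with signature matrix $\bar{\mathcal{S}}$. Hence $\mathcal{S}$ is the signature matrix of unit-norm representatives of roux lines if and only if $\bar{\mathcal{S}}$ is, so I may assume from the start that $\mathcal{S}=\bar{\mathcal{S}}$ is already normalized and prove the stated equivalence with $\mathcal{S}$ in place of $\bar{\mathcal{S}}$. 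Throughout I will use the fact (essentially the $(\Rightarrow)$ half of the proof of Theorem~\ref{thm.roux signature}) that a signature matrix is the signature matrix of an ETF exactly when it has precisely two distinct eigenvalues.

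For $(\Leftarrow)$, assume (a) and (b). By (a) the off-diagonal entries of $\mathcal{S}$ generate a finite cyclic subgroup $C_{r}\subseteq\mathbb{T}$, so I define $B\in\mathbb{C}[C_{r}]^{n\times n}$ by $B_{ii}=0$ and $B_{ij}=\delta_{\mathcal{S}_{ij}}$ for $i\neq j$; conditions (R1)--(R3) are immediate, using $\mathcal{S}_{ji}=\mathcal{S}_{ij}^{-1}$ for (R3). Writing the characters of $C_{r}$ as $\alpha_{j}\colon z\mapsto z^{j}$, one checks $\hat{\alpha}_{0}(B)=J-I$ and $\hat{\alpha}_{j}(B)=\mathcal{S}^{\circ j}$ for $j\geq 1$ (all with zero diagonal). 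The matrix $J-I$ has exactly two eigenvalues, and each $\mathcal{S}^{\circ j}$ does by (b); being signature matrices, all of these are signature matrices of ETFs, so Theorem~\ref{thm.roux signature} shows $B$ is a roux. Finally $\mathcal{S}=\hat{\alpha}_{1}(B)$, and the lines $\mathcal{S}$ describes are linearly dependent because $\mathcal{S}=\mathcal{S}^{\circ 1}$ has two eigenvalues (so the Gram matrix $\mathcal{S}-\lambda_{\min}I$ has rank less than $n$); hence those lines are roux lines with $\mathcal{S}$ as their signature matrix for a suitable choice of unit-norm representatives.

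For $(\Rightarrow)$, suppose $\mathcal{S}$ is the signature matrix of unit-norm representatives of roux lines $\mathscr{L}$. By definition of roux lines there are a roux $B'$ for some abelian $\Gamma$, a character $\alpha'\in\hat{\Gamma}$, and unit-norm representatives of $\mathscr{L}$ whose signature matrix is $\widehat{\alpha'}(B')$; since two unit-norm representations of the same lines differ by phases, $\mathcal{S}$ is switching equivalent to $\widehat{\alpha'}(B')$. Let $\bar{B'}\sim B'$ be the normalization of $B'$, which is again a roux by Lemma~\ref{lem.basic roux trans}(a). Because $\bar{B'}_{1i}=\bar{B'}_{i1}$ equals the identity of $\Gamma$, the matrix $\widehat{\alpha'}(\bar{B'})$ has first row and column equal to $\mathbf{1}$ and is switching equivalent to $\widehat{\alpha'}(B')$, hence to $\mathcal{S}$; as the normalized element of a switching equivalence class of signature matrices is unique, $\mathcal{S}=\widehat{\alpha'}(\bar{B'})$. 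Now (a) holds because the off-diagonal entries of $\mathcal{S}$ are the values $\alpha'(\bar{B'}_{ij})$ with $\bar{B'}_{ij}\in\Gamma$ by (R2), and (b) holds because $\mathcal{S}^{\circ k}=\widehat{(\alpha')^{k}}(\bar{B'})$, which is the signature matrix of an ETF by Theorem~\ref{thm.roux signature} applied to the roux $\bar{B'}$ and the character $(\alpha')^{k}\in\hat{\Gamma}$, and therefore has exactly two eigenvalues.

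The computations are all routine; the one spot that needs care is the normalization bookkeeping in $(\Rightarrow)$, namely checking that passing from a roux to its normalization commutes with evaluation at a character and keeps the result inside the switching equivalence class of $\mathcal{S}$, so that $\bar{\mathcal{S}}$ is identified with $\widehat{\alpha'}(\bar{B'})$ on the nose rather than merely up to switching. Beyond that, the proof amounts to invoking the correct half of Theorem~\ref{thm.roux signature} in each direction --- building a roux from the Hadamard powers for $(\Leftarrow)$, extracting the two-eigenvalue condition for $(\Rightarrow)$ --- and dispatching the trivial character $\alpha_{0}$ and the vacuous case $n=1$ (excluded by (b)) separately by inspection.
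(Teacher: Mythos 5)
Your proof is correct and follows essentially the same route as the paper: reduce to the normalized signature matrix, build the roux $B_{ij}=\delta_{\mathcal{S}_{ij}}$ over $C_r$ and invoke Theorem~\ref{thm.roux signature} for $(\Leftarrow)$, and identify Hadamard powers of $\bar{\mathcal{S}}$ with evaluations of the roux at powers of the character for $(\Rightarrow)$. The only cosmetic difference is that in $(\Rightarrow)$ you normalize the roux and use uniqueness of the normalized representative of a switching class, whereas the paper conjugates the signature matrix by $D^k$ directly; the two bookkeeping devices are interchangeable.
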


\begin{proof}
($\Rightarrow$)
Suppose there exists an $n\times n$ roux $B$ for some $\Gamma$, pick $\alpha\in\hat\Gamma$ and let $\mathcal{S}$ be switching equivalent to $\hat\alpha(B)$.
Then $\bar{\mathcal{S}}$ is the normalization of $\hat\alpha(B)$.
Since the off-diagonal entries of $\hat\alpha(B)$ are roots of unity, the same holds for its normalization, implying (a).
Take $D$ such that $\bar{\mathcal{S}}=D\hat\alpha(B)D^{-1}$, and put $v=\operatorname{diag}(D)\in\mathbb{T}^n$.
Then the $k$th Hadamard power of $\bar{\mathcal{S}}$ is given by
\[
\bar{\mathcal{S}}^{\circ k}
=(D\hat\alpha(B)D^{-1})^{\circ k}
=(\hat\alpha(B)\circ vv^*)^{\circ k}
=\widehat{\alpha^k}(B)\circ (v^{\circ k})(v^{\circ k})^*
=D^k\widehat{\alpha^k}(B)(D^k)^{-1}
\]
That is, $\bar{\mathcal{S}}^{\circ k}$ is switching equivalent to $\widehat{\alpha^k}(B)$.
Theorem~\ref{thm.roux signature} then implies (b).

($\Leftarrow$)
Given an $n\times n$ signature matrix $\bar{\mathcal{S}}$ satisfying (a) and (b), pick any $r$ such that the off-diagonal entries of $\bar{\mathcal{S}}$ lie in $C_r$.
Define $B\in\mathbb{C}[C_r]^{n\times n}$ so that $B_{ii}=0$ for every $i\in[n]$ and $B_{ij}=\delta_{\mathcal{S}_{ij}}$ whenever $i\neq j$.
(As usual, $\delta_g$ denotes the image of $g\in C_r$ in $\mathbb{C}[C_r]$.)
We claim that $B$ is a roux, which would imply the result since evaluating $B$ at the character $\alpha$ defined by $\alpha(z)=z$ recovers $\bar{\mathcal{S}}$.
First, $B$ satisfies (R1)--(R3) by definition.
Next, the following holds for every $k$:
\[
\widehat{\alpha^k}(B)
=(\hat\alpha(B))^{\circ k}
=\bar{\mathcal{S}}^{\circ k}.
\]
As such, (b) implies that evaluating $B$ at every character of the form $\alpha^k$ produces the signature matrix of an ETF.
Since $\alpha$ generates $\hat\Gamma$, we may then conclude (R4) by Theorem~\ref{thm.roux signature}.
\end{proof}

We say a sequence of lines is \textbf{real} if their normalized signature matrix is real.
For example, letting $\omega$ denote a primitive cube root of unity, then the lines spanned by $(1,1), (1,\omega), (1,\omega^2)\in\mathbb{C}^2$ are real (even though the Gram matrix of these vectors is not real).

\begin{lemma}[Real lines detector]
\label{lem.real lines detector}
An $n\times n$ signature matrix $\mathcal{S}$ is a signature matrix of real lines if and only if the eigenvalues of $\mathcal{S}^{\circ 2}$ are $n-1$ and $-1$.
\end{lemma}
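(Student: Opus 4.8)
The plan is to identify "signature matrix of real lines" with "switching equivalent to a Seidel matrix" — i.e.\ to a real symmetric $n\times n$ matrix with zero diagonal and off-diagonal entries in $\{\pm 1\}$ — and then to recognize the spectral condition on $\mathcal{S}^{\circ 2}$ as exactly a switching-invariant certificate for this. Two preliminary observations drive everything. First, since $\mathcal{S}$ is self-adjoint, so is $\mathcal{S}^{\circ 2}$, and it has zero diagonal, hence trace $0$. Second, for any unimodular diagonal matrix $D$ one has $(D\mathcal{S}D^{-1})^{\circ 2}=D^{2}\,\mathcal{S}^{\circ 2}\,(D^{2})^{-1}$ (and Hadamard squaring also commutes with permutation conjugation), so the spectrum of $\mathcal{S}^{\circ 2}$ depends only on the switching class of $\mathcal{S}$; in particular it equals the spectrum of $\bar{\mathcal{S}}^{\circ 2}$.

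For the forward direction I would argue: if $\mathcal{S}$ is a signature matrix of real lines, then by definition the normalization $\bar{\mathcal{S}}$ is real, hence a Seidel matrix, so $\bar{\mathcal{S}}^{\circ 2}=J-I$, whose eigenvalues are $n-1$ (once) and $-1$ ($n-1$ times); by the switching invariance above, $\mathcal{S}^{\circ 2}$ has the same eigenvalues.

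For the converse, suppose $\mathcal{S}^{\circ 2}$ has eigenvalues $n-1$ and $-1$. Since $\operatorname{tr}(\mathcal{S}^{\circ 2})=0$, the eigenvalue $n-1$ must be simple, so $\mathcal{S}^{\circ 2}+I$ is positive semidefinite of rank one, say $\mathcal{S}^{\circ 2}+I=vv^{*}$. Matching traces gives $\|v\|^{2}=n$ and matching diagonals gives $|v_i|=1$ for all $i$, so $v\in\mathbb{T}^n$; the off-diagonal entries then read $\mathcal{S}_{ij}^{2}=v_i\overline{v_j}$ for $i\ne j$. Now pick for each $i$ a square root $d_i\in\mathbb{T}$ of $\overline{v_i}$ and set $D=\operatorname{diag}(d_i)$; then $\mathcal{S}':=D\mathcal{S}D^{-1}$ is switching equivalent to $\mathcal{S}$, is still self-adjoint with zero diagonal, and for $i\ne j$ satisfies $(\mathcal{S}'_{ij})^{2}=d_i^{2}\,\overline{d_j^{2}}\,\mathcal{S}_{ij}^{2}=(d_i^{2}v_i)\overline{(d_j^{2}v_j)}=1$, hence $\mathcal{S}'_{ij}\in\{\pm 1\}$. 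Together with self-adjointness this makes $\mathcal{S}'$ a Seidel matrix, so $\mathcal{S}$ is switching equivalent to a real signature matrix; since normalization is a switching invariant and normalizing a Seidel matrix keeps it real, $\bar{\mathcal{S}}$ is real and the lines are real.

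I expect the only step with any real content — and it is a mild one — to be the converse: extracting the unimodular vector $v$ from the rank-one matrix $\mathcal{S}^{\circ 2}+I$ and then using square roots of the $v_i$ to switch $\mathcal{S}$ onto a $\{\pm 1\}$ matrix. Everything else (self-adjointness of $\mathcal{S}^{\circ 2}$, the trace count pinning down multiplicities, switching-invariance of the Hadamard-square spectrum, and the unpacking of "real lines" into "switching equivalent to a Seidel matrix") is routine bookkeeping.
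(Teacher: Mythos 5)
Your proposal is correct and follows essentially the same route as the paper: the forward direction is the identity $(D\bar{\mathcal{S}}D^{-1})^{\circ 2}=D^{2}(J-I)(D^{2})^{-1}$, and the converse extracts a unimodular vector $v$ from the rank-one positive semidefinite matrix $I+\mathcal{S}^{\circ 2}$ (using the trace to pin the multiplicity of $n-1$ at one) and then switches by entrywise square roots of $v$ to land on a $\{\pm1\}$-valued signature matrix. The paper phrases that last step as solving $X^{\circ 2}=uu^{*}$ by $X=vv^{*}\circ R$ with $R\in\{\pm1\}^{n\times n}$, which is the same construction as your conjugation by $D=\operatorname{diag}(d_i)$ with $d_i^{2}=\overline{v_i}$.
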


\begin{proof}
($\Rightarrow$)
Suppose $\mathcal{S}$ is a signature matrix of real lines.
Then the off-diagonal entries of its normalization $\bar{\mathcal{S}}=D^{-1}\mathcal{S}D$ lie in $\{\pm1\}$.
Put $v=\operatorname{diag}(D)$.
Then
\[
\mathcal{S}^{\circ 2}
=(D\bar{\mathcal{S}}D^{-1})^{\circ 2}
=(\bar{\mathcal{S}}\circ vv^*)^{\circ 2}
=(J-I)\circ(v^{\circ 2})(v^{\circ 2})^*
=D^2(J-I)(D^2)^{-1},
\]
i.e., $\mathcal{S}^{\circ 2}$ has the same eigenvalues as $J-I$, where $J$ is the matrix of all ones.

($\Leftarrow$)
Since $\mathcal{S}^{\circ 2}$ has zero trace, the eigenvalues $n-1$ and $-1$ have multiplicities $1$ and $n-1$, respectively.
Thus, $I+\mathcal{S}^{\circ 2}$ has rank $1$ with maximum eigenvalue $n$, and so we may write $I+\mathcal{S}^{\circ 2}=uu^*$ for some $u\in\mathbb{T}^n$.
This in turn implies that $I+\mathcal{S}$ is a solution to $X^{\circ 2}=uu^*$.
Pick any $v\in\mathbb{T}^n$ such that $v^{\circ 2}=u$.
Then every solution has the form $X=vv^*\circ R$, where $R$ has entries in $\{\pm1\}$.
As such, we have $I+\mathcal{S}=vv^*\circ R$ for some symmetric $R\in\{\pm1\}^{n\times n}$ satisfying $R_{ii}=1$ for every $i\in[n]$.
Put $D=\operatorname{diag}(v)$.
Then isolating $\mathcal{S}$ gives
\[
\mathcal{S}
=vv^*\circ R-I
=D(R-I)D^{-1}.
\]
Since $R-I$ is the signature matrix of real lines, we are done.
\end{proof}

\begin{corollary}[Real roux lines detector]
\label{cor.real line detector}
Let $B$ be a roux for $\Gamma$ and pick $\alpha\in\hat\Gamma$.
Then $\hat\alpha(B)$ is a signature matrix of real lines if and only if $\alpha(g)$ is real for every $g\in\Gamma$ such that $c_g\neq0$.
\end{corollary}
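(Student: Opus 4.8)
The plan is to reduce the whole statement to a condition on the single number
$t:=\sum_{g\in\Gamma}c_g\,\alpha(g)^2$, where $\{c_g\}_{g\in\Gamma}$ are the roux parameters of $B$, and then to recognize that $t=n-2$ is exactly the equality case of a bound like \eqref{eq.calpha bound}.

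First I would record that $\mathcal{S}:=\hat\alpha(B)$ is a signature matrix (self-adjoint, zero diagonal, off-diagonal entries in the image of $\alpha$, hence roots of unity), so Lemma~\ref{lem.real lines detector} applies and says $\mathcal{S}$ is a signature matrix of real lines if and only if the eigenvalues of the Hadamard square $\mathcal{S}^{\circ 2}$ are $n-1$ and $-1$. The key observation is that squaring each off-diagonal entry $\alpha(B_{ij})$ of $\mathcal{S}$ gives $\alpha^2(B_{ij})$, so $\mathcal{S}^{\circ 2}=\widehat{\alpha^2}(B)$. Applying \eqref{eq.roux signature matrix} (that is, Lemma~\ref{lem.B squared}) to the character $\alpha^2$ therefore yields $(\mathcal{S}^{\circ 2})^2=(n-1)I+t\,\mathcal{S}^{\circ 2}$. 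Since $c_{g^{-1}}=c_g$, reindexing $g\mapsto g^{-1}$ shows $t\in\mathbb{R}$; and since $\mathcal{S}^{\circ 2}$ is Hermitian, nonzero, and has zero trace, it has exactly two distinct real eigenvalues, both roots of $\lambda^2-t\lambda-(n-1)=0$. Their product is $-(n-1)$ and their sum is $t$, so the eigenvalue pair is $\{n-1,-1\}$ precisely when $t=n-2$. Combining with Lemma~\ref{lem.real lines detector}, we get that $\hat\alpha(B)$ is a signature matrix of real lines if and only if $t=n-2$.

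It then remains to identify when $t=n-2$. By Lemma~\ref{lem.B squared}, the $c_g$ are nonnegative integers with $\sum_{g\in\Gamma}c_g=n-2$. Since each $\alpha(g)^2$ has modulus $1$ and $t$ is real, the argument of \eqref{eq.calpha bound} gives $t=\sum_{g\in\Gamma}c_g\operatorname{Re}\alpha(g)^2\le\sum_{g\in\Gamma}c_g=n-2$, with equality if and only if $\operatorname{Re}\alpha(g)^2=1$ — equivalently $\alpha(g)^2=1$, i.e.\ $\alpha(g)=\pm1$ — for every $g$ with $c_g\neq0$. Conversely, if $\alpha(g)$ is real for every such $g$, then $\alpha(g)^2=1$ there and $t=\sum_{g:\,c_g\neq0}c_g=n-2$. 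Together with the previous paragraph this proves the corollary. I do not expect a real obstacle here; the only points needing care are checking that the two-eigenvalue condition of Lemma~\ref{lem.real lines detector} is genuinely equivalent to $t=n-2$ (which uses that $\mathcal{S}^{\circ 2}$ is non-scalar with zero trace, so both roots of the quadratic actually occur) and that the equality case in the bound uses the nonnegativity — not merely integrality — of the $c_g$.
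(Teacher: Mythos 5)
Your proposal is correct and follows essentially the same route as the paper: reduce via Lemma~\ref{lem.real lines detector} to a spectral condition on $\mathcal{S}^{\circ 2}=\widehat{\alpha^2}(B)$, apply Lemma~\ref{lem.B squared} at the character $\alpha^2$ to get the quadratic relation, and then invoke the equality case of \eqref{eq.calpha bound}. The only cosmetic differences are that you phrase the two-eigenvalue condition via Vieta's formulas rather than the minimal polynomial, and you write $\hat{c}_{\alpha^2}$ as $\sum_g c_g\alpha(g)^2$ (equal by the symmetry $c_{g^{-1}}=c_g$); your flagged points of care match the paper's own.
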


\begin{proof}
By Lemma~\ref{lem.real lines detector}, $\hat\alpha(B)$ is a signature matrix of real lines if and only if $\hat\alpha(B)^{\circ 2}$ has minimal polynomial $x^2-(n-2)x-(n-1)$.
By Lemma~\ref{lem.B squared}, evaluating $B$ at any character $\beta\in\hat\Gamma$ gives
\[
(\hat\beta(B))^2
=(n-1)I+\sum_{g\in\Gamma}c_g\beta(g)\hat\beta(B)
=(n-1)I+\hat{c}_{\beta^{-1}}\hat\beta(B)
=(n-1)I+\hat{c}_{\beta}\hat\beta(B)
\]
As such, $\hat\alpha(B)^{\circ 2}=\widehat{\alpha^2}(B)$ has minimal polynomial $x^2-\hat{c}_{\alpha^2}x-(n-1)$; the minimal polynomial does not have degree $1$ since $\hat\alpha(B)^{\circ 2}$ is nonzero with zero trace.
Overall, $\hat\alpha(B)$ is a signature matrix of real lines if and only if $\hat{c}_{\alpha^2}=n-2$.
Finally, the argument in \eqref{eq.calpha bound} gives that $\hat{c}_{\alpha^2}=n-2$ if and only if $\alpha^2(g)=1$ for every $g\in\Gamma$ such that $c_g\neq0$.
\end{proof}

\begin{corollary}
\label{cor.connected and odd implies complex}
Let $B$ be an $n\times n$ roux for $\Gamma$ with parameters $\{c_g\}_{g\in \Gamma}$.
Assume that $\Gamma$ has odd order, and that $\langle g : c_g \neq 0 \rangle = \Gamma$.
Then for every nontrivial $\alpha \in \hat{\Gamma}$, $\hat{\alpha}(B)$ is the signature matrix of non-real lines spanning $\mathbb{C}^d$ for some $d \not\in \{1,n-1\}$.
\end{corollary}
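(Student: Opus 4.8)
The plan is to establish the three claims --- that the lines are non-real, that $d\neq 1$, and that $d\neq n-1$ --- one at a time, each time reducing to the elementary fact that in a group of odd order a character value $\alpha(g)$ is a root of unity of odd order, so it is never $-1$ and is real only when it is $1$. The connectedness hypothesis $\langle g:c_g\neq 0\rangle=\Gamma$ is what converts ``$\alpha(g)=1$ whenever $c_g\neq 0$'' into ``$\alpha$ is trivial,'' which is precisely what the degenerate cases will be seen to force.

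For non-realness I would invoke Corollary~\ref{cor.real line detector}: $\hat\alpha(B)$ is the signature matrix of real lines if and only if $\alpha(g)\in\mathbb{R}$ for every $g$ with $c_g\neq 0$. Since $|\Gamma|$ is odd, each such $\alpha(g)$ equals $1$, so realness would give $\Gamma=\langle g:c_g\neq 0\rangle\leq\ker\alpha$ and hence $\alpha$ trivial. Thus for nontrivial $\alpha$ the lines are non-real.

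For the dimension I would first record, via Lemma~\ref{lem.two reps of same lines} and the rank formula of Theorem~\ref{thm.roux primitive idempotents}, that the lines described by $\hat\alpha(B)$ span $\mathbb{C}^d$ with $d=d_{\alpha^{-1}}^+$, and that $1\leq d\leq n-1$ (using $d_{\alpha^{-1}}^+ + d_{\alpha^{-1}}^- = n$, itself a consequence of the rank formula together with $\mu_\alpha^+\mu_\alpha^-=-1/(n-1)$). As in the proof of Lemma~\ref{lem.inverse prob}, $d_{\alpha^{-1}}^+=1$ if and only if $\hat{c}_\alpha=n-2$; but by the equality analysis in \eqref{eq.calpha bound} this forces $\alpha(g)=1$ for all $g$ with $c_g\neq 0$, hence $\alpha$ trivial, so $d\neq 1$. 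For the other extreme, $d=n-1$ is equivalent to $d_{\alpha^{-1}}^-=1$; solving $\mu_{\alpha^{-1}}^-=-1$ in the quadratic $(n-1)\mu^2-\hat{c}_\alpha\mu-1=0$ (equivalently, matching eigenvalue multiplicities in $\mathcal{S}^2=(n-1)I+\hat{c}_\alpha\mathcal{S}$ from \eqref{eq.roux signature matrix} with $\operatorname{tr}\mathcal{S}=0$) shows this holds if and only if $\hat{c}_\alpha=-(n-2)$.

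It remains to rule out $\hat{c}_\alpha=-(n-2)$. Since $c_g\geq 0$ and $\sum_g c_g=n-2$ by Lemma~\ref{lem.B squared}, we have $\operatorname{Re}\hat{c}_\alpha=\sum_g c_g\operatorname{Re}\alpha(g)\geq -(n-2)$, with equality only if $\alpha(g)=-1$ for every $g$ with $c_g\neq 0$. But connectedness (with $\Gamma$ necessarily nontrivial here, so $n\geq 3$) supplies some $g$ with $c_g\neq 0$, and $\alpha(g)=-1$ is impossible in a group of odd order. Hence $\hat{c}_\alpha>-(n-2)$, so $d\neq n-1$, which finishes the proof. I expect the only delicate point to be the bookkeeping behind ``$d=n-1\iff\hat{c}_\alpha=-(n-2)$'' --- getting the complementation $d_{\alpha^{-1}}^+ + d_{\alpha^{-1}}^- = n$ and the sign in the $\mu^-=-1$ computation right; everything else is a short assembly of Corollary~\ref{cor.real line detector}, Lemma~\ref{lem.inverse prob}, and the odd-order observation.
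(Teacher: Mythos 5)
Your proof is correct, but it diverges from the paper's in how it handles the dimension claim. For non-realness you both funnel through Corollary~\ref{cor.real line detector}; the paper phrases the obstruction as ``$\alpha^2$ is nontrivial because $|\Gamma|$ is odd, so $\alpha^2(g)\neq 1$ for some $g$ with $c_g\neq 0$,'' while you phrase it as ``a real root of unity of odd order equals $1$,'' which is the same fact in different clothing. The real difference is what comes next: the paper disposes of $d\notin\{1,n-1\}$ in a single ``in particular,'' relying on the (unstated but standard) observation that equiangular lines spanning a space of dimension $1$ or $n-1$ have normalized signature matrix $\pm(J-I)$ and are therefore automatically real, so non-realness alone finishes the job. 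You instead compute: identify $d$ with $d_{\alpha^{-1}}^+$ via Lemma~\ref{lem.two reps of same lines}, establish $d_{\alpha^{-1}}^+ + d_{\alpha^{-1}}^- = n$ from the rank formula and $\mu^+\mu^- = -1/(n-1)$, and show the two extremes correspond to $\hat{c}_\alpha = n-2$ and $\hat{c}_\alpha = -(n-2)$ respectively, each excluded by the equality analysis in \eqref{eq.calpha bound} together with the odd-order hypothesis (no $\alpha(g)$ can equal $-1$). Your bookkeeping checks out: $\mu_{\alpha^{-1}}^- = -1$ in $(n-1)\mu^2 - \hat{c}_\alpha\mu - 1 = 0$ does give $\hat{c}_\alpha = -(n-2)$, and $\hat{c}_{\alpha^{-1}} = \hat{c}_\alpha$ since the parameters are symmetric. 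What your route buys is a self-contained argument that never invokes the realness of $1$- and $(n-1)$-dimensional line sets; what it costs is length, and it quietly reproves that fact in the special case at hand. Either version is acceptable.
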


\begin{proof}
To begin, denote $A = \{ g \in \Gamma : c_g \neq 0\}$, and observe that any character $\beta \in \hat{\Gamma}$ with $\beta(g) = 1$ for all $g \in A$ is in fact trivial.
Now let $\alpha \in \hat{\Gamma}$ be nontrivial, and take $\beta = \alpha^2$.
Since $\Gamma$ has odd order, $\beta$ is necessarily nontrivial, and so $\alpha^2(g) \neq 1$ for some $g \in A$.
It follows by Corollary~\ref{cor.real line detector} that $\hat{\alpha}(B)$ is not the signature matrix of real lines.
In particular, any lines with signature matrix $\hat{\alpha}(B)$ span a space of dimension $d \not\in \{1,n-1\}$.
\end{proof}

\section{Roux graphs}
\label{sec.roux graphs}

In this section, we identify graph-theoretic properties that are associated with roux.
We start with a review of certain concepts in graph theory (the reader may consult~\cite{Biggs:74,GodsilH:92,GodsilR:13} for further information).
The graphs in this paper will be assumed to be simple without mention, i.e., they will contain neither loops nor multiple edges.
A graph is said to be \textbf{distance-regular} if for every ordered pair of vertices $(u,v)$, the number of vertices that are simultaneously at distance $i$ from $u$ and distance $j$ from $v$ is determined by $i$, $j$, and the distance between $u$ and $v$.
We say a connected graph is \textbf{antipodal} if the vertices can be partitioned into fibres such that the distance between two distinct vertices is the diameter of the graph if and only if they belong to the same fibre.
A \textbf{cover of the complete graph} consists of a graph and a partition of its vertices into fibres of independent sets such that the induced subgraph on the union of any two distinct fibres is a perfect matching.
(In particular, any two fibres have the same size.)
Finally, an \textbf{antipodal cover of the complete graph} is a cover that is an antipodal graph such that the covering fibres coincide with the antipodal fibres.
In what follows, we consider distance-regular antipodal covers of the complete graph (\textsc{drackn}s).

\begin{proposition}[Lemma~3.1 in~\cite{GodsilH:92}]
For every distance-regular antipodal cover $\mathscr{G}$ of the complete graph, there exist constants $(n,r,c)$ for which $\mathscr{G}$ is a connected graph on $rn$ vertices such that
\begin{itemize}
\item[(D1)]
every pair of vertices at distance $2$ has $c$ common neighbors,
\item[(D2)]
the vertices can be partitioned into fibres of size $r$ such that the distance between two vertices is the diameter of the graph if and only if they belong to the same fibre, and
\item[(D3)]
the induced subgraph between any two distinct fibres is a perfect matching.
\end{itemize}
Conversely, any connected graph on $rn$ vertices satisfying (D1)--(D3) for some $(n,r,c)$ is a distance-regular antipodal cover of the complete graph.
\end{proposition}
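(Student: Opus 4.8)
The plan is to treat this as a dictionary between the definition of a distance-regular antipodal cover of $K_n$ (built from the notions of ``antipodal'' and ``cover'' fixed above) and the explicit conditions (D1)--(D3); this is Lemma~3.1 of~\cite{GodsilH:92}, and I would reproduce that argument.

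For the forward implication, let $\pi\colon V(\mathscr{G})\to V(K_n)$ be the covering map. Since $K_n$ is connected, all fibres of $\pi$ are independent sets of a common size $r$, so $|V(\mathscr{G})|=rn$; and because every two distinct vertices of $K_n$ are adjacent, the covering axiom immediately yields (D3). For (D1) I would invoke distance-regularity directly: the number of common neighbours of a pair of vertices depends only on their distance, and I name the value at distance $2$ by $c$. The remaining point is (D2), namely that the $\pi$-fibres \emph{are} the antipodal classes. First, two distinct vertices lying in one $\pi$-fibre are non-adjacent and have no common neighbour, since a vertex outside the fibre is matched to exactly one vertex inside it; hence such a pair is at distance $\ge 3$. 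Next I would show $\operatorname{diam}(\mathscr{G})=3$ when $r\ge 2$: diameter $2$ would make $\mathscr{G}$ strongly regular and antipodal, hence complete multipartite, which is incompatible with (D3); and diameter $\ge 4$ is excluded because, for $u,w$ in a common fibre, the matched partner of $u$ in any third fibre is non-adjacent to $w$ and shares a neighbour with $w$, giving a walk of length $3$ from $u$ to $w$. Granting $\operatorname{diam}(\mathscr{G})=3$, a pair lies in a common $\pi$-fibre if and only if it is at distance $3=\operatorname{diam}(\mathscr{G})$; distance-regularity makes the number of vertices at distance $3$ from a fixed vertex constant, so every such class has size $r$, which is (D2).

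For the converse, suppose $\mathscr{G}$ is connected on $rn$ vertices and satisfies (D1)--(D3). Then each vertex has exactly one neighbour in each of the other $n-1$ fibres, so $\mathscr{G}$ is $(n-1)$-regular, and contracting fibres exhibits $\mathscr{G}$ as a cover of $K_n$ (the fibres are independent once $r\ge 2$, since $\mathscr{G}$ is then not complete; the case $r=1$ is $\mathscr{G}=K_n$). Antipodality is exactly (D2). It remains to verify distance-regularity. I would first pin down the distances: same-fibre pairs have no common neighbour by (D3) and are not adjacent, and not at distance $\ge 4$ by the walk argument above, so they are at distance exactly $3$; non-adjacent vertices in distinct fibres are at distance $2$. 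Then one reads off the intersection numbers: for a distance-$2$ pair the number of common neighbours is $c$ by (D1); for a distance-$3$ (same-fibre) pair it is $0$; for an edge one counts, using (D1) and (D3), that the number of common neighbours is the same for every edge; and the numbers $b_i$ follow from $(n-1)$-regularity and the fibre sizes.

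The step I expect to be the main obstacle, in both directions, is establishing that $\mathscr{G}$ has diameter exactly $3$ --- equivalently, that any two non-adjacent vertices in distinct fibres have a common neighbour --- and extracting from (D1) that the number of common neighbours of an \emph{edge}, not merely of a distance-$2$ pair, is also a constant. These are precisely the places where the interaction of (D1) with the matching structure (D3) and with connectedness is genuinely used; everything else is bookkeeping with fibres and matchings.
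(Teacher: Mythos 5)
The paper does not prove this Proposition; it is quoted from Godsil--Hensel \cite{GodsilH:92} without argument, so there is no internal proof to compare your attempt against. Judged on its own terms, your outline has the right skeleton: the fibres of the covering map are independent sets of a common size $r$, (D3) is immediate because every pair of vertices of $K_n$ is adjacent, (D1) is just the intersection number $c_2$, and the real content is concentrated in identifying the covering fibres with the antipodal classes (equivalently, showing the diameter is $3$) and in deducing distance-regularity from (D1)--(D3). But your writeup stops short of proving exactly those two statements, and your final paragraph says so explicitly. Naming the crux without supplying it leaves both directions of the equivalence unestablished.

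Concretely: (i) your argument that the diameter is $3$ is circular. You assert that for $u,w$ in a common fibre, the matched partner $u'$ of $u$ in a third fibre ``shares a neighbour with $w$''; but $u'$ and $w$ are a non-adjacent pair in distinct fibres, so this is precisely the claim that such pairs are at distance $2$ --- the claim being established. In the converse direction you cannot appeal to distance-regularity for this either, since that is the conclusion. One genuinely needs (D1) here, not just (D3) and connectedness: there are connected graphs with independent fibres and pairwise perfect matchings in which some cross-fibre non-adjacent pair has empty common neighbourhood, so an argument showing how (D1) rules this out is required and is absent. (ii) The constancy of $\lambda=|N(u)\cap N(v)|$ over edges $uv$ (here $N(\cdot)$ denotes the neighbourhood) is asserted with ``one counts, using (D1) and (D3),'' but the count is the lemma's main trick and should appear: the sets $N(u'')$ for $u''$ in the fibre $F$ of $u$ partition $V\setminus F$, and $N(v)\setminus\{u\}$ has $n-2$ elements, each lying in exactly one $N(u'')$; the $r-1$ vertices $u''\neq u$ contribute $c$ apiece once one knows $d(v,u'')=2$, yielding $\lambda=n-2-c(r-1)$. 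Note that this computation itself presupposes item (i), so the two gaps are not independent; until both are filled the proposal is an annotated plan rather than a proof.
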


We refer to the corresponding constants $(n,r,c)$ above as the \textit{parameters} of the \textsc{drackn}.

Given a cover of the complete graph, the fibres suggest a block-matrix expression for the $rn\times rn$ adjacency matrix.
In particular, the $r\times r$ blocks are zero on the diagonal and permutation matrices off the diagonal.
It is convenient to apply a particular graph isomorphism that we refer to as \textbf{normalization}.
Explicitly, let $A_{1j}\in\mathbb{R}^{r\times r}$ denote the $j$th block in the first block row of the adjacency matrix $A$.
Consider the block diagonal matrix $D$ whose $j$th diagonal block is the $r\times r$ identity for $j=1$, and $A_{1j}$ for $j>1$.
We conjugate to obtain the normalized adjacency matrix $DAD^{-1}$, whose off-diagonal blocks in the first block row (and column) are all identity matrices.
Notice that normalization is not uniquely determined up to isomorphism of the cover of the complete graph, but the permutation group $\Gamma\leq S_r$ generated by the permutations represented by the off-diagonal blocks of the normalized adjacency matrix is uniquely determined up to conjugation within $S_r$.
If $\Gamma$ acts regularly on $[r]$, then we say the cover is \textbf{regular}.
If $\Gamma$ is abelian, then we say the cover is \textbf{abelian}.
(We note that our usage of \textit{abelian} is different from the terminology in~\cite{GodsilH:92,CoutinhoCSZ:16}, which calls a cover \textit{abelian} if it is both regular and abelian; we use \textit{regular abelian} in such a setting to highlight both properties of $\Gamma$.
Moreover, in~\cite{GodsilH:92} a \emph{regular cover} is assumed to be a distance-regular graph, but for us it is not.)

In this section, we are particularly interested in regular abelian \textsc{drackn}s and their relationship with roux.
In~\cite{CoutinhoCSZ:16}, Coutinho, Godsil, Shirazi and Zhan establish that, after normalizing the adjacency matrix of a regular abelian \textsc{drackn} and then replacing blocks with corresponding members of $\mathbb{C}[\Gamma]$ to produce a matrix $B\in\mathbb{C}[\Gamma]^{n\times n}$, evaluating the entries of $B$ at any character of $\Gamma$ produces the $n\times n$ signature matrix of an ETF (see also~\cite{FickusJMPW:17}).
This behavior of regular abelian \textsc{drackn}s should be compared with Theorem~\ref{thm.roux signature}.
In a roux scheme, the adjacency matrix $\lceil B\rfloor$ is symmetric by (R3), and therefore describes a graph we call a \textbf{roux graph}.
Notice that the resulting vertex set is $[n] \times \Gamma$, and the roux graph is a cover of the complete graph $K_n$ with fibres of size $|\Gamma|$, each having the form $\{i\} \times \Gamma$.
The following result identifies the relationship between roux graphs and regular abelian \textsc{drackn}s.

\begin{theorem}\
\label{thm.drackn vs roux}
\begin{itemize}
\item[(a)]
Every regular abelian $(n,r,c)$-\textsc{drackn} is a roux graph for a roux with parameters
\begin{equation}
\label{eq.drackn roux parameters}
c_g=\left\{\begin{array}{ll}
n-c(r-1)-2&\text{if }g=\operatorname{id};\\
c&\text{otherwise.}
\end{array}\right.
\end{equation}
\item[(b)]
Given a finite abelian group $\Gamma$, then $B\in\mathbb{C}[\Gamma]^{n\times n}$ satisfies (R1)--(R3) if and only if $\lceil B\rfloor$ is the adjacency matrix of a (not necessarily regular) abelian cover of the complete graph.
\item[(c)]
Given a roux with parameters $\{c_g\}_{g\in\Gamma}$, its roux graph has diameter $3$ if and only if $c_g\neq 0$ for every $g\in\Gamma$, in which case the graph is a regular antipodal cover of the complete graph.
\item[(d)]
The roux graph of any roux with parameters \eqref{eq.drackn roux parameters} and $c \neq 0$ is a regular abelian $(n,r,c)$-\textsc{drackn}.
\end{itemize}
\end{theorem}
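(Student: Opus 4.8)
The plan is to establish the four parts in the order (b), (a), and then (c) and (d) together, since (b) is the dictionary that translates the matrix $\lceil B\rfloor$ into the combinatorics of covers and everything else builds on it. Part (b) is essentially a direct translation. Writing $\lceil B\rfloor$ in $r\times r$ blocks indexed by $[n]\times\Gamma$, the $(g,h)$ entry of the block $\lceil B_{ij}\rfloor$ is the coefficient of $gh^{-1}$ in $B_{ij}$, so $\lceil B_{ij}\rfloor$ is a $0/1$ matrix iff $B_{ij}$ has coefficients in $\{0,1\}$, it is a permutation matrix iff $B_{ij}$ is a single group element, it vanishes iff $B_{ij}=0$, and $\lceil B\rfloor=\lceil B\rfloor^\top$ iff $\lceil B_{ji}\rfloor=\lceil B_{ij}\rfloor^\top$, which for permutation blocks reads $B_{ji}=B_{ij}^{-1}$. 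Hence (R1)--(R3) hold precisely when the block partition presents a loopless simple graph on $[n]\times\Gamma$ with empty diagonal blocks and perfect matchings off the diagonal, i.e.\ precisely when $\lceil B\rfloor$ is the adjacency matrix of a cover of $K_n$; the cover is automatically abelian because the blocks lie in the image of the abelian group $\Gamma$ under its Cayley representation.

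For (a): given an abelian $(n,r,c)$-\textsc{drackn} $\mathscr{G}$, I would first note that connectedness makes the abelian group generated by the off-diagonal blocks act transitively --- hence, being abelian, regularly --- on each fibre, so the standard regular-cover bookkeeping (gauge the blocks along a spanning tree of $K_n$ to the identity) puts the adjacency matrix in the form $\lceil B\rfloor$ for some $B\in\mathbb{C}[\Gamma]^{n\times n}$ with $\Gamma$ abelian, where $B$ manifestly satisfies (R1)--(R3). It then suffices, by Lemma~\ref{lem.B squared}, to compute $B^2$: its diagonal is $(n-1)I$ by \eqref{eq.square's diagonal}, while for $i\neq j$ the coefficient of $gB_{ij}$ in $(B^2)_{ij}$ counts the common neighbours in $\mathscr{G}$ of two vertices of fibres $i$ and $j$ with $xy^{-1}=gB_{ij}$. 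By the \textsc{drackn} axioms such a pair is adjacent when $g=\operatorname{id}$ and at distance $2$ otherwise, so this count is the intersection number $a_1$ if $g=\operatorname{id}$ and the parameter $c$ if $g\neq\operatorname{id}$ --- in particular it depends only on $g$. Distributing, for a fixed edge $\{u,v\}$, the $n-2$ fibres other than those of $u$ and $v$ among the $r$ vertices of $v$'s antipodal class --- on $a_1$ of them $u$ has the same neighbour as $v$, and on exactly $c$ of them it has the same neighbour as each of the $r-1$ other vertices of that class --- gives $a_1=n-c(r-1)-2$. Therefore $B^2=(n-1)I+\sum_g c_g\,gB$ with $\{c_g\}$ as in \eqref{eq.drackn roux parameters} and $c=c_2>0$, so $B$ is a roux with exactly those parameters.

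For (c) and (d): let $B$ be an $n\times n$ roux for $\Gamma$ with parameters $\{c_g\}$ and write $A=\lceil B\rfloor$. Lemma~\ref{lem.B squared} gives $A^2=(n-1)\lceil I\rfloor+\sum_g c_g\lceil gB\rfloor$, and one more multiplication by $A$ (expanding $gB^2$) gives a closed form for $A^3$. Reading off entries: for $i\neq j$, $(A^2)_{(i,x),(j,y)}=c_{xy^{-1}B_{ij}^{-1}}$, where the index is $\operatorname{id}$ precisely when the two vertices are adjacent, so distinct-fibre vertices are either adjacent or, when that parameter is positive, at distance $2$; meanwhile $(A^2)_{(i,x),(i,y)}=0$ and $(A^3)_{(i,x),(i,y)}=(n-1)\,c_{xy^{-1}}$ for $x\neq y$, so same-fibre vertices sit at distance exactly $3$ exactly when the corresponding parameter $c_{xy^{-1}}$ is positive. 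This gives the characterization of diameter $3$ asserted in (c) in terms of positivity of the roux parameters, and under that condition the fibres are precisely the distance-$3$ (antipodal) classes, so $A$ is an antipodal cover of $K_n$; it is a regular cover because $\Gamma$ acts on $[n]\times\Gamma$ by the translations $(i,x)\mapsto(i,gx)$, which are automorphisms of $A$ since $\Gamma$ is abelian, fix every fibre, and act regularly on it, while connectedness forces the whole group of fibre-preserving automorphisms to act freely (a fibre-preserver fixing one vertex fixes all its neighbours, hence everything), hence regularly. This is (c). For (d), parameters of the form \eqref{eq.drackn roux parameters} with $c>0$ satisfy the hypothesis of (c), so $A$ is an antipodal regular cover of $K_n$ on $rn$ vertices with fibres of size $r$ --- axiom (D2), with (D3) immediate from (b) --- and any two vertices at distance $2$, necessarily in distinct fibres and non-adjacent, have $c_{xy^{-1}B_{ij}^{-1}}=c$ common neighbours (since $xy^{-1}B_{ij}^{-1}\neq\operatorname{id}$), which is (D1); with abelianness this makes $A$ an abelian $(n,r,c)$-\textsc{drackn}.

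The steps I expect to be the real obstacle are the intersection-number identity $a_1=n-c(r-1)-2$ and the preliminary reduction of an abelian \textsc{drackn} to a regular abelian cover (so that its adjacency matrix has the form $\lceil B\rfloor$ at all); everything else is bookkeeping with the closed forms for $A^2$ and $A^3$ furnished by Lemma~\ref{lem.B squared}.
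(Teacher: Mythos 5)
Your proof is correct and follows the same skeleton as the paper's: part (b) as a direct dictionary, part (a) via Lemma~\ref{lem.B squared} applied to an explicit computation of $B^2$, and parts (c)--(d) via the closed forms for $\lceil B\rfloor^2$ and $\lceil B\rfloor^3$. The difference is that where the paper outsources three steps to Godsil--Hensel~\cite{GodsilH:92} --- Corollary~7.5 there for the identity $B^2=(n-1)I+a_1B+c\sum_{g\neq\operatorname{id}}gB$ with $a_1=n-c(r-1)-2$ and again for part (d), and Lemma~7.2 (together with Lemma~\ref{lem.inverse prob}) for regularity of the cover in part (c) --- you supply self-contained substitutes: the intersection-number identity by distributing the $n-2$ neighbours of $u$ outside the two relevant fibres over the $r$ vertices of $v$'s fibre ($a_1$ landing on $v$ and $c$ on each of the other $r-1$), regularity by exhibiting the free transitive $\Gamma$-action by translations plus the observation that a fibre-preserving automorphism fixing a vertex fixes its unique neighbour in every other fibre and hence (by connectedness) everything, and part (d) by checking (D1)--(D3) against the quoted proposition. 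You also correctly flag and handle the preliminary reduction that the paper leaves implicit, namely that connectedness makes the abelian block group act transitively, hence regularly, on each fibre, so that the adjacency matrix really has the form $\lceil B\rfloor$. One small point in your favor: your entry $(A^3)_{(i,x),(i,y)}=(n-1)c_{xy^{-1}}$ is the correct value; the paper's displayed formula for $\lceil B\rfloor^3$ drops a factor of $n-1$ on the $\sum_g c_g\lceil gI\rfloor$ term, though this does not affect the positivity argument either way. The net effect of your route is a proof that does not depend on the reader unpacking~\cite{GodsilH:92}, at the cost of a somewhat longer argument.
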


\begin{proof}
For (a), we may normalize the adjacency matrix to obtain $A$, whose off-diagonal blocks are represented by permutations that generate a group $\Gamma\leq S_r$.
Replacing blocks of $A$ with corresponding members of $\mathbb{C}[\Gamma]$ produces the matrix $B\in\mathbb{C}[\Gamma]^{n\times n}$.
We will use a result from~\cite{GodsilH:92}, but first, we translate notation: our $B_{ij}$ equals their $f(i,j)$, and our $B$ equals their $A^f$.
With this in mind, Corollary~7.5 in~\cite{GodsilH:92} gives
\[
B^2
=(n-1)I+(a_1-c)B+c\sum_{g\in\Gamma}g(J-I)
=(n-1)I+\sum_{g\in\Gamma}c_ggB,
\]
with $c_\mathrm{id}=a_1=n-c(r-1)-2$ and $c_g=c$ otherwise.
By Lemma~\ref{lem.B squared}, $B$ is a roux.
Next, (b) is immediate.
For (c), we first apply Lemma~\ref{lem.B squared} to get
\begin{align}
\label{eq.roux graph squared}
\lceil B\rfloor^2
&=(n-1)\lceil I\rfloor+\sum_{g\in\Gamma}c_g\lceil gB\rfloor,\\
\label{eq.roux graph cubed}
\lceil B\rfloor^3
&=(n-1)\lceil B\rfloor+\sum_{g\in\Gamma}c_g\lceil gI\rfloor+\sum_{g,h\in\Gamma}c_gc_h\lceil ghB\rfloor.
\end{align}
Then (R1) and \eqref{eq.roux graph squared} give that distinct vertices in a common fibre have distance at least $3$.
Furthermore, \eqref{eq.roux graph cubed} implies that all such vertices have distance $3$ precisely when every $c_g$ is strictly positive.
This proves ($\Rightarrow$).
For ($\Leftarrow$), it remains to show that points in different fibres have distance at most $3$ when every $c_g$ is strictly positive.
In fact, \eqref{eq.roux graph squared} gives that all such vertices have distance at most $2$.
This stronger conclusion implies that the roux graph is an antipodal cover.
To see that we have a regular cover, consider the normalization $\overline{B}=DBD^*$ of $B$.
Here, $\lceil \overline{B}\rfloor$ is the normalization of $\lceil B\rfloor$ since $\lceil \cdot\rfloor$ is a $*$-homomorphism.
Lemma~\ref{lem.inverse prob} implies that the off-diagonal entries of $\overline{B}$ generate all of $\Gamma$ since $c_g \neq 0$ for all $g\in\Gamma$.
Also, $\Gamma$ acts regularly on itself, and so $\lceil \overline{B}\rfloor$ is the adjacency matrix of a regular cover, as desired.
Finally, for (d), notice that the roux graph is a regular abelian antipodal cover of the complete graph by (c).
It remains to prove it is distance regular, and this follows from Lemma~3.1 in~\cite{GodsilH:92}.
\end{proof}

At this point, we identify an example that demonstrates that the theory of roux extends beyond \textsc{drackn}s and Higman pairs.
In particular, note that the roux constructed from antisymmetric conference matrices in Lemma~\ref{lem.conference roux} do not arise from \textsc{drackn}s since the $c_g$'s for non-identity $g\in C_4$ are not all equal.
Consider the following iterative construction of antisymmetric conference matrices (based on Theorem~14 in~\cite{KoukouvinosS:08}):
\[
M_1:=\left[\begin{array}{rr}0&1\\-1&0\end{array}\right],
\qquad
M_{k+1}:=\left[\begin{array}{cc}M_k&M_k+I\\M_k-I&-M_k\end{array}\right].
\]
One may verify in GAP~\cite{GAP:software,Hanaki:online} that the roux corresponding to $M_4$ is not Schurian.
By Theorem~\ref{thm.schurian roux}, the roux scheme does not arise from a Higman pair.

\begin{theorem}[Regular abelian \textsc{drackn}s from roux]
\label{thm.drackns from roux}
Given a connected $\Gamma$-roux graph of order $n|\Gamma|$, then for every odd prime $p$ dividing $|\Gamma|$, there exists a regular abelian $(n,p,c)$-\textsc{drackn} for some $c$.
\end{theorem}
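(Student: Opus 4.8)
\section*{Proof proposal}

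The plan is to push $B$ down to a roux for $C_p$ and then recognize that roux as a \textsc{drackn}. Write $B$ for an $n\times n$ roux for $\Gamma$, with parameters $\{c_g\}_{g\in\Gamma}$, whose roux graph $\lceil B\rfloor$ is the given connected graph. Since $\Gamma$ is finite abelian with $p\mid|\Gamma|$, there is a surjective homomorphism $\varphi\colon\Gamma\to C_p$, and by Lemma~\ref{lem.basic roux trans}(c) the matrix $B':=\bar\varphi(B)$ is an $n\times n$ roux for $C_p$ with parameters $c'_\lambda=\sum_{g\in\varphi^{-1}(\lambda)}c_g$; these sum to $n-2$ by Lemma~\ref{lem.B squared}. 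The map $(i,g)\mapsto(i,\varphi(g))$ is a vertex-surjective graph homomorphism $\lceil B\rfloor\to\lceil B'\rfloor$, so $\lceil B'\rfloor$ is connected. In view of Theorem~\ref{thm.drackn vs roux}(d), it then suffices to show that the parameters $c'_\lambda$ with $\lambda\neq 0$ all equal a common value $c>0$ (the identity parameter automatically equals $n-c(p-1)-2$, as the $c'_\lambda$ sum to $n-2$).

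For the equality I would argue as follows. Identify $C_p$ with $\mathbb{Z}/p$, fix a primitive $p$th root of unity $\zeta_p$, and let $\beta\in\widehat{C_p}$ be $\lambda\mapsto\zeta_p^{\lambda}$, so that $\widehat{c'}_{\beta^j}=\sum_\lambda c'_\lambda\zeta_p^{-j\lambda}=\sigma_j(\widehat{c'}_\beta)$, where $\sigma_j\in\operatorname{Gal}(\mathbb{Q}(\zeta_p)/\mathbb{Q})$ sends $\zeta_p\mapsto\zeta_p^{j}$. For $j\not\equiv 0$, Theorem~\ref{thm.roux signature} makes $\widehat{\beta^j}(B')$ the signature matrix of an ETF of $n$ vectors in some $\mathbb{C}^{d_j}$ with $1\le d_j\le n-1$; comparing \eqref{eq.roux signature matrix} with the Welch-equality Gram matrix exactly as in the proof of Corollary~\ref{cor.roux lines integrality} gives $\widehat{c'}_{\beta^j}=\operatorname{sign}(n-2d_j)\sqrt{q_j}$ with $q_j=(n-2d_j)^2(n-1)/\big(d_j(n-d_j)\big)\in\mathbb{Q}$. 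Thus $(\widehat{c'}_\beta)^2\in\mathbb{Q}$, and $\widehat{c'}_\beta\in\mathbb{R}$ because $c'_{-\lambda}=c'_\lambda$ (Lemma~\ref{lem.B squared}), so $\widehat{c'}_\beta$ is a real algebraic integer of degree at most $2$ over $\mathbb{Q}$ inside $\mathbb{Q}(\zeta_p)$. Suppose, for contradiction, that $\widehat{c'}_\beta\notin\mathbb{Q}$. Then $\widehat{c'}_\beta$ generates the unique quadratic subfield $\mathbb{Q}(\sqrt{p^{*}})$ of $\mathbb{Q}(\zeta_p)$ with $p^{*}=(-1)^{(p-1)/2}p$, and being a real algebraic integer with rational square it equals $\pm t\sqrt{p^{*}}$ for some positive integer $t$; in particular each $q_j=t^{2}p^{*}$. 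If $p\equiv 3\pmod 4$ this is negative, contradicting $q_j\ge 0$. If $p\equiv 1\pmod 4$, then $\sum_{j\not\equiv 0}\widehat{c'}_{\beta^j}=\operatorname{Tr}_{\mathbb{Q}(\zeta_p)/\mathbb{Q}}(\widehat{c'}_\beta)=0$ (the trace of $\pm t\sqrt p$ vanishes), so inverting the Fourier transform gives $c'_0=\tfrac1p\big((n-2)+\sum_{j\not\equiv 0}\widehat{c'}_{\beta^j}\big)=\tfrac{n-2}{p}$, forcing $p\mid n-2$; but also the rank $d_\beta^{+}$ of the primitive idempotent $\mathcal{G}_\beta^{+}$ of Theorem~\ref{thm.roux primitive idempotents} is a positive integer, so $(\mu_\beta^{+})^2\in\mathbb{Q}$, which forces $p\big(t^{2}p+4(n-1)\big)$ to be a perfect square; since $t^{2}p$ has odd $p$-adic valuation while $4(n-1)$ has valuation $v_p(n-1)$, this is impossible unless $p\mid n-1$. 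Then $p\mid\gcd(n-1,n-2)=1$, a contradiction. Hence $\widehat{c'}_\beta\in\mathbb{Q}$, so $\widehat{c'}_{\beta^j}=\widehat{c'}_\beta=:s$ for all $j\not\equiv 0$, and inverting the Fourier transform yields $p\,c'_\lambda=(n-2)+s\sum_{j\not\equiv 0}\zeta_p^{j\lambda}=(n-2)-s$ for each $\lambda\neq 0$; thus $c'_\lambda=(n-2-s)/p=:c$ is independent of $\lambda\neq 0$.

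Finally I would check $c>0$. As a roux parameter $c$ is a nonnegative integer (Lemma~\ref{lem.B squared}), and if $c=0$ then all non-identity $c'_g$ vanish, so by \eqref{eq.roux graph squared} $\lceil B'\rfloor^{2}=(n-1)\lceil I\rfloor+(n-2)\lceil B'\rfloor$, i.e.\ $\big(\lceil B'\rfloor-(n-1)\lceil I\rfloor\big)\big(\lceil B'\rfloor+\lceil I\rfloor\big)=0$; thus $\lceil B'\rfloor$ has exactly two distinct eigenvalues and, being connected, is complete. But $\lceil B'\rfloor$ is $(n-1)$-regular on $np>n$ vertices (an abelian cover of $K_n$ by Theorem~\ref{thm.drackn vs roux}(b)), hence not complete since $p\ge 3$ — a contradiction. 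Therefore $c>0$, so $B'$ has parameters of the form \eqref{eq.drackn roux parameters}, and Theorem~\ref{thm.drackn vs roux}(d) exhibits the connected roux graph $\lceil B'\rfloor$ as an abelian $(n,p,c)$-\textsc{drackn}. I expect the main obstacle to be the middle paragraph — specifically, ruling out that $\widehat{c'}_\beta$ is a genuine quadratic irrationality when $p\equiv 1\pmod 4$, which is where one must combine the Welch bound, the integrality of the primitive-idempotent ranks, and $p$-adic valuation bookkeeping; the reduction to $C_p$, the transport of connectedness, and the positivity of $c$ should be comparatively routine given the tools already developed.
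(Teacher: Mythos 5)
Your proof is correct, and it departs from the paper's at the decisive step. The shared part is the reduction: both arguments push $B$ down to a roux $B'$ for $C_p$ via Lemma~\ref{lem.basic roux trans}(c) and transport connectedness (the paper does this by noting that if all non-identity parameters of $B'$ vanished, then $\langle g : c_g\neq 0\rangle\leq\operatorname{ker}\varphi\lneq\Gamma$ would contradict Corollary~\ref{cor.connectedness}; your surjective-graph-homomorphism argument reaches the same place). From there the paper is short: it applies Corollary~\ref{cor.connected and odd implies complex} to rule out the degenerate dimensions $d\in\{1,n-1\}$ and then cites Theorem~5.1 of~\cite{CoutinhoCSZ:16} as a black box to convert a prime-root-of-unity ETF signature matrix into an abelian \textsc{drackn}. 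You instead prove the relevant special case of that external theorem from scratch: the Galois action $\widehat{c'}_{\beta^j}=\sigma_j(\widehat{c'}_\beta)$, rationality of $(\widehat{c'}_\beta)^2$ via the Welch identity from Corollary~\ref{cor.roux lines integrality}, and the exclusion of a genuine quadratic irrationality --- by the sign of $(-1)^{(p-1)/2}p$ when $p\equiv 3\pmod 4$, and, when $p\equiv 1\pmod 4$, by combining the trace computation (which gives $p\mid n-2$) with the $p$-adic valuation of $p\bigl(t^2p+4(n-1)\bigr)$, forced to be a perfect square by integrality of $d_\beta^+$. All of these steps check out, as does your $c=0$ contradiction at the end, which also quietly absorbs the case $d_j=1$ that the paper excludes up front. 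What your route buys is a self-contained argument living entirely inside the roux framework, finishing with Theorem~\ref{thm.drackn vs roux}(d); what the paper's route buys is brevity, at the price of importing an external theorem whose proof is, in spirit, the same Galois/integrality argument you wrote out.
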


The proof of this theorem leverages the spectrum of roux graphs.

\begin{theorem}
\label{thm.roux eigenvalues}
Given an $n\times n$ roux over an abelian group $\Gamma$ and having parameters $c=\{c_g\}_{g\in\Gamma}$, the corresponding roux graph has eigenvalues $\lambda_{\alpha}^\epsilon$
given by
\[
\lambda_{\alpha}^\epsilon
=\frac{\hat{c}_\alpha+\epsilon\sqrt{(\hat{c}_\alpha)^2+4(n-1)}}{2},
\qquad
(\alpha\in\hat\Gamma,~\epsilon\in\{+,-\}).
\]
Furthermore, the mapping $(\hat{c}_\alpha,\epsilon) \mapsto \lambda_\alpha^\epsilon$ is injective on $\{ \hat{c}_\alpha : \alpha \in \hat{\Gamma} \} \times \{ +,- \}$.
\end{theorem}

\begin{proof}
Let $B$ denote the underlying roux.
Then projection onto any eigenspace of $\lceil B\rfloor$ is an idempotent of the corresponding roux scheme.
Considering Theorem~\ref{thm.roux primitive idempotents}, the eigenvalues of $\lceil B\rfloor$ are therefore the scalars $\lambda_\alpha^\epsilon$ such that $\lceil B\rfloor\mathcal{G}_\alpha^\epsilon=\lambda_\alpha^\epsilon\mathcal{G}_\alpha^\epsilon$.
The definition of $\mathcal{G}_\alpha^\epsilon$ and Lemma~\ref{lem.B squared} together give $\operatorname{tr}(\lceil B\rfloor\mathcal{G}_\alpha^\epsilon)=rn(n-1)\mu_\alpha^\epsilon=(n-1)\mu_\alpha^\epsilon\operatorname{tr}(\mathcal{G}_\alpha^\epsilon)$, and so we must have $\lambda_\alpha^\epsilon=(n-1)\mu_\alpha^\epsilon$.
The definition of $\mu_\alpha^\epsilon$ in Theorem~\ref{thm.roux primitive idempotents} then gives the formula for the eigenvalues.
For injectivity, choose $\epsilon \in \{ +, - \}$ and observe that the function $f \colon \mathbb{R} \to \mathbb{R}$ given by $f(x):=\frac{1}{2}( x + \epsilon \sqrt{x^2 + 4(n-1)} )$ is one-to-one since $2f'(x) = 1 + \epsilon x (x^2+4(n-1) )^{-1/2} > 0$.
In particular, $\lambda_\alpha^\epsilon \neq \lambda_\beta^\epsilon$ whenever $\hat{c}_\alpha \neq \hat{c}_\beta$.
Injectivity of the mapping $(\hat{c}_\alpha,\epsilon) \mapsto \lambda_\alpha^\epsilon$ follows since $\lambda_\alpha^+ > 0 \geq \lambda_\beta^-$ for any $\alpha,\beta \in \hat{\Gamma}$.
\end{proof}

In words, the above theorem gives that the spectrum of a roux graph is a nonlinear function of the Fourier spectrum of the parameters of the underlying roux.
As is standard in spectral graph theory, the spectrum can be leveraged to identify combinatorial structure in the graph.
See Theorem~\ref{thm.four eigs} below for an instance of this phenomenon.

\begin{corollary}
\label{cor.connectedness}
Let $B$ be a roux for $\Gamma$ with parameters $\{c_g\}_{g\in \Gamma}$, and let $\Lambda = \langle g \colon c_g \neq 0 \rangle$.
Then the corresponding roux graph has exactly $[ \Gamma : \Lambda ]$ connected components.
Furthermore, the following are equivalent:
\begin{itemize}
\item[(a)]
The roux graph of $B$ is connected.
\item[(b)]
The roux graph of $B$ is a regular abelian cover of the complete graph.
\item[(c)]
$\Lambda = \Gamma$.
\item[(d)]
$B$ is not switching equivalent to a roux for any proper subgroup of $\Gamma$.
\end{itemize}
\end{corollary}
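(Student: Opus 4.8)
The plan is to compute the number of connected components spectrally, and then to deduce the three-way equivalence from Lemma~\ref{lem.inverse prob}.

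First I would note that the roux graph is $(n-1)$-regular: the matrix $\lceil B\rfloor$ has zero diagonal blocks and permutation-matrix off-diagonal blocks, so every row sums to $n-1$ (and $n\geq 2$, since the roux parameters are nonnegative integers summing to $n-2$ by Lemma~\ref{lem.B squared}). For any $k$-regular simple graph, the number of connected components equals the multiplicity of $k$ as an eigenvalue of the adjacency matrix: the indicator vectors of the components give that many independent $k$-eigenvectors, while Perron--Frobenius applied to each component bounds the multiplicity from above. So it suffices to find the multiplicity of $n-1$ as an eigenvalue of $\lceil B\rfloor$.

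By Theorem~\ref{thm.roux primitive idempotents}, the space $\mathbb{C}^{n|\Gamma|}$ decomposes orthogonally as $\bigoplus_{\alpha\in\hat\Gamma,\ \epsilon\in\{+,-\}}\operatorname{range}(\mathcal{G}_\alpha^\epsilon)$, with $\lceil B\rfloor$ acting on $\operatorname{range}(\mathcal{G}_\alpha^\epsilon)$ as the scalar $\lambda_\alpha^\epsilon$ of Theorem~\ref{thm.roux eigenvalues}, and the corresponding dimension is $d_\alpha^\epsilon$. Hence the multiplicity of $n-1$ equals $\sum d_\alpha^\epsilon$ over the pairs with $\lambda_\alpha^\epsilon=n-1$. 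Since $\hat c_\alpha$ is real and $\sqrt{(\hat c_\alpha)^2+4(n-1)}>|\hat c_\alpha|$, we have $\lambda_\alpha^-<0<n-1$, so only $\epsilon=+$ can contribute, and a short computation shows that $\lambda_\alpha^+=(n-1)\mu_\alpha^+=n-1$ is equivalent to $\hat c_\alpha=n-2$. By the bound~\eqref{eq.calpha bound} from the proof of Lemma~\ref{lem.inverse prob}, $\hat c_\alpha=n-2$ holds exactly when $\alpha(g)=1$ for every $g$ with $c_g\neq 0$, i.e.\ exactly when $\Lambda\leq\ker\alpha$, i.e.\ exactly when $\alpha$ is pulled back from a character of $\Gamma/\Lambda$; there are $[\Gamma:\Lambda]$ such $\alpha$. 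For each of them $\mu_\alpha^+=1$ and hence $d_\alpha^+=n/(1+(n-1))=1$. Therefore $n-1$ has multiplicity $[\Gamma:\Lambda]$, which is the number of connected components.

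The equivalences are now immediate: the graph is connected iff $[\Gamma:\Lambda]=1$ iff $\Lambda=\Gamma$, giving (a)$\Leftrightarrow$(b); and for (b)$\Leftrightarrow$(c), Lemma~\ref{lem.inverse prob} gives that the normalization $\bar B\sim B$ is a roux for $\Lambda\leq\Gamma$, and that any roux $\tilde B\sim B$ for a subgroup $\tilde\Lambda\leq\Gamma$ satisfies $\Lambda\leq\tilde\Lambda$, so $B$ is switching equivalent to a roux for a proper subgroup precisely when $\Lambda$ is itself proper, i.e.\ when $\Lambda\neq\Gamma$. The only delicate point in the whole argument is the multiplicity bookkeeping in the spectral step—verifying that $n-1$ is fed only by the $\epsilon=+$ idempotents indexed by $\widehat{\Gamma/\Lambda}$ and that each such idempotent has rank exactly $1$, so that the multiplicity is the number of such characters rather than a weighted count; everything else is routine. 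One could alternatively argue topologically, viewing $\lceil B\rfloor$ as the adjacency matrix of a $\Gamma$-cover of $K_n$ (Theorem~\ref{thm.drackn vs roux}(b)), identifying the local voltage group with $\langle B_{ij}B_{jk}B_{ki}:i\neq j\neq k\neq i\rangle=\Lambda$ since triangles span the cycle space of $K_n$ (as in the proof of Lemma~\ref{lem.inverse prob}), and invoking that a cover of a connected graph has as many components as the index of its local group.
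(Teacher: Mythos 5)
Your proof is correct and follows essentially the same route as the paper: compute the multiplicity of the eigenvalue $n-1$ of the $(n-1)$-regular roux graph via Theorem~\ref{thm.roux eigenvalues}, identify the contributing characters with the annihilator of $\Lambda$ using \eqref{eq.calpha bound}, and deduce the equivalences from Lemma~\ref{lem.inverse prob}. If anything, your bookkeeping is slightly more careful than the paper's, since you explicitly verify that each contributing idempotent $\mathcal{G}_\alpha^+$ has rank $d_\alpha^+=1$, so the multiplicity is an unweighted count of characters.
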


\begin{proof}
Denote $n$ for the size of $B$ and $\mathscr{G}$ for the corresponding roux graph.
We begin by verifying the formula for the number of connected components.
Since $\mathscr{G}$ is an $(n-1)$-regular graph, it suffices to compute the multiplicity of the eigenvalue $n-1$. 
Using the notation of Theorem~\ref{thm.roux eigenvalues}, it is straightforward to verify that $\lambda_\alpha^\epsilon = n-1$ if and only if $\epsilon = +$ and $\hat{c}_\alpha = n-2$.
As in \eqref{eq.calpha bound}, the latter happens if and only if $\alpha(g) = 1$ whenever $c_g \neq 0$, if and only if $\alpha$ lies in the annihilator $\Lambda^* \leq\hat{\Gamma}$.
Consequently, the multiplicity of $n-1$ as an eigenvalue of $\mathscr{G}$ equals $|\Lambda^*| = [\Gamma : \Lambda]$.
This gives the desired formula for the number of connected components.
The equivalence of (a), (c), and (d) follows immediately from Lemma~\ref{lem.inverse prob}.
Furthermore, (c) implies (b) as in the proof of Theorem~\ref{thm.drackn vs roux}(c).
Conversely, suppose (b) holds, and let $\overline{B}$ be the normalization of $B$.
By Lemma~\ref{lem.inverse prob}, the off-diagonal entries of $\overline{B}$ generate $\Lambda$.
Since $\lceil \overline{B} \rfloor$ is the normalization of $\lceil B \rfloor$, it follows that $\Lambda$ has a regular permutation representation on $r$ points.
In particular, $|\Lambda| = r = |\Gamma|$.
This proves (c).
\end{proof}

\begin{theorem}
\label{thm.four eigs}
Let $\mathscr{G}$ be a roux graph that is connected, antipodal, and not the complete graph.
Then $\mathscr{G}$ is distance regular if and only if it has exactly four eigenvalues.
\end{theorem}

\begin{proof}
($\Rightarrow$)
Corollary~\ref{cor.connectedness} implies that $\mathscr{G}$ is a regular abelian cover of the complete graph.
By Theorem~\ref{thm.drackn vs roux}(a), $\mathscr{G}$ could be obtained from a roux with parameters \eqref{eq.drackn roux parameters}.
Taking the Fourier transform gives that there exists $t$ such that
\begin{equation}
\label{eq.fft of drackn parameters}
\hat{c}_\alpha
=\left\{\begin{array}{cl}
n-2&\text{if }\alpha=1;\\
t&\text{otherwise}.
\end{array}\right.
\end{equation}
We claim that $t \neq n-2$, in which case $\hat{c}_\alpha$ has exactly two values, and so Theorem~\ref{thm.roux eigenvalues} gives that $\mathscr{G}$ has exactly four eigenvalues.
Otherwise, taking the inverse Fourier transform shows that $c_g = 0$ for every $g \neq \operatorname{id}$.
In that case, the adjacency matrix $A$ of $\mathscr{G}$ satisfies $A^2 \in \operatorname{span}\{I,A\}$, and consequently $A^k\in \operatorname{span}\{I,A\}$ for every $k > 1$.
This is impossible since $\mathscr{G}$ is connected and not complete.

($\Leftarrow$)
Let $B \in \mathbb{C}[\Gamma]^{n\times n}$ be a roux with parameters $\{c_g\}_{g\in \Gamma}$ whose roux graph $\mathscr{G}$ is connected, antipodal, not complete, and has exactly four eigenvalues.
The graph is $(n-1)$-regular, and so $n-1$ is an eigenvalue with all-ones eigenvector.
In fact, $\lambda_1^+=n-1$ since $\hat{c}_1=n-2$.
The multiplicity of $n-1$ is $1$ since $\mathscr{G}$ is connected, and all other eigenvalues $\lambda$ satisfy $|\lambda|<n-1$.
Since $\hat{c}_\alpha\mapsto\lambda_\alpha^+$ in Theorem~\ref{thm.roux eigenvalues} is strictly increasing, we therefore have $\hat{c}_\alpha<n-2$ for every $\alpha\neq 1$.
Furthermore, by Theorem~\ref{thm.roux eigenvalues}, every value of $\hat{c}_\alpha$ produces two distinct eigenvalues.
Since our roux graph has exactly four eigenvalues, there must be exactly two distinct values of $\hat{c}_\alpha$, that is, there exists $t$ such that \eqref{eq.fft of drackn parameters} holds.
As above, $t \neq n-2$ since $\mathscr{G}$ is connected and not complete.
Applying the inverse Fourier transform then produces the roux parameters of a regular abelian \textsc{drackn} with $c \neq 0$, and so we are done by Theorem~\ref{thm.drackn vs roux}(d).
\end{proof}

\begin{proof}[Proof of Theorem~\ref{thm.drackns from roux}]
Let $B$ denote the underlying roux for $\Gamma$.
As a consequence of the classification of finitely generated abelian groups, there exists a surjection $\varphi\colon\Gamma\to C_p$.
By Lemma~\ref{lem.basic roux trans}(c), we have that $\bar\varphi(B)$ is a roux for $C_p$ with parameters $\bar{c}_\lambda=\sum_{g\in\varphi^{-1}(\lambda)}c_g$ for $\lambda\in C_p$.
Furthermore, we must have $\bar{c}_\lambda\neq0$ for some $\lambda\neq0$ since otherwise $\langle c_g:g\neq0\rangle\leq\operatorname{ker}\varphi\lneq\Gamma$, which contradicts connectedness by Corollary~\ref{cor.connectedness}.
Since $p$ is odd, Corollary~\ref{cor.connected and odd implies complex} implies that any nontrivial $\beta\in\widehat{C_p}$ produces a signature matrix $\mathcal{S}=\hat\beta(\bar\varphi(B))$ of an ETF in $\mathbb{C}^d$ for some $d\neq1$.
In particular, the corresponding equiangular lines saturate the relative bound.
Finally, the off-diagonal entries of $\mathcal{S}$ are $p$th roots of unity, and so Theorem~5.1 in~\cite{CoutinhoCSZ:16} gives the result.
\end{proof}

\section{Applications and examples}
\label{sec.apps and exs}

In this section, we identify consequences of our theory for regular abelian covers of the complete graph and maximal equiangular tight frames.
Additionally, we summarize every construction of doubly transitive lines that we currently know.

\subsection{Consequences for regular abelian covers of the complete graph}

We start by observing that regular abelian \textsc{drackn}s satisfy a stronger version of Theorem~\ref{thm.roux signature}:

\begin{theorem}
\label{thm.drackn signature}
Suppose $B\in\mathbb{C}[\Gamma]^{n\times n}$ satisfies (R1)--(R3).
Then $\lceil B\rfloor$ is the adjacency matrix of a regular abelian $(n,r,c)$-\textsc{drackn} if and only if for every $\alpha\in\hat{\Gamma}$, $\hat\alpha(B)$ is the signature matrix of an equiangular tight frame for $\mathbb{C}^d$, where
\[
d:=\frac{n}{1+(n-1)\mu^2},
\qquad
\mu:=\frac{\delta+\sqrt{\delta^2+4(n-1)}}{2(n-1)},
\qquad
\delta:=n-rc-2.
\]
\end{theorem}

Note that ($\Rightarrow$) corresponds to Theorem~4.1 in~\cite{CoutinhoCSZ:16}, but our theory allows for a quick proof of both directions simultaneously.

\begin{proof}[Proof of Theorem~\ref{thm.drackn signature}]
By Theorem~\ref{thm.drackn vs roux}, $\lceil B\rfloor$ is the adjacency matrix of a regular abelian \textsc{drackn} if and only if $B$ has roux parameters
\[
c_g
=\left\{\begin{array}{ll}
n-c(r-1)-2&\text{if }g=\operatorname{id};\\
c&\text{otherwise,}
\end{array}\right.
\]
if and only if $\hat{c}$ is given by
\begin{equation}
\label{eq.fourier of drackn parameters}
\hat{c}_\alpha
=\left\{\begin{array}{ll}
n-2&\text{if }\alpha=1;\\
\delta&\text{otherwise.}
\end{array}\right.
\end{equation}
Recall that $\hat{c}_\alpha\mapsto\lambda_\alpha^+$ is injective by Theorem~\ref{thm.roux eigenvalues}.
Also, we have $\lambda_\alpha^+= (n-1)\mu_\alpha^+>0$, and furthermore, the mapping $\mu_\alpha^+\mapsto d_\alpha^+$ given in Theorem~\ref{thm.roux primitive idempotents} is strictly increasing.
It follows that $\hat{c}_\alpha\mapsto d_\alpha^+$ is also injective.
Thus, \eqref{eq.fourier of drackn parameters} holds if and only if $B$ is a roux with $d_\alpha^+=d$ for all nontrivial $\alpha\in\hat\Gamma$ (by Theorem~\ref{thm.roux primitive idempotents}), if and only if for every $\alpha\in\hat{\Gamma}$, $\hat\alpha(B)$ is the signature matrix of an equiangular tight frame for $\mathbb{C}^d$ (by Theorem~\ref{thm.roux signature} and Lemma~\ref{lem.two reps of same lines}).
\end{proof}

Much like roux lines, we say lines are \textsc{drackn} if their signature matrix can be obtained by evaluating a regular abelian \textsc{drackn}'s roux at a character (as suggested by the previous theorem).

\begin{corollary}[\textsc{drackn} lines detector]
\label{cor.drackn lines detector}
Let $\mathscr{L}$ be a sequence of lines having normalized signature matrix $\mathcal{S}$.
Then $\mathscr{L}$ forms \textsc{drackn} lines if and only if
\begin{itemize}
\item[(a)]
every off-diagonal entry of $\mathcal{S}$ is an $r$th root of unity for some minimal $r>0$, and
\item[(b)]
there exist $\theta$ and $\tau$ such that for every $k\in\{1,\ldots,r-1\}$, $\mathcal{S}^{\circ k}$ has spectrum $\{\theta,\tau\}$.
\end{itemize}
\end{corollary}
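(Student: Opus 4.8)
The plan is to deduce this from the roux lines detector (Corollary~\ref{cor.roux lines detector}) together with the parametric characterization of abelian \textsc{drackn}s among roux graphs in Theorem~\ref{thm.drackn vs roux}(d). Two elementary observations will be used throughout. First, since $\mathcal{S}$ is normalized with off-diagonal entries roots of unity, the Hadamard power $\mathcal{S}^{\circ k}$ (for $k\geq 1$) depends only on $k\bmod r$, equalling one of $\mathcal{S}^{\circ 1},\dots,\mathcal{S}^{\circ(r-1)}$ when $r\nmid k$ and equalling $J-I$ when $r\mid k$. Second, by Lemma~\ref{lem.B squared}, evaluating a roux $B$ at a character $\beta$ yields a signature matrix with $\hat\beta(B)^2=(n-1)I+\hat c_\beta\,\hat\beta(B)$, so (being nonzero, self-adjoint and traceless) $\hat\beta(B)$ has exactly the two eigenvalues that are the roots of $x^2-\hat c_\beta x-(n-1)$.

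For the ($\Leftarrow$) direction I would first feed (a) and (b) into Corollary~\ref{cor.roux lines detector}: by the first observation, every Hadamard power of $\mathcal{S}=\bar{\mathcal{S}}$ has exactly two eigenvalues (either the common pair $\{\theta,\tau\}$ from (b) or the pair $\{n-1,-1\}$ coming from $J-I$), so Corollary~\ref{cor.roux lines detector} produces a roux $B\in\mathbb{C}[C_r]^{n\times n}$, namely $B_{ii}=0$ and $B_{ij}=\delta_{\mathcal{S}_{ij}}$, for which $\mathscr{L}$ are roux lines with signature matrix $\hat\alpha(B)=\mathcal{S}$, where $\alpha$ is the canonical generator of $\widehat{C_r}$. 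The crux is to upgrade ``roux'' to ``\textsc{drackn}''. Using $\widehat{\alpha^k}(B)=\mathcal{S}^{\circ k}$ and the second observation, (b) forces $\hat c_{\alpha^k}=\theta+\tau=:t$ for all $k\in\{1,\dots,r-1\}$, i.e.\ $\hat c_\beta=t$ for every nontrivial $\beta\in\widehat{C_r}$, while $\hat c_1=n-2$; an inverse Fourier transform then gives $c_g=(n-2-t)/r=:c$ for $g\neq 1$ and $c_1=n-c(r-1)-2$, exactly the shape~\eqref{eq.drackn roux parameters}. Finally $c\geq 0$ because roux parameters are nonnegative (Lemma~\ref{lem.B squared}), and $c=0$ would force $\mathcal{S}$ to have spectrum $\{n-1,-1\}$, hence to be switching equivalent---and therefore, being normalized, equal---to $J-I$, forcing $r=1$; so for $r\geq 2$ we get $c>0$, and Theorem~\ref{thm.drackn vs roux}(d) identifies $\lceil B\rfloor$ as an abelian $(n,r,c)$-\textsc{drackn}, whence $\mathscr{L}$ are \textsc{drackn} lines. (The excluded case $r=1$ is degenerate, with $\mathcal{S}=J-I$.)

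For ($\Rightarrow$) I would unwind the definition: if $\mathscr{L}$ are \textsc{drackn} lines, some signature matrix of $\mathscr{L}$ equals $\hat\alpha(B')$ for an abelian \textsc{drackn} roux $B'$ for a group $\Gamma$ and a character $\alpha\in\hat\Gamma$. Normalizing the first row and column exhibits each off-diagonal entry of $\mathcal{S}$ as a product of three entries of $\hat\alpha(B')$, hence an element of the finite cyclic group $\operatorname{im}(\alpha)$, giving (a). For (b), the computation in the proof of Corollary~\ref{cor.roux lines detector} shows $\mathcal{S}^{\circ k}$ is switching equivalent to $\widehat{\alpha^k}(B')$; for $k\in\{1,\dots,r-1\}$ the character $\alpha^k$ is nontrivial (otherwise $\widehat{\alpha^k}(B')=J-I$, making $\mathcal{S}^{\circ k}=J-I$ and contradicting minimality of $r$), so the \textsc{drackn} shape~\eqref{eq.drackn roux parameters} of the parameters of $B'$ makes $\hat c_{\alpha^k}$ equal to a constant $t=n-cr-2$ independent of $k$; by the second observation, $\mathcal{S}^{\circ k}$ then has the fixed spectrum $\{\theta,\tau\}$ consisting of the two roots of $x^2-tx-(n-1)$. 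This is (b).

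I expect the main obstacle to be the passage, in the ($\Leftarrow$) direction, from the purely spectral hypothesis (b) to the rigid combinatorial form~\eqref{eq.drackn roux parameters} of the roux parameters: one must chase the hypothesis from $\mathcal{S}$ to the constructed roux $B$ to its parameters $\{c_g\}$ to their Fourier transform $\{\hat c_\beta\}$, track which characters are nontrivial, and rule out the degenerate possibility $c=0$ (equivalently $r=1$). Once the parameters are pinned to the \textsc{drackn} shape with $c>0$, Theorem~\ref{thm.drackn vs roux}(d) closes the argument at once.
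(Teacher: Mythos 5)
Your proposal is correct and follows essentially the same route as the paper: both directions pass through the roux lines detector (Corollary~\ref{cor.roux lines detector}), identify $\mathcal{S}^{\circ k}$ with $\widehat{\alpha^k}(B)$, translate the two-eigenvalue condition into the constancy of $\hat{c}_\beta$ over nontrivial characters via the quadratic $x^2-\hat{c}_\beta x-(n-1)$, and invoke Theorem~\ref{thm.drackn vs roux}(d) after inverting the Fourier transform. If anything, you are slightly more careful than the paper in pinning down $c>0$ and the degenerate case $r=1$, but the argument is the same.
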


\begin{proof}
($\Rightarrow$)
Let $B$ denote the underlying roux, and let $\Gamma$ be the underlying abelian group.
Notice that $\mathcal{S}=\hat\alpha(B)$ for some character $\alpha\in\hat\Gamma$, and $\mathcal{S}^{\circ k}=\widehat{\alpha^k}(B)$ for every $k$.
By Corollary~\ref{cor.roux lines detector}, it suffices to show that $\hat\beta(B)$ has the same minimal polynomial for every nontrivial character $\beta\in\hat\Gamma$.
Since $\hat{c}_\beta$ has the form \eqref{eq.fft of drackn parameters}, then Lemma~\ref{lem.B squared} gives $(\hat\beta(B))^2=(n-1)I+t\hat\beta(B)$ for every nontrivial $\beta$.
Since each $\hat\beta(B)$ is nonzero with zero trace, we then have that the minimal polynomial of $\hat\beta(B)$ is $x^2-tx-(n-1)$ for every nontrivial $\alpha$, as desired.

($\Leftarrow$)
By Corollary~\ref{cor.roux lines detector}, $\mathscr{L}$ forms roux lines for the group $C_r$ with roux $B\in\mathbb{C}[C_r]^{n\times n}$ defined by $B_{ii}=0$ for $i\in[n]$ and $B_{ij}=\delta_{\mathcal{S}_{ij}}$ for $i,j\in[n]$ with $i\neq j$.
(As usual, $\delta_g$ denotes the image of $g\in C_r$ in $\mathbb{C}[C_r]$.)
The proof of Theorem~\ref{thm.roux signature} (specifically \eqref{eq.etf to roux 2}) gives that $(\hat\alpha(B))^2=(n-1)I+\hat{c}_\alpha\hat\alpha(B)$ for every character $\alpha\in\hat\Gamma$.
Since each $\hat\alpha(B)$ is nonzero with zero trace, we then have that the minimal polynomial of $\hat\alpha(B)$ is $x^2-\hat{c}_\alpha x-(n-1)$ for every $\alpha\in\hat\Gamma$.
By assumption, this minimal polynomial is the same for every nontrivial $\alpha$, and so $\hat{c}_\alpha$ has the form \eqref{eq.fft of drackn parameters}.
Applying the inverse Fourier transform then produces the roux parameters of~\eqref{eq.drackn roux parameters}, and so $B$ defines a regular abelian \textsc{drackn} by Theorem~\ref{thm.drackn vs roux}(d).
\end{proof}

Any signature matrix $\mathcal{S}$ comprised of prime roots of unity that has a quadratic minimal polynomial $p\in\mathbb{Q}[x]$ necessarily satisfies (b) above, and therefore corresponds to a regular abelian \textsc{drackn}.
Indeed, in this case, taking the $k$th Hadamard power of $\mathcal{S}$ is equivalent to applying a field automorphism of $\mathbb{Q}(e^{2\pi\mathrm{i}/r})$ entrywise, which fixes $p$.
While signature matrices from \textsc{drackn} lines necessarily have a quadratic minimal polynomial in $\mathbb{Q}[x]$, namely $x^2-(n-rc-2)x-(n-1)$, one may remove the polynomial's rationality from the hypothesis here (see Theorem~5.1 in~\cite{CoutinhoCSZ:16}).
Comparing Corollary~\ref{cor.drackn lines detector} with Corollary~\ref{cor.roux lines detector}, we see that \textsc{drackn} lines are the roux lines for which Hadamard powers of the normalized signature matrix correspond to ETFs in a common dimension.
In this sense, this completes the picture of ``lines from covers'' and ``covers from lines'' introduced in~\cite{CoutinhoCSZ:16}.
Next, we provide a stronger version of Corollary~\ref{cor.roux lines integrality} for \textsc{drackn} lines:

\begin{corollary}
\label{cor.drackn lines integrality}
Suppose there exist $n>d$ \textsc{drackn} lines for $\Gamma$ spanning $\mathbb{C}^d$, and put
\[
q=\frac{(n-2d)^2(n-1)}{d(n-d)}.
\]
Then $\sqrt{q}\in\mathbb{Z}$.
\end{corollary}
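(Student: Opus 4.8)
The plan is to leverage the integrality result already in hand, Corollary~\ref{cor.roux lines integrality}, which gives $q\in\mathbb Z$ and $\sqrt q\in\mathbb Z[\omega]$ for roux lines, and then to sharpen ``$\sqrt q\in\mathbb Z[\omega]$'' to ``$\sqrt q\in\mathbb Z$'' using the special shape of the roux parameters of a \textsc{drackn}. Since $\mathscr{L}$ is a sequence of \textsc{drackn} lines for $\Gamma$, there is an abelian \textsc{drackn} with roux $B\in\mathbb{C}[\Gamma]^{n\times n}$ and a character $\alpha\in\hat\Gamma$ such that, up to switching, $\mathscr{L}$ has signature matrix $\mathcal S=\hat\alpha(B)$ and unit-norm representatives whose Gram matrix $I+\mu\mathcal S$ (with $\mu>0$) is an equiangular tight frame for $\mathbb C^d$. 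By Theorem~\ref{thm.drackn vs roux}(a), the roux parameters of $B$ have the form~\eqref{eq.drackn roux parameters}: they are nonnegative integers, constant (equal to some $c$) on $\Gamma\setminus\{\mathrm{id}\}$. My strategy is to write the quadratic satisfied by $\mathcal S$ in two ways and compare.

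First I would invoke \eqref{eq.roux signature matrix} (a consequence of Lemma~\ref{lem.B squared}): $\mathcal S^2=(n-1)I+t\,\mathcal S$ with $t=\sum_{g\in\Gamma}c_g\,\alpha(g)$. Since $c_g$ is constant off the identity, this coefficient collapses via $\sum_{g\neq\mathrm{id}}\alpha(g)=-1$ for nontrivial $\alpha$ and $\sum_{g\neq\mathrm{id}}\alpha(g)=r-1$ for trivial $\alpha$: thus $t=c_{\mathrm{id}}-c$ when $\alpha\neq 1$ and $t=\sum_g c_g=n-2$ when $\alpha=1$, and in either case $t\in\mathbb Z$. Next I would rerun the computation from the proof of Corollary~\ref{cor.roux lines integrality}: equality in the Welch bound~\eqref{eq.welch bound} forces the ETF Gram matrix to satisfy $\mathcal G^2=(n/d)\mathcal G$, which expressed in terms of $\mathcal S$ reads $\mathcal S^2=(n-1)I+\operatorname{sign}(n-2d)\sqrt q\cdot\mathcal S$. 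Because $\mathcal S$ has zero diagonal and nonzero off-diagonal entries, $I$ and $\mathcal S$ are linearly independent, so comparing the two quadratics gives $\operatorname{sign}(n-2d)\sqrt q=t$.

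Finally I would square: $q=t^2$, so $q$ is a perfect square and $\sqrt q=|t|\in\mathbb Z$ (when $n=2d$ one has $q=0$, which is trivially a square). I do not foresee a real obstacle; the only delicate point is the collapse of the coefficient $t$ to a rational integer. For a general roux, $\sum_{g\in\Gamma}c_g\alpha(g)$ is merely an algebraic integer in $\mathbb Z[\omega]$ — which is precisely why Corollary~\ref{cor.roux lines integrality} can only conclude $\sqrt q\in\mathbb Z[\omega]$ — and it is exactly the constancy of the non-identity roux parameters, the feature singling out \textsc{drackn}s among roux by Theorem~\ref{thm.drackn vs roux}, that kills the non-rational part and forces $\sqrt q$ to be an integer.
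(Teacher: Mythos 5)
Your proof is correct and follows essentially the same route as the paper: both compare the Welch-bound quadratic $\mathcal S^2=(n-1)I+\operatorname{sign}(n-2d)\sqrt q\,\mathcal S$ with the roux quadratic $\mathcal S^2=(n-1)I+\hat c_\alpha\mathcal S$ and then use the integrality and constancy of the \textsc{drackn} roux parameters to force $\hat c_\alpha\in\mathbb Z$. The only cosmetic difference is that you evaluate the Fourier transform forward ($\hat c_\alpha=c_{\mathrm{id}}-c$ via $\sum_{g\neq\mathrm{id}}\alpha(g)=-1$) whereas the paper inverts it to recover $c_g$ and cites its integrality.
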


\begin{proof}
Let $B$ be a roux for $\Gamma$ whose roux graph is a regular abelian \textsc{drackn}, and let $d\neq 1$ be the constant dimension for which $\hat\alpha(B)$ is the signature matrix of an ETF in $\mathbb{C}^d$ for every nontrivial $\alpha\in\hat\Gamma$ (given in Theorem~\ref{thm.drackn signature}).
Following the proof of Corollary~\ref{cor.roux lines integrality}, we have
\[
(\hat\alpha(B))^2
=(n-1)I+\operatorname{sign}(n-2d)\cdot\sqrt{q}\cdot\hat\alpha(B).
\]
Comparing with \eqref{eq.roux signature matrix} then gives
\[
\hat{c}_\alpha
=\left\{\begin{array}{ll}
n=2&\text{if }\alpha=1;\\
\operatorname{sign}(n-2d)\sqrt{q}&\text{otherwise.}
\end{array}\right.
\]
We invert the Fourier transform to get
\[
c_g
=\frac{1}{|\Gamma|}\sum_{\alpha\in\hat\Gamma}\hat{c}_\alpha \alpha(g)
=\frac{1}{|\Gamma|}\Big(n-2-\operatorname{sign}(n-2d)\sqrt{q}\Big),
\qquad
(g\neq 1).
\]
Since $c_g$ is an integer by Lemma~\ref{lem.B squared}, we are done.
\end{proof}

Corollary~\ref{cor.drackn lines integrality} rules out the existence of many regular abelian \textsc{drackn}s.
For example, Table~\ref{table.drackns} lists regular abelian \textsc{drackn} parameters that meet the necessary conditions with $n\leq 500$ and $r$ an odd prime.
For the sake of reproducibility, we provide our methodology for constructing this table:
\begin{itemize}
\item[1.]
Find pairs $(d,n)$ with $n \leq 500$ satisfying Corollary~\ref{cor.drackn lines integrality}.
By Corollary~\ref{cor.connected and odd implies complex}, we may ignore $d \in \{1,n-1\}$.
Following Gerzon's bound~\cite{LemmensS:73}, we also require $n \leq \min\{d^2,(n-d)^2\}$.
\item[2.]
Find odd primes $r$ dividing $n$ for which there exists $c \in \mathbb{Z}$ corresponding to $d$.
The fact that $r$ necessarily divides $n$ is given by Theorem~9.2 in~\cite{GodsilH:92}.
\item[3.]
Check additional constraints from~\cite{GodsilH:92}, as summarized by Theorem~3.1 in~\cite{CoutinhoCSZ:16}.
\end{itemize}
We note that Theorem~\ref{thm.drackns from roux} establishes how Table~\ref{table.drackns} can be used to preclude the existence of regular abelian \textsc{drackns} (and more generally, connected roux graphs) over groups of odd order.
For example, since $n=64$ does not appear in Table~\ref{table.drackns}, any connected roux graph with $n=64$ must necessarily be over a group $\Gamma$ whose order is a power of $2$.
(In fact, the next subsection constructs such a roux with $\Gamma=C_4$.)
As another perspective, Table~\ref{table.drackns} and Theorem~\ref{thm.drackns from roux} together indicate several directions for future research.
Indeed, if there is a connected roux graph with $n \leq 500$ for an abelian group $\Gamma$ of order other than a power of $2$, then $n$ must appear in a row of Table~\ref{table.drackns} with every odd prime $r$ dividing $|\Gamma|$.
As such, repeated values of $n$ suggest the possible existence of roux for groups of composite order.

\begin{table}
\begin{center}
\begin{tabular}{rrrrrl}
$d$ & $n$ & $r$ & $c$ & $\delta$ & Existence \\ \hline
6 & 9 & 3 & 3 & $-2$ & \cite{FickusJMPW:17, Godsil:93, KlinPech:11, Tsiovkina:17} \\
15 & 25 & 5 & 5 & $-2$ & \cite{FickusJMPW:17, Godsil:93, KlinPech:11} \\
11 & 33 & 3 & 9 & 4 \\
21 & 36 & 3 & 12 & $-2$ & \cite{KlinPech:11} \\
12 & 45 & 3 & 12 & 7 & \cite{KlinPech:11}\\
33 & 45 & 5 & 10 & $-7$ \\
28 & 49 & 7 & 7 & $-2$ & \cite{FickusJMPW:17, Godsil:93, KlinPech:11} \\
34 & 51 & 3 & 18 & $-5$ \\
22 & 55 & 5 & 10 & 3 & \cite{FickusJMP:18} \\
52 & 65 & 5 & 15 & $-12$ & \cite{FickusJMPW:17, Tsiovkina:17}\\
45 & 81 & 3 & 27 & $-2$ & \cite{FickusJMPW:17, KlinPech:11} \\
65 & 91 & 7 & 14 & $-9$ \\
76 & 96 & 3 & 36 & $-14$ \\
33 & 99 & 3 & 30 & 7 \\
55 & 100 & 5 & 20 & $-2$ & \cite{KlinPech:11}\\
14 & 105 & 3 & 27 & 22 \\
40 & 105 & 7 & 14 & 5 & \cite{FickusJMP:18} \\
65 & 105 & 3 & 36 & $-5$ \\
35 & 120 & 3 & 36 & 10 \\
66 & 121 & 11 & 11 & $-2$ & \cite{FickusJMPW:17, Godsil:93, KlinPech:11} \\
105 & 126 & 3 & 48 & $-20$ & \cite{FickusJMPW:17, Tsiovkina:17} \\
86 & 129 & 3 & 45 & $-8$ \\
78 & 144 & 3 & 48 & $-2$ & \cite{KlinPech:11} \\
29 & 145 & 5 & 25 & 18 \\
46 & 161 & 7 & 21 & 12 \\
91 & 169 & 13 & 13 & $-2$ & \cite{FickusJMPW:17, Godsil:93, KlinPech:11} \\
30 & 175 & 5 & 30 & 23 \\
145 & 175 & 7 & 28 & $-23$ \\
133 & 190 & 5 & 40 & $-12$ \\
105 & 196 & 7 & 28 & $-2$ & \cite{KlinPech:11} \\
67 & 201 & 3 & 63 & 10 \\
77 & 210 & 5 & 40 & 8 \\
133 & 210 & 3 & 72 & $-8$ \\
186 & 217 & 7 & 35 & $-30$ \\
120 & 225 & 3 & 75 & $-2$ \\
120 & 225 & 5 & 45 & $-2$ \\
175 & 225 & 3 & 81 & $-20$ \\
70 & 231 & 3 & 72 & 13 \\
161 & 231 & 11 & 22 & $-13$ \\
\end{tabular}
\qquad
\begin{tabular}{rrrrrl}
$d$ & $n$ & $r$ & $c$ & $\delta$ & Existence \\ \hline
162 & 243 & 3 & 84 & $-11$ \\
41 & 246 & 3 & 72 & 28 \\
92 & 253 & 11 & 22 & 9 & \cite{FickusJMP:18} \\
52 & 273 & 7 & 35 & 26 \\
221 & 273 & 3 & 99 & $-26$ \\
217 & 280 & 5 & 60 & $-22$ \\
42 & 288 & 3 & 84 & 34 \\
153 & 289 & 17 & 17 & $-2$ & \cite{FickusJMPW:17, Godsil:93, KlinPech:11} \\
177 & 295 & 5 & 60 & $-7$ \\
129 & 301 & 7 & 42 & 5 \\
88 & 320 & 5 & 60 & 18 \\
171 & 324 & 3 & 108 & $-2$ & \cite{KlinPech:11} \\
225 & 325 & 13 & 26 & $-15$ \\
260 & 325 & 5 & 70 & $-27$ \\
113 & 339 & 3 & 108 & 13 \\
78 & 351 & 3 & 108 & 25 \\
126 & 351 & 13 & 26 & 11 & \cite{FickusJMP:18} \\
225 & 351 & 3 & 120 & $-11$ \\
190 & 361 & 19 & 19 & $-2$ & \cite{FickusJMPW:17, Godsil:93, KlinPech:11} \\
117 & 378 & 3 & 120 & 16 \\
261 & 378 & 7 & 56 & $-16$ \\
33 & 385 & 5 & 65 & 58 \\
55 & 385 & 7 & 49 & 40 \\
105 & 385 & 11 & 33 & 20 \\
154 & 385 & 5 & 75 & 8 \\
262 & 393 & 3 & 135 & $-14$ \\
210 & 400 & 5 & 80 & $-2$ & \cite{KlinPech:11} \\
145 & 406 & 7 & 56 & 12 \\
56 & 441 & 7 & 56 & 47 \\
231 & 441 & 3 & 147 & $-2$ \\
231 & 441 & 7 & 63 & $-2$ \\
385 & 441 & 3 & 162 & $-47$ \\
369 & 451 & 11 & 44 & $-35$ \\
391 & 460 & 5 & 100 & $-42$ \\
370 & 481 & 13 & 39 & $-28$ \\
253 & 484 & 11 & 44 & $-2$ & \cite{KlinPech:11} \\
97 & 485 & 5 & 90 & 33 \\
209 & 495 & 3 & 162 & 7 \\
286 & 495 & 5 & 100 & $-7$ \\
\end{tabular}
\end{center}
\caption{
\label{table.drackns}
{\small 
Regular abelian \textsc{drackn} parameters $(n,r,c)$ that meet the necessary conditions with $n\leq 500$ and $r$ an odd prime.
Any such regular abelian \textsc{drackn} would necessarily produce equiangular tight frames of $n$ vectors in $\mathbb{C}^d$.
Next, $\delta=n-rc-2$ is a parameter of interest defined in~\cite{GodsilH:92}.
If such a regular abelian \textsc{drackn} is known by the authors to exist, its construction can be found in the reference(s) cited in the column labeled ``Existence.''
}\normalsize}
\end{table}

\subsection{Consequences for maximal equiangular tight frames}

Gerzon's bound implies that an equiangular tight frame in $\mathbb{C}^d$ necessarily has $n\leq d^2$ vectors~\cite{LemmensS:73}.
For this reason, ETFs that saturate this bound are known as \textbf{maximal} ETFs in the frame theory community.
It turns out that maximal ETFs find applications in quantum information theory, where they are known as symmetric, informationally complete positive operator--valued measures~\cite{FuchsS:11}.
Interestingly, Hadamard powers appear naturally in the context of maximal ETFs:

\begin{proposition}[Corollary~19 in~\cite{Waldron:17}]
\label{prop.cousin}
Given a maximal equiangular tight frame $\{\varphi_i\}_{i\in[d^2]}$ for $\mathbb{C}^d$ with signature matrix $\mathcal{S}$, then $\{\varphi_i^{\otimes 2}\}_{i\in[d^2]}$ forms an equiangular tight frame for its $\binom{d+1}{2}$-dimensional span with signature matrix $\mathcal{S}^{\circ 2}$.
\end{proposition}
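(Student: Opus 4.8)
The plan is to nail down two different bilinear invariants of the sequence $\{\varphi_i^{\otimes 2}\}_{i\in[d^2]}$: its Gram matrix, which delivers equiangularity and the signature matrix $\mathcal{S}^{\circ 2}$, and its frame operator, which delivers tightness and the dimension $\binom{d+1}{2}$. For the first, since $\{\varphi_i\}_{i\in[d^2]}$ is an ETF for $\mathbb{C}^d$ with $n=d^2$, equality in the Welch bound \eqref{eq.welch bound} gives $|\langle\varphi_i,\varphi_j\rangle|^2=\tfrac{1}{d+1}$ for $i\neq j$, so its Gram matrix is $\mathcal{G}=I+\mu\mathcal{S}$ with $\mu=1/\sqrt{d+1}$. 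The Gram matrix of $\{\varphi_i^{\otimes2}\}$ has $(i,j)$ entry $\langle\varphi_j,\varphi_i\rangle^2=(\mathcal{G}^{\circ2})_{ij}$, and because $\mathcal{S}$ has zero diagonal we get $\mathcal{G}^{\circ2}=(I+\mu\mathcal{S})^{\circ2}=I+\mu^2\mathcal{S}^{\circ2}=I+\tfrac{1}{d+1}\mathcal{S}^{\circ2}$. As $\mathcal{S}^{\circ2}$ is self-adjoint with zero diagonal and unimodular off-diagonal entries, this already exhibits $\{\varphi_i^{\otimes2}\}$ as unit vectors that are equiangular with signature matrix exactly $\mathcal{S}^{\circ2}$; only tightness and the span dimension remain.

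For tightness I would pass to the frame operator $S:=\sum_{i\in[d^2]}(\varphi_i^{\otimes2})(\varphi_i^{\otimes2})^*=\sum_{i\in[d^2]}(\varphi_i\varphi_i^*)^{\otimes2}$ rather than try to square $\mathcal{G}^{\circ2}$. Let $\Pi$ be the orthogonal projection of $\mathbb{C}^d\otimes\mathbb{C}^d$ onto $\operatorname{Sym}^2(\mathbb{C}^d)=\operatorname{span}\{v\otimes v:v\in\mathbb{C}^d\}$, so $\operatorname{tr}\Pi=\binom{d+1}{2}$; since each $\varphi_i^{\otimes2}$ lies in this subspace we have $\Pi S=S\Pi=S$, and $\operatorname{range}(S)$ equals the span of $\{\varphi_i^{\otimes2}\}$. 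Put $c:=\tfrac{2d}{d+1}$, chosen so that $\operatorname{tr}(S)=d^2=c\operatorname{tr}\Pi$. I would then record $\operatorname{tr}(S)=d^2$ (all diagonal entries of $\mathcal{G}^{\circ2}$ equal $1$), $\operatorname{tr}(S\Pi)=\operatorname{tr}(S)=d^2$, and, using $\operatorname{tr}\big((\varphi_i\varphi_i^*)^{\otimes2}(\varphi_j\varphi_j^*)^{\otimes2}\big)=|\langle\varphi_i,\varphi_j\rangle|^4$,
\[
\operatorname{tr}(S^2)=\sum_{i,j\in[d^2]}|\langle\varphi_i,\varphi_j\rangle|^4=d^2+\frac{d^2(d^2-1)}{(d+1)^2}=\frac{2d^3}{d+1}.
\]
These are calibrated precisely so that $\operatorname{tr}\big((S-c\Pi)^2\big)=\operatorname{tr}(S^2)-2c\operatorname{tr}(S)+c^2\operatorname{tr}\Pi=0$, and since $S-c\Pi$ is self-adjoint this forces $S=c\Pi$. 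Hence $\{\varphi_i^{\otimes2}\}$ is a tight frame whose span is $\operatorname{range}(\Pi)=\operatorname{Sym}^2(\mathbb{C}^d)$, of dimension $\binom{d+1}{2}$; together with the first paragraph, $\{\varphi_i^{\otimes2}\}$ is an equiangular tight frame for its $\binom{d+1}{2}$-dimensional span with signature matrix $\mathcal{S}^{\circ2}$.

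The one genuine obstacle is recognizing that tightness cannot be obtained by manipulating $I+\tfrac{1}{d+1}\mathcal{S}^{\circ2}$ directly, because $(\mathcal{S}^{\circ2})^2$ is not a polynomial in $\mathcal{S}$ and its spectrum is not visible from that of $\mathcal{S}$; instead one must route the argument through the frame operator and the symmetric-subspace projection, which is exactly the classical fact that a maximal ETF is a complex projective $2$-design (equivalently, that $\sum_{i,j}|\langle\varphi_i,\varphi_j\rangle|^4=\tfrac{2n^2}{d(d+1)}$ when $n=d^2$). Once that identity is in hand, the remainder is routine bookkeeping with traces and tensor products.
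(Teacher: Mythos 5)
Your proof is correct and complete. The paper itself does not prove this proposition --- it imports it as Corollary~19 of Waldron's paper --- so there is no internal argument to compare against; your route is the standard one underlying that reference: a maximal ETF is a projective $2$-design, which is exactly the statement that $\sum_i(\varphi_i\varphi_i^*)^{\otimes 2}$ is a scalar multiple of the projection onto $\operatorname{Sym}^2(\mathbb{C}^d)$. All the numerics check out: $\mu^2=\tfrac{1}{d+1}$ from Welch equality with $n=d^2$, the identity $\mathcal{G}^{\circ 2}=I+\mu^2\mathcal{S}^{\circ 2}$ (valid because $\mathcal{S}$ has zero diagonal, so there are no cross terms entrywise), $\operatorname{tr}(S^2)=d^2+\tfrac{d^2(d^2-1)}{(d+1)^2}=\tfrac{2d^3}{d+1}$, and the vanishing of $\operatorname{tr}\bigl((S-c\Pi)^2\bigr)$ with $c=\tfrac{2d}{d+1}$, which for a self-adjoint matrix forces $S=c\Pi$. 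Your closing remark is also the right diagnosis: tightness of the tensored system is not visible from the spectrum of $\mathcal{S}^{\circ 2}$ alone (indeed, for a generic ETF the Hadamard square of the signature matrix has more than two eigenvalues), so the detour through the frame operator and the symmetric-subspace projector is genuinely necessary and is precisely where maximality ($n=d^2$) gets used.
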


It is widely believed that maximal ETFs exist in every dimension~\cite{Flammia:online,FuchsHS:17}.
This is the subject of Zauner's conjecture~\cite{Zauner:99}.
The following result establishes the extent to which maximal ETFs arise as \textsc{drackn} lines.

\begin{corollary}
\label{cor.no sics}
There do not exist $d^2$ \textsc{drackn} lines spanning $\mathbb{C}^d$.
There exist $d^2$ \textsc{drackn} lines spanning $\mathbb{C}^{d^2-d}$ only if $d=3$.
\end{corollary}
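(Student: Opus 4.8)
The plan is to combine Waldron's Hadamard‑square identity for maximal ETFs (Proposition~\ref{prop.cousin}) with the fact that squaring a \textsc{drackn} signature matrix entrywise cannot escape a two‑element set of dimensions. Corollary~\ref{cor.drackn lines integrality} by itself will not suffice: with $n=d^2$ it only forces $(d-2)^2(d+1)$ to be a perfect square, i.e.\ $d\in\{2,3,8,15,\dots\}$. The extra leverage comes from a rigidity observation that I would record first: if $\mathcal{S}$ is the signature matrix of an ETF then $\mathcal{S}$ is Hermitian with exactly two eigenvalues $\lambda_1>0>\lambda_2$, the only admissible Gram matrix is $I+|\lambda_2|^{-1}\mathcal{S}$, and the ETF dimension is the multiplicity of $\lambda_1$; hence $\mathcal{S}$ determines that dimension uniquely, and the same applies to $\mathcal{S}^{\circ 2}$ (again Hermitian, zero diagonal, unimodular off‑diagonal).

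Now suppose the hypotheses of either assertion hold. The lines are roux lines, so by Theorem~\ref{thm.2tran-roux-etf}(b) their unit‑norm representatives form an ETF; there is an abelian \textsc{drackn} roux $B$ for some $\Gamma$ and a character $\alpha\in\hat\Gamma$ with normalized signature matrix $\mathcal{S}=\hat\alpha(B)$, and $B$ has a distinguished dimension $D$ with $\hat\beta(B)$ an ETF signature matrix for $\mathbb{C}^{D}$ for every nontrivial $\beta$ (Theorem~\ref{thm.drackn signature}). Since the span has dimension $\geq 2$, $\alpha$ is nontrivial, and rigidity forces $D$ to equal the span dimension ($D=d$ for the first assertion, $D=d^2-d$ for the second). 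Moreover $\mathcal{S}^{\circ 2}=\widehat{\alpha^2}(B)$: if $\alpha^2\neq 1$ this is an ETF signature matrix for $\mathbb{C}^{D}$, while if $\alpha^2=1$ then $\mathcal{S}^{\circ 2}=\hat{1}(B)=J-I$, an ETF signature matrix for $\mathbb{C}^{1}$. Either way $\mathcal{S}^{\circ 2}$ is an ETF signature matrix for $\mathbb{C}^{D'}$ with $D'\in\{1,D\}$.

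Finally I would feed in Waldron. For the first assertion the $d^2$ lines themselves form a \emph{maximal} ETF for $\mathbb{C}^{d}$, so Proposition~\ref{prop.cousin} makes $\mathcal{S}^{\circ 2}$ an ETF signature matrix for $\mathbb{C}^{\binom{d+1}{2}}$; comparing with $D'\in\{1,d\}$ via rigidity gives $\binom{d+1}{2}\leq d$, hence $d\leq 1$, contradicting $d\geq 2$. For the second assertion, since the span has dimension $d^2-d$, the Naimark complement of the ETF is an ETF of $n=d^2$ unit vectors in $\mathbb{C}^{d}$ --- a maximal ETF --- with signature matrix $-\mathcal{S}$ (a one‑line computation: if $P$ is the rank‑$(d^2-d)$ projection proportional to the Gram matrix, then $d(I-P)$ is the Gram matrix of the complement). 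Applying Proposition~\ref{prop.cousin} to this complement makes $(-\mathcal{S})^{\circ 2}=\mathcal{S}^{\circ 2}$ an ETF signature matrix for $\mathbb{C}^{\binom{d+1}{2}}$, so $\binom{d+1}{2}\in\{1,\,d^2-d\}$, i.e.\ $\tfrac{d(d+1)}{2}\in\{1,\,d(d-1)\}$, whose only solution with $d\geq 2$ is $d=3$.

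The step I expect to be most delicate is ensuring that the two dimensions attached to $\mathcal{S}^{\circ 2}$ --- the \textsc{drackn} one and the Waldron one --- are literally the same invariant; this is exactly what the rigidity observation guarantees. A smaller point to get right is the degenerate character $\alpha^2=1$, which is disposed of by the identification $\hat{1}(B)=J-I$.
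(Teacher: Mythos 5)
Your proof is correct and follows essentially the same route as the paper: both arguments pin down $\mathcal{S}^{\circ 2}$ via Waldron's Proposition~\ref{prop.cousin} (passing to the Naimark complement for the second assertion) and play this against the \textsc{drackn} constraint that the Hadamard square must describe the same equiangular tight frame as $\mathcal{S}$ itself. The only cosmetic difference is that you compare the \emph{dimensions} attached to the two descriptions of $\mathcal{S}^{\circ 2}$ (via your rigidity observation together with Theorem~\ref{thm.drackn signature}), whereas the paper compares their \emph{spectra} directly using Corollary~\ref{cor.drackn lines detector}(b); for a two-eigenvalue signature matrix these are equivalent bookkeeping devices.
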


The second part above is Corollary~6.7 in~\cite{CoutinhoCSZ:16}, whereas the first part answers an open problem posed at the end of Section~6 in the same paper.
In particular, our result implies that none of the regular abelian \textsc{drackn}s satisfying case (II.a) of Theorem~6.5 in~\cite{CoutinhoCSZ:16} exist.
Our proof of both parts of Corollary~\ref{cor.no sics} uses the same technique, namely, Corollary~\ref{cor.drackn lines detector}.

\begin{proof}[Proof of Corollary~\ref{cor.no sics}]
Given a maximal ETF, then the Gram matrix is $\mathcal{G}=I+(1/\sqrt{d+1})\mathcal{S}$ by equality in the Welch bound~\eqref{eq.welch bound}.
Furthermore, the eigenvalues of $\mathcal{G}$ are $0$ and $d$, and so the eigenvalues of $\mathcal{S}$ are given by
\[
\sigma(\mathcal{S})
=\Big\{-\sqrt{d+1},~(d-1)\sqrt{d+1}\Big\}.
\]
By Proposition~\ref{prop.cousin}, the eigenvalues of $\mathcal{G}^{\circ 2}=I+(1/(d+1))\mathcal{S}^{\circ 2}$ are $0$ and $2d/(d+1)$, and so
\[
\sigma(\mathcal{S}^{\circ 2})
=\Big\{-(d+1),~d-1\Big\}.
\]
We claim that $\mathcal{S}$ and $-\mathcal{S}$ (and therefore their normalized versions) fail to satisfy Corollary~\ref{cor.drackn lines detector}(b) with one exception.
Indeed, the positive eigenvalues of $\mathcal{S}$ and $\mathcal{S}^{\circ 2}$ are equal only if $d=0$, whereas the positive eigenvalues of $-\mathcal{S}$ and $(-\mathcal{S})^{\circ 2}=\mathcal{S}^{\circ 2}$ are equal only if $d\in\{0,3\}$.
\end{proof}

Overall, \textsc{drackn} lines are too restrictive to produce maximal ETFs beyond $d=3$.
However, roux lines appear to be a fruitful relaxation in this regard.
For example, Corollary~\ref{cor.2tran-roux-etf}(a) gives that all three of the doubly transitive maximal ETFs classified in~\cite{Zhu:15} (namely, those in $\mathbb{C}^2$, the Hesse ETF in $\mathbb{C}^3$~\cite{Zauner:99}, and Hoggar's lines in $\mathbb{C}^8$~\cite{Hoggar:98}) span roux lines.
The following result provides another indication that roux lines may interact nicely with maximal ETFs:

\begin{corollary}
\label{cor.max ETF C4 roux}
Every maximal equiangular tight frame whose signature matrix consists of $4$th roots of unity is roux.
\end{corollary}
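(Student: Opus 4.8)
The plan is to verify the two hypotheses of the roux lines detector, Corollary~\ref{cor.roux lines detector}, applied to the signature matrix $\mathcal{S}$ of a maximal ETF $\{\varphi_i\}_{i\in[d^2]}$ for $\mathbb{C}^d$ (we may assume $d\geq 2$, since $d=1$ is trivial, and then $n=d^2>d$, so the lines are linearly dependent). First I would normalize the first row and column, which conjugates $\mathcal{S}$ by a diagonal unitary $D$ whose diagonal entries lie in $C_4$; since $\mathcal{S}$ has off-diagonal entries in $C_4$, so does $\bar{\mathcal{S}}=D\mathcal{S}D^{-1}$, and this is hypothesis (a).

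For hypothesis (b), I would use that $\bar{\mathcal{S}}^{\circ k}=D^k\mathcal{S}^{\circ k}(D^k)^{-1}$ has the same spectrum as $\mathcal{S}^{\circ k}$, and that $\mathcal{S}^{\circ k}$ depends only on $k$ modulo $4$ because the off-diagonal entries of $\mathcal{S}$ are fourth roots of unity and its diagonal vanishes. So only $k\in\{1,2,3,4\}$ need to be checked. The case $k=1$ is immediate: $\mathcal{S}$ is an ETF signature matrix and hence has exactly two eigenvalues. For $k=3$, the identity $z^3=\bar z$ for $z\in C_4$ together with $\mathcal{S}=\mathcal{S}^*$ gives $\mathcal{S}^{\circ 3}=\overline{\mathcal{S}}=\mathcal{S}^\top$, which shares the spectrum of $\mathcal{S}$; and $\mathcal{S}^{\circ 4}=J-I$ has eigenvalues $d^2-1$ and $-1$. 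The substantive case is $k=2$: here I would invoke Proposition~\ref{prop.cousin}, which identifies $\mathcal{S}^{\circ 2}$ as the signature matrix of the ETF $\{\varphi_i^{\otimes 2}\}_{i\in[d^2]}$, so $\mathcal{S}^{\circ 2}$ also has exactly two eigenvalues. With all four cases in hand, hypothesis (b) holds, and Corollary~\ref{cor.roux lines detector} shows that the lines spanned by $\{\varphi_i\}_{i\in[d^2]}$ are roux.

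The whole argument is short once the detector and the tensor-square observation are available, and the only step carrying genuine content is the case $k\equiv 2\pmod 4$; this is exactly why maximality is assumed, since for a general ETF with $C_4$-valued signature matrix one has no handle on the spectrum of $\mathcal{S}^{\circ 2}$. I do not anticipate any real obstacle beyond correctly citing Proposition~\ref{prop.cousin} and checking the easy arithmetic of fourth roots of unity.
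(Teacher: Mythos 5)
Your proof is correct and follows essentially the same route as the paper: both verify the hypotheses of Corollary~\ref{cor.roux lines detector}, handling the second Hadamard power via Proposition~\ref{prop.cousin} and the third via entrywise conjugation. Your write-up is slightly more explicit than the paper's (reducing $k$ modulo $4$ and checking $k\equiv 0,1$ as well), but there is no substantive difference.
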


\begin{proof}
We will check (a) and (b) in Corollary~\ref{cor.roux lines detector}.
Since the signature matrix already consists of roots of unity, its normalized version will as well, and so we have (a).
For (b), note that the second Hadamard power has two eigenvalues by Proposition~\ref{prop.cousin}.
Also, the third Hadamard power is equivalent to applying the complex conjugate entrywise, which fixes the (real) minimal polynomial of the signature matrix.
\end{proof}

Explicit examples are given by~\eqref{eq.2by4 signature matrix}, and by Example~\ref{ex.maximal} below.

\subsection{Examples of doubly transitive lines}

We now describe every construction of doubly transitive lines that we currently know.

\begin{example}[Doubly transitive two-graphs]
Every two-graph (\cite{Seidel:76}) $\mathcal{T}$ on vertex set $[n]$ is known to create a sequence of equiangular lines $\mathscr{L} = \{ \ell_i \}_{i\in [n]}$ spanning $\mathbb{R}^d$ such that $\operatorname{Aut} \mathcal{T} = \operatorname{Aut} \mathscr{L}$, in the sense that every $\sigma \in S_n$ preserving the triple set $\mathcal{T}$ corresponds with an orthogonal matrix $U \in \operatorname{O}(d)$ satisfying $U \ell_i = \ell_{\sigma(i)}$ for every $i \in [n]$.
Considering the inclusions $\mathbb{R}^d \subset \mathbb{C}^d$ and $\operatorname{O}(d) \leq \operatorname{U}(d)$, every doubly transitive two-graph produces a sequence of doubly transitive lines spanning $\mathbb{C}^d$.
The doubly transitive two-graphs are all known~\cite{Taylor:92}, and their constructions are summarized in~\cite[\S 6]{Taylor:77}.
See~\cite[\S 2]{IversonM:future} for parameters of the resulting doubly transitive lines.
\end{example}

\begin{example}[Lines with $\operatorname{PSL}(2,q)$ symmetry]
Fix an odd prime power $q$, and write $\mathbb{F}_q$ for the finite field of order $q$, and $\mathbb{F}_q^\times$ for its group of units.
Let $\beta \colon \mathbb{F}_q^\times \to C_2$ be the linear character whose kernel consists of quadratic residues.
Put $G = {\operatorname{SL}(2,q) \times C_4}$ and
\[
H =
\left\{
\left( \begin{bmatrix} a & b \\ 0 & a^{-1} \end{bmatrix}, \beta(a) \right)
:
a \in \mathbb{F}_q^\times, b \in \mathbb{F}_q
\right\}
\leq
G.
\]
In the sequel to this paper~\cite{IversonM:future}, we prove that $(G,H)$ is a Higman pair with $N_G(H)/H \cong C_4$ and $n = [G : N_G(H)] = q+1$.
For an appropriate choice of key, Lemma~\ref{lem.higman's roux} creates a roux $B \in \mathbb{C}[C_4]^{n\times n}$ having parameters $c_1 = c_{-1} = (q-1)/2$ and $c_{\mathrm{i}} = c_{-\mathrm{i}} = 0$ when $q \equiv 1 \pmod 4$, while $c_1 = c_{-1} = 0$ and $c_{\mathrm{i}} = c_{-\mathrm{i}} = (q-1)/2$ when $q \equiv 3 \pmod 4$.
In either case, when we take $\alpha \in \widehat{C_4}$ to be the identity character we obtain the signature matrix $\mathcal{S} = \hat{\alpha}(B)$ of $q+1$ doubly transitive lines spanning $\mathbb{C}^{(q+1)/2}$.

Corollary~\ref{cor.real line detector} implies that $\mathcal{S}$ is the signature matrix of real lines if and only if $q \equiv 1 \pmod 4$.
When $q \equiv 3 \pmod 4$, $\mathcal{S}^{\circ 2} = \widehat{\alpha^2}(B)$ is the signature matrix of lines spanning $\mathbb{C}^q$, by Theorem~\ref{thm.roux primitive idempotents} and Lemma~\ref{lem.two reps of same lines}.
By Corollary~\ref{cor.drackn lines detector} we conclude that $\mathcal{S}$ is not the signature matrix of \textsc{drackn} lines whenever $q \equiv 3 \pmod 4$.
This gives an infinite family of roux lines that are not \textsc{drackn} lines.
\end{example}

\begin{example}[Lines with $\operatorname{PSU}(3,q)$ symmetry]
Fix a prime power $q > 2$.
We take $\operatorname{SU}(3,q)$ to consist of all matrices in $\operatorname{SL}(3,q^2)$ that stabilize the Hermitian form $(u,v) = u_1 v_3^q + u_2 v_2^q + u_3v_1^q$ on $\mathbb{F}_{q^2}^3$.
Let $\beta \colon \mathbb{F}_{q^2}^\times \to \mathbb{T}$ be any choice of nontrivial linear character such that $\operatorname{im} \beta =: C_{r} \leq C_{q+1}$.
Put $G = {\operatorname{SU}(3,q) \times C_{2r}}$ and
\[
H =
\left\{
\left(
\begin{bmatrix}
e & ea & eb \\
0 & e^{q-1} & - e^{q-1} a^q \\
0 & 0 & e^{-q}
\end{bmatrix},
\beta(e)
\right)
: e \in \mathbb{F}_{q^2}^\times, \,a,b \in \mathbb{F}_{q^2},\, a^{q+1} + b + b^q = 0
\right\}
\leq G.
\]
In the sequel paper~\cite{IversonM:future}, we prove that $(G,H)$ is a Higman pair with $n = [G : N_G(H)] = q^3 + 1$ and $N_G(H) / H \cong C_{2r}$.
After selecting an appropriate key, one can apply Lemma~\ref{lem.higman's roux} to obtain a roux $B \in \mathbb{C}[C_{2r}]^{n\times n}$ having parameters $\{ c_g \}_{g \in C_{2r}}$ given by
\[ c_g = \begin{cases}
\frac{q+1}{r}(q^2-1) + q - q^2, & \text{if }g=1; \\
\frac{q+1}{r}(q^2 - 1), & \text{if } g\in C_{r} \setminus \{1\}; \\
0, & \text{otherwise}.
\end{cases} \]
By Lemma~\ref{lem.inverse prob}, $B$ is switching equivalent to a roux $\tilde{B} \in \mathbb{C}[C_r]^{n\times n}$ having parameters
\[
c_g = \begin{cases}
\frac{q+1}{r}(q^2-1) + q - q^2, & \text{if }g=1; \\
\frac{q+1}{r}(q^2 - 1), & \text{if } g\in C_{r} \setminus \{1\}.
\end{cases}
\]
Theorem~\ref{thm.drackn vs roux} gives that $\lceil \tilde{B} \rfloor$ is the adjacency of a regular abelian $(q^3+1,r,(q-1)(q+1)^2/r)$-\textsc{drackn}.
(See~\cite{FickusJMPW:17} for regular abelian \textsc{drackn}s with these parameters.)
When $\alpha \in \widehat{C_r}$ is any nontrivial character, $\hat{\alpha}(\tilde{B})$ is the signature matrix of $q^3+1$ doubly transitive \textsc{drackn} lines spanning $\mathbb{C}^{q^2-q+1}$.
\end{example}

\begin{example}[Thas--Somma \textsc{drackn}s]
\label{ex.thas somma drackn}
The following example has been rediscovered several times in different forms~\cite{Thas:77,Somma:83,Higman:90,Cameron:91,GodsilH:92,IversonJM:16,BodmannKing:18}.
Fix a prime power $q$ and an integer $m \geq 1$.
Endow $V = \mathbb{F}_q^{2m}$ with a nondegenerate alternating bilinear form $[\cdot,\cdot] \colon V \times V \to \mathbb{F}_q$, and let $B \in \mathbb{C}[\mathbb{F}_q]^{V \times V}$ be given by $B_{uu} = 0$ and $B_{uv} = \delta_{[u,v]}$ for every $u \neq v \in V$, where $\delta_a \in \mathbb{C}[\mathbb{F}_q]$ denotes the basis vector corresponding to $a \in \mathbb{F}_q$.
Then $B$ is easily seen to satisfy (R1)--(R3), while a straightforward computation gives $B^2 = (q^{2m}-1)I + \sum_{a \in \mathbb{F}_q} c_a \delta_a B$, where
\[
c_a = \begin{cases}
q^{2m-1} - 2, & \text{if }a = 0; \\
q^{2m-1}, & \text{otherwise.}
\end{cases}
\]
By Lemma~\ref{lem.B squared}, $B$ is a roux for $\mathbb{F}_q$ with parameters $\{c_a \}_{a \in \mathbb{F}_q}$ given above.
Indeed, Theorem~\ref{thm.drackn vs roux} implies that $\lceil B \rfloor$ is the adjacency matrix of a regular abelian $(q^{2m},q,q^{2m-1})$-\textsc{drackn}, which is sometimes called the Thas--Somma construction.
For any nontrivial $\alpha \in \widehat{\mathbb{F}_q}$, $\mathcal{S} = \hat{\alpha}(B)$ is the signature matrix of a sequence $\mathscr{L}$ of $q^{2m}$ lines spanning $\mathbb{C}^{q(q^m+1)/2}$, by Theorem~\ref{thm.roux primitive idempotents} and Lemma~\ref{lem.two reps of same lines}.
When $q$ is an odd prime, alternative constructions of $\mathscr{L}$ appear as special cases of~\cite[Theorem~6.4]{IversonJM:16} and~\cite[Theorem~4.10]{BodmannKing:18}.
(While the latter constructions are not obviously the same as the one above, one can easily prove their equivalence by considering signature matrices.)
Notably, $\mathscr{L}$ can be chosen as a projective orbit for a unitary representation of a Heisenberg group whenever $q$ is an odd prime.

We now explain how $\mathscr{L}$ is doubly transitive.
Denote $\operatorname{Sp}(2m,q) \leq \operatorname{GL}(2m,q)$ for the group of matrices that stabilize the form $[ \cdot, \cdot ]$.
The group $G = V \rtimes \operatorname{Sp}(2m,q)$ permutes $V$ with the affine action $(u,M)\cdot v = Mv + u$.
For any ring $R$, $G$ acts on $R^{V \times V}$ from the left by permuting indices:
\[
[(u,M)\cdot A]_{v,w} = A_{(u,M)^{-1}\cdot v, (u,M)^{-1}\cdot w} = A_{M^{-1}(v-u), M^{-1}(w-u)}.
\]
Given $u \in V$, let us denote $D_u \in \mathbb{C}[\mathbb{F}_q]^{V\times V}$ for the diagonal matrix with $[D_u]_{vv} = \delta_{[v,u]}$ for every $v \in V$.
We also abbreviate $\hat{D}_u = \hat{\alpha}(D_u) \in \mathbb{C}^{V \times V}$.
Then one easily checks that $(u,M)\cdot B = D_u^{-1} B D_u$ for every $u \in V$ and $M \in \operatorname{Sp}(2m,q)$.
Consequently,
\[
(u,M) \cdot \mathcal{S} = \hat{\alpha}\bigl( (u,M)\cdot B \bigr) = \hat{D}_u^* \mathcal{S} \hat{D}_u.
\]
From this it follows that $G \leq \operatorname{Aut} \mathscr{L}$ (cf.\ Lemma~2.3 in \cite{ChienW:18}).
Since $G$ is known to act doubly transitively on $V$~\cite{Grove:02}, we conclude that $\mathscr{L}$ is doubly transitive.
\end{example}

\begin{example}[Hoggar's lines]
\label{ex.maximal}
Take $h\in L^2(\mathbb{Z}_2^3)$ defined by $h(0)=-1+2\mathrm{i}$ and $h(j)=1$ for $j\neq 0$ (as given in~\cite{JedwabW:15,SzymusiakS:16}), and let $T^a$ and $M^b$ denote translation and modulation operators over $L^2(\mathbb{Z}_2^3)$:
\[
(T^af)(x)=f(x+a),
\qquad
(M^bf)(x)=(-1)^{b\cdot x}f(x).
\]
Then $\{T^aM^bh\}_{a,b\in\mathbb{Z}_2^3}$ is a maximal ETF (namely, Hoggar's lines) in which every off-diagonal entry of its signature matrix $\mathcal{S}_\mathrm{H}$ lies in $C_4$.
By Corollary~\ref{cor.max ETF C4 roux}, the vectors in this ETF span roux lines.
To see this, for each $k$, consider the matrix $B_k\in\mathbb{C}[C_4]^{2^{2k}\times 2^{2k}}$ with indices in $(\mathbb{Z}_2^k)^2$ defined by
\[
(B_k)_{(a,b),(c,d)}
:=\left\{
\begin{array}{cl}
0&\text{if }(a,b)=(c,d);\\
\phantom{_-}\delta_{\mathrm{i}^{\textsc{gray}^{-1}(d\cdot(a+c),b\cdot(a+c))}}&\text{else if }a=c\text{ or }b=d;\\
\delta_{-\mathrm{i}^{\textsc{gray}^{-1}(d\cdot(a+c),b\cdot(a+c))}}&\text{otherwise}.
\end{array}
\right.
\]
(As usual, $\delta_g$ denotes the image of $g\in C_4$ in $\mathbb{C}[C_4]$.)
Here, $\textsc{gray}\colon\mathbb{Z}_4\to\mathbb{Z}_2^2$ maps $j\in\mathbb{Z}_4$ to the $j$th Gray codeword~\cite{Gray:47,CalderbankCKS:97}, i.e.,
\[
\textsc{gray}\colon
\quad
0\mapsto(0,0);
\quad
1\mapsto(0,1);
\quad
2\mapsto(1,1);
\quad
3\mapsto(1,0).
\]
Then $B_k$ is a roux for $C_4$ when $k\in\{1,3\}$.
In particular, evaluating $B_1$ at the character $\alpha$ defined by $\alpha(z)=z$ gives the signature matrix of a maximal ETF in $\mathbb{C}^2$, whereas doing the same for $B_3$ produces the signature matrix $\mathcal{S}_\mathrm{H}$ of Hoggar's lines.
The roux parameters of $B_1$ are $c_1=c_{-1}=0$ and $c_\mathrm{i}=c_{-\mathrm{i}}=1$ (cf.\ Lemma~\ref{lem.conference roux}), whereas the roux parameters of $B_3$ are
\[
c_1=24,
\qquad
c_\mathrm{i}=c_{-\mathrm{i}}=16,
\qquad
c_{-1}=6.
\]

Hoggar's lines are known to extend to an infinite family of maximal ETFs over a finite field~\cite{GreavesIJM:20}.
Interestingly, the roux parameters of $B_3$ generalize to an infinite family, leaving open the possibility of an infinite family of maximal complex ETFs that arise from roux lines.
In particular, for any positive integer $j$, consider the parameters
\[
c_1
=4j^4+12j^3+10j^2-2,
\qquad
c_\mathrm{i}=c_{-\mathrm{i}}
=4j^4+8j^3+4j^2,
\qquad
c_{-1}
=4j^4+4j^3-2j^2.
\]
($B_3$ exhibits these parameters with $j=1$.)
Summing these parameters (and adding $2$) gives $n=16j^2(j+1)^2$.
Take $\alpha\in\widehat{C_4}$ defined by $\alpha(z)=z$.
Then $d:=d_\alpha^+=4j(j+1)=\sqrt{n}$, meaning that for any roux with these parameters, evaluating at $\alpha$ produces the signature matrix of a maximal ETF in $\mathbb{C}^d$.
Furthermore, the integrality condition in Corollary~\ref{cor.roux lines integrality} is satisfied with $\sqrt{q}\in\mathbb{Z}$.
In addition, $d_{\alpha^2}^+=\binom{d+1}{2}$, matching the necessary condition in Proposition~\ref{prop.cousin}.
Finally, a real ETF of this size exists if and only if there exists a regular symmetric Hadamard matrix of constant diagonal, and such matrices necessarily exist whenever there is a Hadamard matrix of order $d=4j(j+1)$~\cite{FickusM:online}; the Hadamard conjecture implies that such a matrix exists for every $j$.
\end{example}

\section{Summary and remaining proofs}
\label{sec.summary}

As demonstrated by Corollary~\ref{cor.2tran-roux-etf}, roux arise naturally in the study of doubly transitive lines.
Indeed, the technology developed here plays a prominent role in the sequel paper~\cite{IversonM:future}, which classifies all doubly transitive lines having almost simple symmetries.
Roux simultaneously generalize doubly transitive lines, regular abelian \textsc{drackn}s, and regular two-graphs.
Mathematically, the study of roux lies in the intersection of group theory (roux proper), algebraic combinatorics (roux schemes), discrete geometry (roux lines), and graph theory (roux graphs); the theory favors a rich interplay between these perspectives. 
They have applications for equiangular lines and regular abelian \textsc{drackn}s, however we believe they are worthy of study in their own right.
The next steps are to find more constructions.
Example~\ref{ex.maximal} indicates one promising direction which, if fruitful, would give a combinatorial approach to Zauner's conjecture~\cite{Zauner:99}.
In addition, Table~\ref{table.drackns} points out many open problems regarding the existence of roux.

What follow are the proofs of the results reported in Subsection~\ref{subsec.example main results}.
The proof of Theorem~\ref{thm.Higman Pair Theorem}(a) appears in the sequel paper~\cite{IversonM:future}.

\begin{proof}[Proof of Theorem~\ref{thm.Higman Pair Theorem}(b)]
Given a Higman pair $(G,H)$, then the Schurian scheme of $(G,H)$ is isomorphic to a roux scheme by Lemma~\ref{lem.higman's roux}.
In particular, this scheme arises from an $n\times n$ roux for $\Gamma=K/H$, where $K=N_G(H)$.
Since the roux scheme's adjacency algebra is necessarily commutative, we may conclude that $(G,H)$ is a Gelfand pair.
Next, Theorem~\ref{thm.roux primitive idempotents} provides the primitive idempotents of the roux scheme, each of which is the Gram matrix of $r$ equal-norm representatives from each of $n$ lines, where the rank is strictly smaller than $n$ and the phase of each entry is an $r$th root of unity.
It remains to show that the automorphism group of the lines contains the action of $G$ on $G/K$.

To this end, fix some $\mathcal{G}=\mathcal{G}_\alpha^\epsilon$ with rank $d=d_\alpha^\epsilon$ from Theorem~\ref{thm.roux primitive idempotents}.
Since $\mathcal{G}$ lies in $\mathscr{A}(G,H)$, there exists a unitary representation $\pi\colon G\to \operatorname{U}(d)$ and a vector $v\in\mathbb{C}^d$ such that $\pi(h)v=v$ for every $h\in H$ and $\{\pi(x_ja_g)v\}_{j\in[n],g\in K/H}$ has Gram matrix $\mathcal{G}$ by Theorem~3.2 in~\cite{IversonJM:17} (here, we follow Lemma~\ref{lem.higman's roux} in selecting left coset representatives $\{x_j\}_{j\in[n]}$ for $K$ in $G$ and coset representatives $\{a_g\}_{g\in K/H}$ for $H$ in $K$).
By Theorem~\ref{thm.roux primitive idempotents}, the entries of $\mathcal{G}$ that have modulus $1$ appear in the $r\times r$ diagonal blocks; these are the entries $\mathcal{G}_{(i,g),(i,h)}$ for $i\in[n]$ and $g,h\in\Gamma=K/H$.
As such, we may conclude that $|\langle \pi(x_ia_g)v,\pi(x_ja_h)v\rangle|=1$ if and only if $i=j$, which in turn means $|\langle \pi(x)v,\pi(y)v\rangle|=1$ for $x,y\in G$ if and only if $x^{-1}y\in K$.

Now we consider how $G$ acts on the lines spanned by $\{\pi(x_ja_g)v\}_{j\in[n],g\in K/H}$ under the action $g\cdot[\pi(x)v]=[\pi(g)\pi(x)v]$.
This action is transitive, and furthermore, since $g\cdot[v]=[v]$ if and only if $|\langle v,\pi(g)v\rangle|=1$, if and only if $g\in K$, we have that $K$ is the stabilizer of $[v]$ under this action.
By the orbit--stabilizer theorem, this action is equivalent to the action of $G$ on $G/K$, which is doubly transitive by (H1).
\end{proof}

\begin{proof}[Proof of Corollary~\ref{cor.2tran-roux-etf}]
Part (a) of the Higman Pair Theorem (see~\cite{IversonM:future}) gives that doubly transitive lines arise from primitive idempotents of the corresponding Schurian scheme, which is a roux scheme by Theorem~\ref{thm.schurian roux}, and so the primitive idempotents produce roux lines by Lemma~\ref{lem.two reps of same lines}.
This gives (a), while (b) follows immediately from Theorem~\ref{thm.roux signature}.
\end{proof}

\section*{Acknowledgments}

The authors thank the anonymous referees for comments that substantially improved the quality of the paper.
Furthermore, one of the referees generously provided a preliminary version of Figure~\ref{figure.diagram}.
This work was partially supported by NSF DMS 1321779, NSF DMS 1829955, ARO W911NF-16-1-0008, AFOSR F4FGA06060J007, AFOSR FA9550-18-1-0107, AFOSR Young Investigator Research Program award F4FGA06088J001, an AFRL Summer Faculty Fellowship Program award, and the Simons Institute of the Theory of Computing
The authors thank Matt Fickus for ongoing, stimulating conversations, and in particular, for assisting in the preparation of Table~\ref{table.drackns}.
JWI thanks the Norbert Wiener Center and the Department of Mathematics, University of Maryland, College Park for their hospitality.
The views expressed in this article are those of the authors and do not reflect the official policy or position of the United States Air Force, Army, Department of Defense, the U.S.\ Government, Graham Higman, or Donald G.~Higman.

\end{document}